\numberwithin{equation}{section}
\numberwithin{figure}{section}
\theoremstyle{plain}
\newtheorem{thm}{\protect\theoremname}[section]
  \theoremstyle{definition}
  \newtheorem{defn}[thm]{\protect\definitionname}
  \theoremstyle{remark}
  \newtheorem{rem}[thm]{\protect\remarkname}
  \theoremstyle{plain}
  \newtheorem{cor}[thm]{\protect\corollaryname}
  \theoremstyle{plain}
  \newtheorem{lem}[thm]{\protect\lemmaname}
  \theoremstyle{plain}
  \newtheorem{prop}[thm]{\protect\propositionname}
  \theoremstyle{remark}
  \newtheorem*{rem*}{\protect\remarkname}
  \theoremstyle{plain}
  \newtheorem*{fact*}{\protect\factname}
  \theoremstyle{plain}
  \newtheorem*{lem*}{\protect\lemmaname}
  \providecommand{\corollaryname}{Corollary}
  \providecommand{\definitionname}{Definition}
  \providecommand{\factname}{Fact}
  \providecommand{\lemmaname}{Lemma}
  \providecommand{\propositionname}{Proposition}
  \providecommand{\remarkname}{Remark}
\providecommand{\theoremname}{Theorem}
\begin{document}
\global\long\def\pp#1{\frac{\partial}{\partial#1}}
\global\long\def\acc#1{\left\{  #1\right\}  }
\global\long\def\cro#1{\left[#1\right]}
\global\long\def\ppp#1#2{\frac{\partial#1}{\partial#2}}
\global\long\def\ddd#1#2{\frac{\mbox{d}#1}{\mbox{d}#2}}
\global\long\def\eps{\varepsilon}
\global\long\def\ww#1{\mathbb{#1}}
\global\long\def\norm#1{\left\Vert #1\right\Vert }
\global\long\def\abs#1{\left|#1\right|}
\global\long\def\cal#1{\mathcal{#1}}
\global\long\def\ps#1{\left\langle #1\right\rangle }
\global\long\def\crocro#1{\left\llbracket #1\right\rrbracket }
\global\long\def\form#1{\ww C\left\llbracket #1\right\rrbracket }
\global\long\def\tt#1{\mathtt{#1}}
\global\long\def\tx#1{\mathrm{#1}}
\newcommandx\fdiff[1][usedefault, addprefix=\global, 1=]{\tx{\widehat{Diff}}_{\tx{fib}}#1}
\global\long\def\diff#1{\widehat{\tx{Diff}}#1}
\newcommandx\sdiff[1][usedefault, addprefix=\global, 1=]{\tx{\widehat{Diff}}_{\omega}#1}
\global\long\def\sect#1#2#3{S\left(#1,#2,#3\right)}
\global\long\def\svf{\cal D_{\omega}}
\global\long\def\fvf{\cal D^{\left(1\right)}}
\newcommandx\fisot[2][usedefault, addprefix=\global, 1=\tx{fib}]{\widehat{\tx{Isot}}_{#1}\left(#2\right)}
\global\long\def\math#1{{\displaystyle #1}}
\global\long\def\pare#1{\Big({\displaystyle #1}\Big)}

\newcommandx\pain[1][usedefault, addprefix=\global, 1=j]{\left({\tt P_{#1}}\right)}
\global\long\def\sn{\widehat{\mathbb{\cal{SN}}}}
\global\long\def\sns{\widehat{\mathbb{\cal{SN}}}_{\omega}}
\global\long\def\snnd{\widehat{\mathbb{\cal{SN}}}_{\tx{nd}}}
\global\long\def\quotient#1#2{\bigslant{#1}{#2}}
\global\long\def\res#1{\tx{res}\left(#1\right)}

\newcommand{\bigslant}[2]{{\raisebox{.3em}{$#1$}\left/\raisebox{-.3em}{$#2$}\right.}}

\title{Doubly-resonant saddle-nodes in $\ww C^{3}$ and the fixed singularity
at infinity in the Painlevé equations: formal classification.}

\author{Amaury Bittmann%
\thanks{IRMA, Université de Strasbourg, 7 rue René Descartes, 67084 Strasbourg
Cedex, France\protect \\
bittmann@math.unistra.fr\protect \\
Office P-106: 03 68 85 0192%
}}
\maketitle
\begin{abstract}
In this work we consider formal singular vector fields in $\ww C^{3}$
with an isolated and doubly-resonant singularity of saddle-node type
at the origin. Such vector fields come from irregular two-dimensional
systems with two opposite non-zero eigenvalues, and appear for instance
when studying the irregular singularity at infinity in Painlevé equations
$\pain$, $j\in\acc{I,II,III,IV,V}$, for generic values of the parameters.
Under generic assumptions we give a complete formal classification
for the action of formal diffeomorphisms (by changes of coordinates)
fixing the origin and fibered in the independent variable $x$. We
also identify all formal isotropies (self-conjugacies) of the normal
forms. In the particular case where the flow preserves a transverse
symplectic structure, \emph{e.g.} for Painlevé equations, we prove
that the normalizing map can be chosen to preserve the transverse
symplectic form.
\end{abstract}
Keywords: Painlevé equations, singular vector fields, irregular singularity,
resonant singularity, normal form

\section{Introduction}

\subsection{Definition and main result}

~

We consider singular vector fields $Y$ in $\ww C^{3}$ which can
be written in appropriate coordinates $\left(x,\mathbf{y}\right):=\left(x,y_{1},y_{2}\right)$
as 
\begin{eqnarray}
Y & = & x^{2}\pp x+\Big(-\lambda y_{1}+F_{1}\left(x,\mathbf{y}\right)\Big)\pp{y_{1}}+\Big(\lambda y_{2}+F_{2}\left(x,\mathbf{y}\right)\Big)\pp{y_{2}}\,\,\,\,\,,\label{eq: intro}
\end{eqnarray}
where $\lambda\in\ww C^{*}$ and $F_{1},\, F_{2}$ are formal power
series of order at least two. They represent singular irregular $2$-dimensional
systems having two opposite non-zero eigenvalues and a vanishing third
eigenvalue. 

Our main motivation is the study of the irregular singularity at infinity
in Painlevé equations $\pain$, $j\in\acc{I,II,III,IV,V}$, for generic
values of the parameters~\cite{MR816829}. These equations, discovered
(mainly) by Paul Painlevé~\cite{MR1554937}, share the property that
the only movable singularities of their solutions are poles (the so-called
Painlevé property); this is the complete list of all such equations
up to changes of variables. They have been intensively studied since
the important work of Okamoto~\cite{MR0458459}. The study of fixed
singularities, and more particularly those at infinity, started to
be investigated by Boutroux with his famous \emph{tritronquées} solutions
\cite{MR1554937}. Recently, several authors provided more complete
information about such singularities, studying the so-called \emph{quasi-linear
Stokes phenomena} and also giving connection formulas (\cite{MR1854431},
\cite{MR1219497} and \cite{MR2101932}). However, to the best of
our knowledge there are no general analytic classification for this
kind of doubly-resonant saddle-nodes yet (using normal form theory). 

More precisely, we would like to understand the action of germs of
analytic diffeomorphisms on such vector fields by changes of coordinates.
If one tries to do this, a first step would be to provide a formal
classification, that is to study the action of formal changes of coordinates
on these vector fields. This is the aim of this paper. Based on the
usual strategy employed for the classification of resonant vector
fields \cite{MR672182} in dimension $2$, we give in a forthcoming
paper a complete analytic classification for a more specific class
of vector fields, by studying the \emph{non-linear Stokes phenomena}.

To state our main results we need to introduce some notations and
nomenclature. 
\begin{itemize}
\item $\form{\mathbf{x}}$ is the $\ww C$-algebra of formal power series
in the (multi)variable $\mathbf{x}=\left(x_{1},\dots,x_{n}\right)$
with coefficients in $\ww C$. We denote by $\mathfrak{m}$ its unique
maximal ideal: it is formed by formal series with null constant term.
For any formal series $f_{1},\dots,f_{m}$ in $\form{\mathbf{x}}$,
we denote by $\ps{f_{1},\dots,f_{m}}$ the ideal generated by these
elements. 
\item $\fvf$ is the Lie algebra of formal vector fields at the origin of
$\ww C^{3}$ which are singular (\emph{i.e.} vanish at the origin).
Any formal vector field in $\fvf$ can be written 
\[
Y={\displaystyle b\pp x+b_{1}\pp{y_{1}}+b_{2}\pp{y_{2}}}
\]
with $b,b_{1},b_{2}\in\mathfrak{m}$. 
\item $\diff{}$ is the group of formal diffeomorphisms fixing the origin
of $\ww C^{3}$. It acts on $\fvf$ by conjugacy: if $\left(\Phi,Y\right)\in\diff{}\times\fvf$,
\begin{equation}
\Phi_{*}\left(Y\right):=\left(\mbox{D}\Phi\cdot Y\right)\circ\Phi^{-1}\qquad,\label{eq: push forward intro}
\end{equation}
where $\mbox{D}\Phi$ is the Jacobian matrix of $\Phi$.
\item $\fdiff$ is the subgroup of $\diff{}$ of diffeomorphisms fibered
in the $x$-coordinate, \emph{i.e. }of the form $\left(x,\mathbf{y}\right)\mapsto\left(x,\phi\left(x,\mathbf{y}\right)\right)$.\end{itemize}
\begin{defn}
\label{def: drsn}A \textbf{doubly-resonant saddle-node} is a vector
field $Y\in\fvf$ which is $\fdiff$ -conjugate to one of the form
\begin{eqnarray*}
Y & = & x^{2}\pp x+\Big(-\lambda y_{1}+F_{1}\left(x,\mathbf{y}\right)\Big)\pp{y_{1}}+\Big(\lambda y_{2}+F_{2}\left(x,\mathbf{y}\right)\Big)\pp{y_{2}}\,\,\,\,\,,
\end{eqnarray*}
with $\lambda\in\ww C^{*}$ and $F_{1},F_{2}\in\mathfrak{m}^{2}$.
We will denote by $\sn$ the set of all such formal vector fields.
\end{defn}
By Taylor expansion up to order $1$ with respect to $\mathbf{y}$,
given a vector field $Y\in\sn$ written as above we can consider the
associated 2-dimensional differential system:
\begin{equation}
x^{2}\ddd{\mathbf{y}}x=\mathbf{\alpha}\left(x\right)+\mathbf{A}\left(x\right)\mathbf{y}\left(x\right)+\mathbf{f}\left(x,\mathbf{y}\left(x\right)\right)\qquad,\label{eq: system doubly resonant saddle node}
\end{equation}
where ${\displaystyle \mathbf{y}}=\left(y_{1},y_{2}\right)$, such
that the following conditions hold: 
\begin{itemize}
\item ${\displaystyle \alpha\left(x\right)=\left(\begin{array}{c}
\alpha_{1}\left(x\right)\\
\alpha_{2}\left(x\right)
\end{array}\right)},$ with ${\displaystyle \alpha_{1},\alpha_{2}\in\ps x^{2}\subset\form x}$
\item $\mathbf{A}\left(x\right)\in\mbox{Mat}_{2,2}\left(\form x\right)$
with $\mathbf{A}\left(0\right)=\tx{Diag}\left(-\lambda,\lambda\right)$,
$\lambda\in\ww C^{*}$
\item ${\displaystyle \mathbf{f}\left(x,\mathbf{y}\right)=\left(\begin{array}{c}
f_{1}\left(x,\mathbf{y}\right)\\
f_{2}\left(x,\mathbf{y}\right)
\end{array}\right)}$, with $f_{1},f_{2}\in\ps{y_{1},y_{2}}^{2}\subset\form{x,\mathbf{y}}$.
\end{itemize}
Based on this expression, we state:
\begin{defn}
\label{def: non-deg} The \textbf{residue} of $Y\in\sn$ is the complex
number
\[
{\displaystyle \res Y:=\left(\frac{\mbox{Tr}\left(\mathbf{A}\left(x\right)\right)}{x}\right)_{\mid x=0}}\,\,\,\,.
\]
We say that $Y$ is\textbf{ non-degenerate }if $\res Y\in\ww C\backslash\ww Q_{\leq0}$,
and we denote by $\snnd\subset\sn$ the subset of non-degenerate vector
fields.
\end{defn}
We will prove in subsection \ref{sub: non degenerate invariant well defined}
that the residue of $Y\in\sn$ is invariant under the action of $\fdiff$
by conjugacy. We can state now our first main result.
\begin{thm}
\label{Th: Th drsn}Let $Y\in\snnd$ be a non-degenerate doubly-resonant
saddle-node. Then there exists a fibered diffeomorphism $\Phi\in\fdiff$
such that: 
\begin{eqnarray}
\Phi_{*}\left(Y\right) & = & x^{2}\pp x+\left(-\lambda+a_{1}x+c_{1}\left(v\right)\right)y_{1}\pp{y_{1}}\nonumber \\
 &  & +\left(\lambda+a_{2}x+c_{2}\left(v\right)\right)y_{2}\pp{y_{2}}\,\,\,\,,\label{eq: fibered normal form-1}
\end{eqnarray}
where we put $v:=y_{1}y_{2}$. Here, $c_{1},c_{2}$ in $\ps v=v\form v$
are formal power series with null constant term and $a_{1},a_{2}\in\ww C$
are such that $a_{1}+a_{2}=\res Y$.\end{thm}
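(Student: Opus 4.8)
The plan is to normalize in two stages, exploiting throughout that a fibered diffeomorphism leaves the $\pp x$-component untouched: since the $x$-coordinate is fixed, $\mathrm{D}\Phi$ has first row $(1,0,0)$, so $\left(\Phi_{*}Y\right)_{x}=x^{2}$ exactly and every conjugacy below preserves the $x^{2}\pp x$ part while acting only on the two $\mathbf{y}$-components. First I would read off the semisimple resonances of the diagonal model $-\lambda y_{1}\pp{y_1}+\lambda y_{2}\pp{y_2}$: a monomial $x^{k}y_{1}^{p}y_{2}^{q}\pp{y_1}$ is resonant exactly when $p=q+1$, i.e. it has the shape $x^{k}y_{1}v^{q}$ (with $v=y_{1}y_{2}$), and symmetrically $x^{k}y_{2}v^{p}$ for the $\pp{y_2}$-component. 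Eliminating a non-resonant term amounts to inverting, in the corresponding $(p,q)$-sector, the operator $L_{\mu}(g)=x^{2}g'+\mu g$ on $\form x$ with $\mu=\lambda(q-p+1)\neq0$; its matrix in the basis $\acc{x^{k}}$ is triangular with diagonal $\mu\neq0$, hence a formal isomorphism, so a degree-by-degree elimination (using $\lambda\neq0$) brings $Y$ to the prepared form \[ Y=x^{2}\pp x+y_{1}A_{1}(x,v)\pp{y_1}+y_{2}A_{2}(x,v)\pp{y_2},\qquad A_{1},A_{2}\in\form{x,v}, \] with $A_{1}(0,0)=-\lambda$, $A_{2}(0,0)=\lambda$.

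Next I would restrict to conjugacies preserving this prepared shape, namely the infinitesimal generators $H=y_{1}h_{1}(x,v)\pp{y_1}+y_{2}h_{2}(x,v)\pp{y_2}$ with $h_{1},h_{2}\in\form{x,v}$. A direct computation of $\cro{Y,H}$, using $\pp{y_1}f(x,v)=y_{2}\ppp fv$ and $\pp{y_2}f(x,v)=y_{1}\ppp fv$, shows that, writing $T:=A_{1}+A_{2}$, the linearized action of such an $H$ changes $A_{1}$ by \[ \delta A_{1}=x^{2}\ppp{h_1}{x}+vT\,\ppp{h_1}{v}-v(h_{1}+h_{2})\ppp{A_1}{v}, \] and symmetrically for $A_{2}$. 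The only contributions that do not strictly raise the $v$-degree are $x^{2}\ppp{h_1}{x}$ and the $(1,0)$-part of $vT\ppp{h_1}{v}$, which uses the coefficient of $x$ in $T(x,0)$; by the invariance of the residue (subsection~\ref{sub: non degenerate invariant well defined}) that coefficient equals $\res Y$. Hence, grading by $v$-degree and extracting the coefficient of $v^{q}$, the leading homological operator in each sector is \[ L_{q}:\form x\to\form x,\qquad L_{q}(g)=x^{2}g'+\res Y\,q\,x\,g. \]

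The crux is the cokernel of $L_{q}$, and this is exactly where non-degeneracy enters. Writing $g=\sum_{k}g_{k}x^{k}$, the coefficient of $x^{j}$ in $L_{q}(g)$ is $\big((j-1)+\res Y\,q\big)g_{j-1}$ for $j\geq1$. For $q=0$ the divisor $(j-1)$ vanishes only at $j=1$, so $L_{0}$ has image $x^{2}\form x$ and cokernel $\ww C\oplus\ww Cx$; for $q\geq1$ the divisor $(j-1)+\res Y\,q$ vanishes iff $\res Y=-(j-1)/q$, and as $(j,q)$ range over $j\geq1,\,q\geq1$ these values sweep out precisely $\ww Q_{\leq0}$. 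Thus $\res Y\in\ww C\setminus\ww Q_{\leq0}$ is the exact condition guaranteeing that $L_{q}$ is injective with image $x\form x$ for every $q\geq1$, so its cokernel is just $\ww C$. Consequently the surviving part of $A_{1}$ is $-\lambda+a_{1}x$ at $v$-degree $0$ together with one constant $c_{1,q}$ at each $v$-degree $q\geq1$, i.e. $A_{1}=-\lambda+a_{1}x+c_{1}(v)$ with $c_{1}\in v\form v$, and likewise for $A_{2}$.

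It remains to convert this linear cokernel computation into an honest formal conjugacy. I would run an induction on the weighted degree assigning weight $1$ to $x$ and a large weight to $v$: at each degree the forbidden monomials of $A_{1},A_{2}$ are removed by solving $L_{q}(h_{i}^{(q)})=\big(\text{current forbidden part}\big)$, while the neglected contributions — the higher part of $vT\ppp{h_i}{v}$, the coupling $-v(h_{1}+h_{2})\ppp{A_i}{v}$, and the nonlinear tail of the finite conjugacy — all strictly raise the $v$-degree and so fall into later stages; the composite diffeomorphism converges in the Krull topology. Finally, since the linear-in-$\mathbf{y}$ part of the normal form is $\tx{Diag}\left(A_{1}(x,0),A_{2}(x,0)\right)$, its trace is $T(x,0)=(a_{1}+a_{2})x$ and $\res Y=\partial_{x}T(0,0)=a_{1}+a_{2}$, as claimed. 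The main obstacle, and the only place genuine arithmetic is used, is this divisor analysis of $L_{q}$: everything hinges on the small divisors $(j-1)+\res Y\,q$ never vanishing, which is equivalent to non-degeneracy and is precisely what lets us kill the mixed monomials $x^{k}v^{q}$ ($k\geq1,\,q\geq1$) that the semisimple analysis alone leaves behind.
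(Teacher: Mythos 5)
Your proof is correct and follows essentially the same route as the paper's: a two-stage Poincar\'e--Dulac elimination (first the non-resonant monomials, killed by the nonzero eigenvalues $\lambda\left(q-p+1\right)$; then the resonant monomials $x^{j}v^{q}$ with $j\geq1$, $\left(j,q\right)\neq\left(1,0\right)$, killed by the divisors $\left(j-1\right)+\res Y\, q$, which is exactly where non-degeneracy enters), with the normalizing map obtained as a Krull-convergent infinite composition of exponentials. The only difference is bookkeeping: the paper removes terms monomial-by-monomial via exponentials of monomial vector fields and order estimates on nested brackets, whereas you invert the full homological operators $g\mapsto x^{2}g'+\mu g$ and $g\mapsto x^{2}g'+\res Y\, qxg$ on $\form x$ sector-by-sector and identify their cokernels --- a cleaner packaging (which also makes the sharpness of the condition $\res Y\notin\ww Q_{\leq0}$ transparent), but not a different argument.
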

\begin{rem}
We will see in Corollary \ref{cor: uniqueness tangent to id} and
Proposition \ref{prop: fibered isotropies} that $\Phi$ as above
is essentially unique (that is, unique up to pre-composition by linear
transforms).\end{rem}
\begin{defn}
The \textbf{parameter space }for $\snnd$ is the set 
\begin{eqnarray*}
\cal P & := & \acc{\mathbf{p}=\left(\lambda,a_{1},a_{2},c_{1},c_{2}\right)\in\ww C^{*}\times\left(\ww C^{2}\backslash\Delta\right)\times\left(v\form v\right)^{2}}
\end{eqnarray*}
where 
\begin{eqnarray*}
\Delta & = & \acc{\left(a_{1},a_{2}\right)\in\ww C^{2}\mid a_{1}+a_{2}\in\ww Q_{\leq0}}
\end{eqnarray*}
 is the locus of degeneracy. A vector field in the form $\left(\mbox{\ref{eq: fibered normal form-1}}\right)$
will be called a \textbf{normal form} of $\snnd$ with parameters
$\left(\lambda,a_{1},a_{2},c_{1},c_{2}\right)$ in $\cal P$.
\end{defn}
Let us consider the quotient space 
\[
\quotient{\cal P}{\left(\ww C^{*}\times\nicefrac{\ww Z}{2\ww Z}\right)}
\]
where the group ${\displaystyle \left(\ww C^{*}\times\nicefrac{\ww Z}{2\ww Z}\right)}$
acts on $\cal P$ as follows. Given $\mathbf{p}=\left(\lambda,a_{1},a_{2},c_{1},c_{2}\right)\in\cal P$,
$\theta\in\ww C^{*}$ and $\epsilon\in\nicefrac{\ww Z}{2\ww Z}$ we
define 
\begin{eqnarray*}
\theta\cdot\left(\lambda,a_{1},a_{2},c_{1},c_{2}\right) & = & \left(\lambda,a_{1},a_{2},c_{1}\circ\varphi_{\theta},c_{2}\circ\varphi_{\theta}\right)\\
\epsilon\cdot\left(\lambda,a_{1},a_{2},c_{1},c_{2}\right) & = & \begin{cases}
\left(\lambda,a_{1},a_{2},c_{1},c_{2}\right) & ,\,\mbox{if }\epsilon=0\\
\left(-\lambda,a_{2},a_{1},c_{2},c_{1}\right) & ,\,\mbox{if }\epsilon=1
\end{cases}\qquad,
\end{eqnarray*}
where $\varphi_{\theta}$ is the homothecy $v\mapsto\theta v$. If
two parameters $\mathbf{p},\mathbf{p}'\in\cal P$ are in the same
orbit for this action we write $\mathbf{p}\sim\mathbf{p}'$. Our second
main result shows the uniqueness of the normal forms up to this action.
\begin{thm}
\label{thm: uniqueness}Suppose $Z$ and $Z'$ are two normal forms
of $\snnd$ with respective parameters $\mathbf{p}=\left(\lambda,a_{1},a_{2},c_{1},c_{2}\right)\in\cal P$
and $\mathbf{p}'=\left(\lambda',a'_{1},a'_{2},c'_{1},c'_{2}\right)\in\cal P$.
Then $Z$ and $Z'$ are $\fdiff$-conjugate if and only $\mathbf{p}\sim\mathbf{p}'$.
\end{thm}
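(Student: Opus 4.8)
The plan is to prove the two implications separately. The \emph{sufficiency} ($\mathbf{p}\sim\mathbf{p}'\Rightarrow$ conjugate) should be elementary, realized by explicit \emph{linear} fibered diffeomorphisms, while the \emph{necessity} will rest on a linear-algebra normalization of the conjugating map followed by the essential uniqueness of the normalizing diffeomorphism announced in the Remark.

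\emph{Sufficiency.} First I would check that the two generators of $\ww C^{*}\times\nicefrac{\ww Z}{2\ww Z}$ act by conjugacy through linear elements of $\fdiff$. For $\theta\in\ww C^{*}$, the diagonal scaling $\Lambda_{\theta}\colon\left(x,y_{1},y_{2}\right)\mapsto\left(x,\theta^{-1}y_{1},y_{2}\right)$ sends $v=y_{1}y_{2}$ to $\theta^{-1}v$; a direct computation of the push-forward $\left(\Lambda_{\theta}\right)_{*}$ shows that it fixes $\lambda,a_{1},a_{2}$ and replaces each $c_{i}\left(v\right)$ by $c_{i}\left(\theta v\right)=\left(c_{i}\circ\varphi_{\theta}\right)\left(v\right)$, which is exactly the $\theta$-action. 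Likewise the transposition $\sigma\colon\left(x,y_{1},y_{2}\right)\mapsto\left(x,y_{2},y_{1}\right)$ fixes $v$ and exchanges the two $\mathbf{y}$-coefficients, carrying $\left(-\lambda+a_{1}x+c_{1}\right)y_{1}\pp{y_{1}}+\left(\lambda+a_{2}x+c_{2}\right)y_{2}\pp{y_{2}}$ to the normal form with parameters $\left(-\lambda,a_{2},a_{1},c_{2},c_{1}\right)$; this is the $\epsilon=1$ action. Composing these realizes the whole group, so equivalent parameters give $\fdiff$-conjugate normal forms.

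\emph{Necessity.} Suppose $\Psi\in\fdiff$ satisfies $\Psi_{*}Z=Z'$. Since $\Psi$ fixes the origin and $Z,Z'$ vanish there, the linear part transforms by conjugation: $\tx{D}Z'\left(0\right)=\tx{D}\Psi\left(0\right)\,\tx{D}Z\left(0\right)\,\tx{D}\Psi\left(0\right)^{-1}$, where $\tx{D}Z\left(0\right)=\tx{diag}\left(0,-\lambda,\lambda\right)$ and $\tx{D}Z'\left(0\right)=\tx{diag}\left(0,-\lambda',\lambda'\right)$. Comparing spectra forces $\acc{-\lambda,\lambda}=\acc{-\lambda',\lambda'}$, hence $\lambda'=\pm\lambda$; after possibly pre-composing $\Psi$ with $\sigma$ I may assume $\lambda'=\lambda$. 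Then $\tx{D}\Psi\left(0\right)$ commutes with $\tx{diag}\left(0,-\lambda,\lambda\right)$, whose eigenvalues $0,-\lambda,\lambda$ are pairwise distinct because $\lambda\neq0$; therefore $\tx{D}\Psi\left(0\right)$ is diagonal, and the fibered constraint makes it a scaling $\Lambda\colon\left(x,y_{1},y_{2}\right)\mapsto\left(x,m_{1}y_{1},m_{2}y_{2}\right)$.

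Now I would set $\tilde{\Psi}:=\Lambda^{-1}\circ\Psi$, which is tangent to the identity and satisfies $\tilde{\Psi}_{*}Z=\left(\Lambda^{-1}\right)_{*}Z'=:Z''$. By the sufficiency computation, $Z''$ is again a normal form whose parameters differ from those of $Z'$ only by the $\ww C^{*}$-action (same $\lambda$, same $a_{1},a_{2}$). At this point $\tilde{\Psi}$ is a \emph{tangent-to-identity} fibered conjugacy between the two normal forms $Z$ and $Z''$, both lying in $\snnd$; invoking the uniqueness of the normalizing map tangent to the identity (Corollary \ref{cor: uniqueness tangent to id} together with the description of the isotropies in Proposition \ref{prop: fibered isotropies}) forces $Z=Z''$, and in particular $a_{1}=a_{1}'$, $a_{2}=a_{2}'$ and $c_{i}=c_{i}'\circ\varphi_{\theta}$ for the relevant $\theta$. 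Tracing back the scaling and the possible transposition then yields $\mathbf{p}\sim\mathbf{p}'$, as required. The invariance of $\res Z=a_{1}+a_{2}$ under $\fdiff$ serves as a consistency check but is subsumed by this stronger conclusion. The main obstacle is precisely the tangent-to-identity rigidity input: everything else is linear algebra, and the real work lies in that cited statement, where one solves the conjugacy equation order by order and uses the non-degeneracy hypothesis $\res Z\in\ww C\backslash\ww Q_{\leq0}$ to invert the homological operators governing each homogeneous component.
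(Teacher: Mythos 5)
Your sufficiency argument and the linear-part reduction (comparing spectra, pre-composing with the transposition $\sigma$, forcing $\tx D\Psi\left(0\right)$ to be diagonal, factoring out the scaling $\Lambda$) coincide with the opening steps of the paper's proof. But the necessity direction contains a genuine circularity: the tangent-to-identity rigidity you invoke --- Corollary \ref{cor: uniqueness tangent to id} together with Proposition \ref{prop: fibered isotropies} --- is not available as input, because in the paper both statements are \emph{deduced from} the proof of Theorem \ref{thm: uniqueness}. The subsection on fibered isotropies opens with ``Looking back at the uniqueness proof in the previous paragraph, we immediately obtain all formal fibered isotropies,'' and Corollary \ref{cor: uniqueness tangent to id} is then presented as an immediate consequence of that proposition. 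So the statement to which you defer the ``real work'' is logically downstream of the theorem being proved; your proposal reduces the theorem to itself and leaves its essential content --- that a tangent-to-identity fibered conjugacy between two normal forms must be the identity --- unproven.

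Moreover, the mechanism you sketch for that missing step (``invert the homological operators'') cannot work as stated: the discrepancy between two normal forms consists precisely of \emph{resonant} terms $x^{k_{0}}\left(y_{1}y_{2}\right)^{k}S\left(0,\mu_{1},\mu_{2}\right)$, which lie in the kernel of $\tx{ad}_{S\left(0,-\lambda,\lambda\right)}$, so the homological operator attached to the semisimple part is not invertible on them. What the paper actually does is write the tangent-to-identity conjugacy as $\Psi=\exp\left(G\right)$, use Proposition \ref{prop: conjuation with resonant monomial} (uniqueness of the Jordan decomposition) to force $\cro{S\left(0,-\lambda,\lambda\right),G}=0$, hence $G=A\left(x,v\right)S\left(0,-1,1\right)+B\left(x,v\right)S\left(0,a_{1},a_{2}\right)$ with $v=y_{1}y_{2}$; it then passes to the two-variable ``chart'' $\left(x,v\right)$ and runs an induction on homogeneous degrees in which non-degeneracy enters through the nonvanishing factors $i+a\left(j+1\right)\neq0$ (where $a=a_{1}+a_{2}\notin\ww Q_{\leq0}$) multiplying the coefficients $B_{i,j+1}$, followed by a second induction killing $A$ and identifying $c$ with $\overline{c}$. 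This term-by-term argument --- whose by-product is exactly the isotropy group $\acc{\tx{diag}\left(1,\theta_{1},\theta_{2}\right)}$ with $c_{i}\left(\theta_{1}\theta_{2}v\right)=c_{i}\left(v\right)$ that you wanted to cite --- is the heart of the theorem and is absent from your proposal.
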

One can rephrase the above results in terms of group actions as follows.
\begin{cor}
There exists a bijection 
\begin{eqnarray*}
{\displaystyle \quotient{\snnd}{\fdiff}} & \simeq & \quotient{\cal P}{\left(\ww C^{*}\times\nicefrac{\ww Z}{2\ww Z}\right)}\,\,\,\,\,\,\,\,,
\end{eqnarray*}
where $\fdiff$ acts on $\snnd$ by conjugacy.
\end{cor}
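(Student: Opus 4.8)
The plan is to build the claimed bijection directly out of Theorems \ref{Th: Th drsn} and \ref{thm: uniqueness}, the corollary then amounting to checking that a naturally defined map is well defined, injective and surjective. Write $G:=\ww C^{*}\times\nicefrac{\ww Z}{2\ww Z}$ for the group acting on $\cal P$, denote by $\cro Y$ the $\fdiff$-conjugacy class of $Y\in\snnd$ and by $\cro{\mathbf p}$ the $G$-orbit of $\mathbf p\in\cal P$. To a class $\cro Y$ I would attach an orbit as follows: by Theorem \ref{Th: Th drsn} there is $\Phi\in\fdiff$ conjugating $Y$ to a normal form with parameters $\mathbf p\in\cal P$, and I set $\Psi\left(\cro Y\right):=\cro{\mathbf p}$. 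The whole point of the corollary is that this assignment descends to a bijection of quotients $\quotient{\snnd}{\fdiff}\to\quotient{\cal P}{G}$.

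The content of well-definedness is exactly Theorem \ref{thm: uniqueness}. Suppose $Y$ and $Y'$ are $\fdiff$-conjugate and are normalized to normal forms $Z,Z'$ with respective parameters $\mathbf p,\mathbf p'$; then $Z$ and $Z'$ are $\fdiff$-conjugate by transitivity, so the implication ``$\fdiff$-conjugate $\Rightarrow\mathbf p\sim\mathbf p'$'' of Theorem \ref{thm: uniqueness} gives $\cro{\mathbf p}=\cro{\mathbf p'}$. Specializing to $Y=Y'$ shows the orbit does not depend on the chosen normalizing diffeomorphism, and the general conjugate case shows the orbit is constant on $\fdiff$-classes; hence $\Psi$ is well defined. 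Reading the argument in the other direction yields injectivity: if $\Psi\left(\cro Y\right)=\Psi\left(\cro{Y'}\right)$ then $\mathbf p\sim\mathbf p'$, so the converse implication of Theorem \ref{thm: uniqueness} makes $Z$ and $Z'$ $\fdiff$-conjugate, and chaining with the conjugacies $Y$ to $Z$ and $Y'$ to $Z'$ gives $\cro Y=\cro{Y'}$.

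For surjectivity I must check that each orbit is genuinely attained, which forces me to verify that a normal form really is an element of $\snnd$. Given $\mathbf p=\left(\lambda,a_{1},a_{2},c_{1},c_{2}\right)\in\cal P$, the associated normal form $Z$ of $\left(\ref{eq: fibered normal form-1}\right)$ is of the shape $\left(\ref{eq: intro}\right)$ with $F_{1}=\left(a_{1}x+c_{1}(v)\right)y_{1}$ and $F_{2}=\left(a_{2}x+c_{2}(v)\right)y_{2}$, where $v=y_{1}y_{2}$. Since $c_{1},c_{2}\in v\form v$ have null constant term, each $c_{i}(v)y_{i}$ has order at least $3$ while $a_{i}xy_{i}$ has order $2$, so $F_{1},F_{2}\in\mathfrak m^{2}$ and $Z\in\sn$; moreover the linear-in-$\mathbf y$ part gives $\mathbf A(x)=\tx{Diag}\left(-\lambda+a_{1}x,\lambda+a_{2}x\right)$, whence $\res Z=a_{1}+a_{2}$, and as $\left(a_{1},a_{2}\right)\notin\Delta$ this residue lies in $\ww C\backslash\ww Q_{\leq0}$, so $Z\in\snnd$. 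Then $\Psi\left(\cro Z\right)=\cro{\mathbf p}$, which proves surjectivity and completes the bijection. I do not expect a substantial obstacle here: the entire analytic difficulty is already absorbed into Theorems \ref{Th: Th drsn} and \ref{thm: uniqueness}, and the only points demanding (light) care are the well-definedness of $\Psi$ and the residue computation ensuring that surjectivity lands inside $\snnd$ rather than merely in $\sn$.
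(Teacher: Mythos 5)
Your proof is correct and follows exactly the route the paper intends: the corollary is stated there as an immediate rephrasing of Theorems \ref{Th: Th drsn} (existence, giving surjectivity once one checks normal forms with parameters in $\cal P$ lie in $\snnd$) and \ref{thm: uniqueness} (both implications, giving well-definedness and injectivity). Your explicit verification that a normal form has residue $a_{1}+a_{2}\notin\ww Q_{\leq0}$ and hence belongs to $\snnd$ is the only detail the paper leaves tacit, and you handle it correctly.
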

Let us make some remarks.
\begin{rem}
~
\begin{enumerate}
\item The condition of non-degeneracy is necessary to obtain such normal
forms. For instance for any $a_{1},a_{2}\in\ww C$ such that $a_{1}+a_{2}=-\frac{p}{q}\in\ww Q_{\leq0}$,
with $\left(p,q\right)\in\ww N\times\ww N^{*}$, the vector field
\[
Y=x^{2}\pp x+\left(-\lambda+a_{1}x+x^{p+1}\left(y_{1}y_{2}\right)^{q}\right)y_{1}\pp{y_{1}}+\left(\lambda+a_{2}x\right)y_{2}\pp{y_{2}}\,\,,
\]
with residue ${\displaystyle \res Y=-\frac{p}{q}}$ is not $\fdiff$-conjugate
to a normal form as in Theorem \ref{Th: Th drsn}. Indeed, the resonant
term $x^{p+1}\left(y_{1}y_{2}\right)^{q}$ cannot be eliminated by
the action of $\fdiff.$
\item Notice that the above two results are not immediate consequences of
Poincaré-Dulac normal form theory. In fact, the usual Poincaré-Dulac
normal form possibly contains several additional resonant terms of
the form $\left(x^{k}\left(y_{1}y_{2}\right)^{l}\right)_{k,l\in\ww N}$,
and is far from being unique.
\end{enumerate}
\end{rem}

\subsection{Painlevé equations and the transversally symplectic case}

In \cite{MR816829} Yoshida shows that a vector field in the class
$\snnd$ naturally appears after a suitable compactification (given
by the so-called Boutroux coordinates \cite{Boutroux1913}) of the
phase-space of Painlevé equations $\pain$, $j\in\acc{I,II,III,IV,V}$
(for generic values of the parameters). In these cases the vector
field presents an additional Hamiltonian structure that will interest
us. 

Let us illustrate these computations in the case of the first Painlevé
equation: 
\begin{eqnarray*}
\left(P_{I}\right)\,\,\,\,\,\,\,\,\,\,\,\,\,\,\,\,\,\,\,\,\,\ddd{^{2}z_{1}}{t^{2}} & = & 6z_{1}^{2}+t\,\,\,\,\,\,\,\,\,\,\,\,\,\,.
\end{eqnarray*}
As is well known since Okamoto \cite{MR581468}, $\left(P_{I}\right)$
can be seen as a non-autonomous Hamiltonian system 
\[
\begin{cases}
\ppp{z_{1}}t=-\ppp H{z_{2}}\\
\ppp{z_{2}}t=\ppp H{z_{1}}
\end{cases}
\]
with Hamiltonian 
\begin{eqnarray*}
H\left(t,z_{1},z_{2}\right) & := & 2z_{1}^{3}+tz_{1}-\frac{z_{2}^{2}}{2}.
\end{eqnarray*}
More precisely, if we consider the standard symplectic form $\omega_{st}:=dz_{1}\wedge dz_{2}$
and the vector field
\begin{eqnarray*}
Z & := & \pp t-\ppp H{z_{2}}\pp{z_{1}}+\ppp H{z_{1}}\pp{z_{2}}
\end{eqnarray*}
induced by $\left(P_{I}\right)$, then the Lie derivative 
\[
\cal L_{Z}\left(\omega_{st}\right)=\left(\ppp{^{2}H}{t\partial z_{1}}\mbox{d}z_{1}+\ppp{^{2}H}{t\partial z_{2}}\mbox{d}z_{2}\right)\wedge\mbox{d}t=\mbox{d}z_{1}\wedge\mbox{d}t
\]
belongs to the ideal $\ps{\tx dt}$ generated by $\tx dt$ in the
exterior algebra $\Omega^{*}\left(\ww C^{3}\right)$ of differential
forms in variables $\left(t,z_{1},z_{2}\right)$. Equivalently, for
any $t_{1},t_{2}\in\ww C$ the flow of $Z$ at time $\left(t_{2}-t_{1}\right)$
acts as a symplectomorphism between fibers $\acc{t=t_{1}}$ and $\acc{t=t_{2}}$. 

The weighted compactification given by the Boutroux coordinates~\cite{MR1509163}
(see also \cite{Chiba}) defines a chart near $\acc{t=\infty}$ as
follows: 
\[
\begin{cases}
z_{2}=y_{2}x^{-\frac{3}{5}}\\
z_{1}=y_{1}x^{-\frac{2}{5}}\\
t=x^{-\frac{4}{5}}
\end{cases}\,\,\,.
\]
In the coordinates $\left(x,y_{1},y_{2}\right)$, the vector field
$Z$ is transformed, up to a translation $y_{1}\leftarrow y_{1}+\zeta$
with $\zeta=\frac{i}{\sqrt{6}}$, into the vector field 
\begin{eqnarray}
\tilde{Z} & = & -\frac{5}{4x^{\frac{1}{5}}}Y\label{eq: P1 ham at infinity}
\end{eqnarray}
where
\begin{eqnarray}
Y & = & x^{2}\pp x+\left(-\frac{4}{5}y_{2}+\frac{2}{5}xy_{1}+\frac{2\zeta}{5}x\right)\pp{y_{1}}\nonumber \\
 &  & +\left(-\frac{24}{5}y_{1}^{2}-\frac{48\zeta}{5}y_{1}+\frac{3}{5}xy_{2}\right)\pp{y_{2}}\,\,\,.\label{eq: P1 at infinity}
\end{eqnarray}
We observe that $Y$ is a\emph{ non-degenerate doubly-resonant saddle-node}
$Y$ as in Definitions \ref{def: drsn} and \ref{def: non-deg} with
residue $\res Y=1$. Furthermore we have: 
\[
\begin{cases}
\tx dt & =-\frac{4}{5}5^{\frac{4}{5}}x^{-\frac{9}{5}}\tx dx\\
\tx dz_{1}\wedge\tx dz_{2} & =\frac{1}{x}\left(\tx dy_{1}\wedge\tx dy_{2}\right)+\frac{1}{5x^{2}}\left(2y_{1}\tx dy_{2}-3y_{2}\tx dy_{1}\right)\wedge\tx dx\\
 & \in\,\frac{1}{x}\left(\tx dy_{1}\wedge\tx dy_{2}\right)+\ps{\tx dx}
\end{cases}\,\,\,\,\,,
\]
where $\ps{\mbox{d}x}$ denotes the ideal generated by $\mbox{d}x$.
We finally obtain 
\[
\begin{cases}
{\displaystyle \cal L_{Y}\left(\frac{\tx dy_{1}\wedge\tx dy_{2}}{x}\right)}=\frac{1}{5x}\left(3y_{2}\mbox{d}y_{1}-\left(2\zeta+2y_{1}\right)\mbox{d}y_{2}\right)\wedge\mbox{d}x\\
\cal L_{Y}\left(\mbox{d}x\right)=2x\mbox{d}x
\end{cases}\quad.
\]
 Therefore, both $x{\displaystyle \cal L_{Y}\left(\frac{\tx dy_{1}\wedge\tx dy_{2}}{x}\right)}$
and $\cal L_{Y}\left(\mbox{d}x\right)$ are differential forms which
lie in the ideal $\ps{\tx dx}$. This motivates the following definition.
\begin{defn}
\label{def: intro}Consider the rational 1-form 
\begin{eqnarray*}
\omega & := & \frac{\mbox{d}y_{1}\wedge\mbox{d}y_{2}}{x}~.
\end{eqnarray*}
We say that a formal vector field $Y\in\fvf$ is \textbf{transversally
Hamiltonian }(with respect to $\omega$ and $\tx dx$) if 
\begin{eqnarray*}
\mathcal{L}_{Y}\left(\tx dx\right)\in\left\langle \tx dx\right\rangle  & \mbox{ and } & x\mathcal{L}_{Y}\left(\omega\right)\in\left\langle \tx dx\right\rangle \qquad.
\end{eqnarray*}
We say a formal diffeomorphism $\Phi\in\diff{}$ is \textbf{transversally
symplectic} (with respect to $\omega$ and $\tx dx$) if
\[
\Phi^{*}\left(x\right)=x\mbox{ and }x\Phi^{*}\left(\omega\right)\in\, x\omega+\ps{\tx dx}\qquad.
\]
 (Here $\Phi^{*}\left(\omega\right)$ denotes the pull-back of $\omega$
by $\Phi$.)

We denote respectively by $\svf$ and $\sdiff$ the sets of transversally
Hamiltonian vector fields and transversally symplectic diffeomorphisms. \end{defn}
\begin{rem}
~
\begin{itemize}
\item The flow of a transversally Hamiltonian $X$ defines a map between
fibers $\acc{x=x_{1}}$ and $\acc{x=x_{2}}$ which sends $\omega_{\mid x=x_{1}}$
onto $\omega_{\mid x=x_{2}}$, since 
\[
\left(\exp\left(X\right)\right)^{*}\left(\omega\right)\in\,\omega+\ps{\mbox{d}x}\,\,.
\]

\item By our definition, a transversally symplectic diffeomorphism $\Phi\in\sdiff$
is necessarily a fibered diffeomorphism. In other words: $\sdiff\subset\fdiff$.
\end{itemize}
\end{rem}
\begin{defn}
\label{def: nc dr th}A \textbf{transversally Hamiltonian doubly-resonant
saddle-node} is a vector field $Y\in\cal D_{\omega}$ which is $\sdiff$
-conjugate to one of the form 
\begin{eqnarray*}
Y & = & x^{2}\pp x+\Big(-\lambda y_{1}+F_{1}\left(x,\mathbf{y}\right)\Big)\pp{y_{1}}+\Big(\lambda y_{2}+F_{2}\left(x,\mathbf{y}\right)\Big)\pp{y_{2}}\,\,\,\,\,,
\end{eqnarray*}
with $\lambda\in\ww C^{*}$ and $F_{1},F_{2}\in\mathfrak{m}^{2}$.
We will denote by $\sns$ the set of all such formal vector fields.
\end{defn}
Notice that a transversally Hamiltonian doubly-resonant saddle-node
$Y\in\sns$ is necessarily non-degenerate since its residue is always
equal to $1$. In other words: $\sns\subset\snnd$.
\begin{thm}
\label{thm: Th ham}Let $Y\in\sns$ be a transversally Hamiltonian
doubly-resonant saddle-node. Then, there exists a transversally symplectic
diffeomorphism $\Phi\in\sdiff$ such that: 
\begin{eqnarray}
\Phi_{*}\left(Y\right)= & x^{2}\pp x+\left(-\lambda+a_{1}x-c\left(v\right)\right)y_{1}\pp{y_{1}}+\left(\lambda+a_{2}x+c\left(v\right)\right)y_{2}\pp{y_{2}}\,\,\,\,.\label{eq: fibered normal form-1-1}
\end{eqnarray}
where we put $v:=y_{1}y_{2}$. Here, $c\left(v\right)$ in $v\form v$
is a formal power series with null constant term and $a_{1},a_{2}\in\ww C$
are such that $a_{1}+a_{2}=\res Y=1$. Furthermore this normal form
is unique with respect to the action of $\sdiff$.
\end{thm}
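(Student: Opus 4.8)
The plan is to proceed in two stages: first reduce $Y$ to the general (non-symplectic) normal form of Theorem~\ref{Th: Th drsn}, then upgrade the normalizing diffeomorphism to a transversally symplectic one and show that the transversal Hamiltonian constraint forces the special shape $\left(-c(v),+c(v)\right)$ of the coefficients. Since $\sns\subset\snnd$, Theorem~\ref{Th: Th drsn} already provides some $\Phi_{0}\in\fdiff$ with $\Phi_{0*}(Y)$ of the form $\left(-\lambda+a_{1}x+c_{1}(v)\right)y_{1}\pp{y_{1}}+\left(\lambda+a_{2}x+c_{2}(v)\right)y_{2}\pp{y_{2}}$ and $a_{1}+a_{2}=\res Y=1$. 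The heart of the matter is then to understand which such normal forms can arise from a transversally Hamiltonian $Y$, and to arrange that the conjugacy preserve $\omega=\frac{\mathrm{d}y_{1}\wedge\mathrm{d}y_{2}}{x}$ transversally.

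First I would record the infinitesimal meaning of the two conditions defining $\svf$. Writing a normal form $N=x^{2}\pp x+g_{1}(x,v)\,y_{1}\pp{y_{1}}+g_{2}(x,v)\,y_{2}\pp{y_{2}}$ with $g_{i}=-(-1)^{i}\lambda+a_{i}x+c_{i}(v)$, a direct computation of $x\cal L_{N}(\omega)$ shows that, modulo $\ps{\mathrm dx}$, the coefficient is governed by $g_{1}+g_{2}$ together with the $v$-derivatives $v\,\partial_{v}(g_{1}+g_{2})$ coming from $\cal L_{N}(\mathrm dy_{1}\wedge\mathrm dy_{2})=\big(\partial_{y_{1}}(g_{1}y_{1})+\partial_{y_{2}}(g_{2}y_{2})\big)\mathrm dy_{1}\wedge\mathrm dy_{2}$ plus $\mathrm dx$-terms. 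Imposing $x\cal L_{N}(\omega)\in\ps{\mathrm dx}$ forces the divergence-type quantity $g_{1}+g_{2}+v\,\partial_{v}(g_{1}+g_{2})$, i.e. $\partial_{v}\big(v(g_{1}+g_{2})\big)$, to be constant in $v$; since $a_{1}+a_{2}=1$ this pins down $c_{1}(v)+c_{2}(v)=0$, so $c_{2}=-c_{1}=:c$. This is exactly the claimed antisymmetric form, and it also shows $N\in\svf$ precisely when $c_{1}+c_{2}=0$.

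Next I would show the conjugacy can be taken in $\sdiff$. The idea is that the remaining freedom in $\Phi_{0}$ — the isotropies of the target normal form described in the Remark after Theorem~\ref{Th: Th drsn} and in Proposition~\ref{prop: fibered isotropies} — is rich enough to correct $\Phi_{0}$ into a transversally symplectic map. Concretely, if $N=\Phi_{0*}(Y)$ is a normal form and $Y\in\svf$, then $N\in\svf$ (transversal Hamiltonicity is preserved under $\fdiff$-conjugacy up to the $\ps{\mathrm dx}$-ideal, which I would verify by pulling back the defining relations), so by the previous paragraph $N$ already has the antisymmetric shape. It remains to replace $\Phi_{0}$ by $\Phi=\Psi\circ\Phi_{0}$ with $\Psi$ a fibered self-conjugacy of $N$ chosen so that $\Phi$ becomes transversally symplectic. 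I would set up the cohomological equation for $\Phi^{*}(x\omega)-x\omega\in\ps{\mathrm dx}$: writing $\Phi$ fibered and expanding, the obstruction to transversal symplecticity lies in a Jacobian-type quantity $\det(\mathrm D_{\mathbf y}\phi)$, and the constraint is that this Jacobian equal $1$ modulo the $x$-dependence absorbed into $\ps{\mathrm dx}$. The transversally symplectic isotropies act transitively enough on this discrepancy to kill it, by solving the relevant homological equations order by order — here the non-degeneracy $\res Y=1\notin\ww Q_{\le0}$ guarantees invertibility of the relevant operators, exactly as in the proof of Theorem~\ref{Th: Th drsn}.

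The main obstacle I anticipate is the last step: controlling the interplay between the fibered-conjugacy freedom and the symplectic constraint simultaneously. One must check that correcting $\Phi_{0}$ into $\sdiff$ does not destroy the normal form already achieved, i.e. that the correcting isotropy $\Psi$ lies in the intersection of the stabilizer of $N$ with $\sdiff$, and that this intersection is large enough to absorb the full symplectic defect. This amounts to solving a homological equation within the symplectic subgroup rather than the full $\fdiff$, and verifying that the image of the corresponding coboundary operator is exactly the space of defects that can occur. For uniqueness, I would argue that any two transversally symplectic normalizations differ by a transversally symplectic isotropy of the normal form; classifying these (again via Proposition~\ref{prop: fibered isotropies}, intersected with $\sdiff$) shows that the antisymmetric datum $\big(\lambda,a_{1},a_{2},c\big)$ is rigid, forcing $\theta=1$ in the homothecy action and hence genuine uniqueness, not merely up to the $\ww C^{*}$-rescaling present in the non-symplectic case.
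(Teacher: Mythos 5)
Your proposal gets the target statement right and the algebra in your second paragraph (that a normal form lies in $\svf$ if and only if $a_{1}+a_{2}=1$ and $c_{1}+c_{2}=0$) is correct, but the reduction rests on a claim that is false: transversal Hamiltonicity is \emph{not} preserved under arbitrary $\fdiff$-conjugacy. The paper is careful never to assert this; Lemma \ref{lem: push forward symplectic fibr=0000E9} proves invariance of $\svf$ only under conjugacy by $\sdiff$, and that restriction is essential. Indeed, for a fibered $\Phi=\left(x,\phi\left(x,\mathbf{y}\right)\right)$ and $Y\in\sns$ in prepared form (so $\cal L_{Y}\left(x\right)=x^{2}$), writing $J:=\det\left(\mathrm{D}_{\mathbf{y}}\phi\right)$ one has $\Phi^{*}\left(\omega\right)\in\frac{1}{x}J\,\tx dy_{1}\wedge\tx dy_{2}+\frac{1}{x}\ps{\tx dx}$, and Proposition \ref{prop: lien diffeo, chp vect et form} then yields
\begin{equation*}
x\,\cal L_{\Phi_{*}\left(Y\right)}\left(\omega\right)\in\left(\Phi^{-1}\right)^{*}\Big(\cal L_{Y}\left(J\right)\,\tx dy_{1}\wedge\tx dy_{2}\Big)+\ps{\tx dx}\qquad.
\end{equation*}
Hence $\Phi_{*}\left(Y\right)\in\svf$ if and only if $\cal L_{Y}\left(J\right)=0$, \emph{i.e.} $J$ is a first integral of $Y$; since a vector field in $\snnd$ admits only constant formal first integrals, only constant-Jacobian fibered maps conjugate $Y$ into $\svf$, and there is no reason for the $\Phi_{0}$ furnished by Theorem \ref{Th: Th drsn} to be of this kind. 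So you cannot conclude $N:=\Phi_{0*}\left(Y\right)\in\svf$, and your computation forcing $c_{1}+c_{2}=0$ has nothing to apply to. To fill this you must argue differently: either restrict to $\acc{x=0}$, where $Y_{0}$ is a genuine Hamiltonian vector field, and use orbital linearizability together with uniqueness (this is the paper's Proposition \ref{prop: trans ham implique asymp ham}); or do as the paper's actual proof, which establishes Theorems \ref{Th: Th drsn} and \ref{thm: Th ham} simultaneously: each elementary step $\exp\left(x^{i_{0}}y_{1}^{i_{1}}y_{2}^{i_{2}}S\left(0,\eta_{1},\eta_{2}\right)\right)$ removes a monomial whose coefficients, in the Hamiltonian case, satisfy $\eta_{1}\left(i_{1}+1\right)+\eta_{2}\left(i_{2}+1\right)=0$, so each step is transversally symplectic (Proposition \ref{prop: expo de champ transv hamilt}), the infinite composition stays in $\sdiff$ (Lemma \ref{lem: suite de diffeo symplectique}), invariance of $\svf$ legitimately applies at every stage, and the monomial characterization of $\svf$ forces the surviving resonant part to be $c\left(v\right)S\left(0,-1,1\right)$.

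The second gap is your correction step. You propose to repair $\Phi_{0}$ by post-composition with a fibered self-conjugacy $\Psi$ of $N$, ``solving homological equations order by order'' inside the stabilizer. But by the paper's own Proposition \ref{prop: fibered isotropies}, $\fisot N$ consists \emph{only} of the linear diagonal maps $\tx{diag}\left(1,\theta_{1},\theta_{2}\right)$ (with $\theta_{1}\theta_{2}$ a root of unity when $\left(c_{1},c_{2}\right)\neq\left(0,0\right)$): there are no nonlinear isotropies, hence no order-by-order freedom whatsoever. Post-composing with such a $\Psi$ multiplies $\det\left(\mathrm{D}_{\mathbf{y}}\phi_{0}\right)$ by the constant $\theta_{1}\theta_{2}$, so your scheme can succeed only if this determinant is already constant --- which is precisely what needs proving. (It is true, but only after the first gap is filled: once $c_{1}+c_{2}=0$ is known, $N\in\svf$, the displayed identity shows $\det\left(\mathrm{D}_{\mathbf{y}}\phi_{0}\right)$ is a first integral of $Y$, hence a constant $\kappa$, and post-composing with one diagonal map satisfying $\theta_{1}\theta_{2}=\kappa^{-1}$ --- which sends $N$ to another admissible normal form, though it need not be an isotropy of $N$ --- makes the composite transversally symplectic.) Your uniqueness argument, by contrast, is sound: by the proof of Theorem \ref{thm: uniqueness} any fibered conjugacy between normal forms is linear diagonal, and transversal symplecticity forces $\theta_{1}\theta_{2}=1$, killing the $\ww C^{*}$-rescaling and the $\nicefrac{\ww Z}{2\ww Z}$-swap (compare Proposition \ref{prop: isot symp}).
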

One can rephrase the theorem above in terms of group action.
\begin{cor}
There exists a bijection 
\begin{eqnarray*}
\quotient{\sns}{\sdiff} & \simeq & \ww C^{*}\times\acc{\left(a_{1},a_{2}\right)\in\ww C^{2}\mid a_{1}+a_{2}=1}\times v\form v\qquad.
\end{eqnarray*}
\end{cor}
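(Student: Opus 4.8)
The plan is to upgrade the fibered normalization of Theorem \ref{Th: Th drsn} so that the conjugating map stays transversally symplectic. The first ingredient I would isolate is the structural fact that $\sdiff$ acts on $\svf$: if $\Phi\in\sdiff$ and $Y\in\svf$ then $\Phi_{*}Y\in\svf$. This follows from $\Phi^{*}x=x$, from $\Phi^{*}\omega\in\omega+\tfrac{1}{x}\ps{\tx dx}$, and from the observation that $\ps{\tx dx}$ is $\cal L_{Y}$-invariant for $Y\in\svf$. I would also record that $\sdiff$ is generated by time-one flows of vector fields $W$ with $W(x)=0$ and $\tx{div}_{\mathbf{y}}W=0$, i.e. $W=X_{G}$ the Hamiltonian field of a fiber Hamiltonian $G$ for $\tx dy_{1}\wedge\tx dy_{2}$. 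Granting this, every symplectic conjugate of $Y\in\sns$ that is a normal form is automatically transversally Hamiltonian, so it suffices to determine the shape of a transversally Hamiltonian normal form and then to produce the normalizer inside $\sdiff$.

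For the shape, writing $Y=x^{2}\pp x+B_{1}\pp{y_{1}}+B_{2}\pp{y_{2}}$, the condition $\cal L_{Y}(\tx dx)\in\ps{\tx dx}$ is automatic, while a direct computation shows $x\cal L_{Y}(\omega)$ equals $\left(\partial_{y_{1}}B_{1}+\partial_{y_{2}}B_{2}-x\right)\tx dy_{1}\wedge\tx dy_{2}$ modulo $\ps{\tx dx}$. Hence $Y\in\svf$ if and only if the transverse divergence $\partial_{y_{1}}B_{1}+\partial_{y_{2}}B_{2}$ equals $x$. Imposing this on a normal form of Theorem \ref{Th: Th drsn} gives $(a_{1}+a_{2})x+(c_{1}+c_{2})+v(c_{1}'+c_{2}')=x$, which forces $a_{1}+a_{2}=1$ and $\frac{\tx d}{\tx dv}\big(v(c_{1}+c_{2})\big)=0$, i.e. $c_{1}+c_{2}=0$. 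Setting $c:=c_{2}=-c_{1}$ recovers exactly the form $\left(\ref{eq: fibered normal form-1-1}\right)$. Equivalently, such a field is the transversally Hamiltonian field attached to a fiber Hamiltonian $H=-\lambda v+\beta xv+C(v)$ with $C'=c_{1}$, and the sign relation $c_{1}=-c_{2}$ is just the fact that the diagonal Hamiltonian field $X_{C(v)}$ has opposite coefficients along $y_{1}\pp{y_{1}}$ and $y_{2}\pp{y_{2}}$.

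The crux is producing $\Phi\in\sdiff$, and this is where I expect the main obstacle. The idea is to rerun the normalization of Theorem \ref{Th: Th drsn} entirely within $\svf$: build $\Phi$ as a formally convergent composition of flows $\exp(X_{G})\in\sdiff$, so that the vector homological equation $[Y_{0},W]=\Theta$ reduces, via the Hamiltonian encoding, to the scalar equation $\acc{H_{0},G}+x^{2}\partial_{x}G=\theta$ with $H_{0}=-\lambda v$. The part $\acc{H_{0},\cdot}$ is diagonal with eigenvalue $\lambda(p-q)$ on $x^{k}y_{1}^{p}y_{2}^{q}$, so its kernel consists of the functions of $(x,v)$; on that kernel the remaining operator is $x^{2}\partial_{x}$, whose cokernel is spanned by $v^{l}$ and by $xv^{l}$. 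The genuinely delicate point --- already the reason Theorem \ref{Th: Th drsn} is not a mere Poincar\'e--Dulac statement --- is the elimination of the surviving resonant terms $xv^{l}$ for $l\ge2$ (keeping only the residue term $xv$); I would import precisely the non-Poincar\'e--Dulac reduction of Theorem \ref{Th: Th drsn}, checking that each of its steps can be realized by a transversally symplectic map, the non-degeneracy $\res Y=1\notin\ww Q_{\le0}$ guaranteeing the invertibility of the operators involved at every order. Once this is done, the surviving resonant Hamiltonian is $-\lambda v+\beta xv+C(v)$, whose field is of the form $\left(\ref{eq: fibered normal form-1-1}\right)$, and $\Phi\in\sdiff$ by construction.

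Uniqueness follows from Theorem \ref{thm: uniqueness} by restriction. Since $\sdiff\subset\fdiff$, two $\sdiff$-conjugate normal forms have parameters related by the $\ww C^{*}\times\nicefrac{\ww Z}{2\ww Z}$-action, and it remains to decide which of these symmetries are transversally symplectic. A diagonal linear map $y_{i}\mapsto\alpha_{i}y_{i}$ pulls $\omega$ back to $\alpha_{1}\alpha_{2}\,\omega$, hence is transversally symplectic only when $\alpha_{1}\alpha_{2}=1$; thus the homothety $v\mapsto\theta v$ is admissible only for $\theta=1$ and the continuous $\ww C^{*}$-ambiguity collapses, making $c$ itself an invariant. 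Using Proposition \ref{prop: fibered isotropies} to rule out any further isotropy and fixing the residual discrete symmetry by the labeling convention of the normal form, one obtains the asserted uniqueness, hence the bijection of the corollary.
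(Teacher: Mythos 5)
Your proposal follows, in substance, the same route as the paper: the paper obtains this corollary as a direct rephrasing of Theorem \ref{thm: Th ham}, whose proof consists precisely in rerunning the two-step normalization of Theorem \ref{Th: Th drsn} inside the symplectic category (each exponential is of a transversally Hamiltonian field with trivial $x$-component, hence transversally symplectic by Proposition \ref{prop: expo de champ transv hamilt}, with Lemmas \ref{lem: suite de diffeo symplectique} and \ref{lem: push forward symplectic fibr=0000E9} ensuring the limit stays in $\sdiff$ and the intermediate fields stay in $\svf$), and the uniqueness is obtained by restricting Theorem \ref{thm: uniqueness} to $\sdiff$. Your divergence computation (fiberwise divergence $=x$ forces $a_{1}+a_{2}=1$ and $c_{1}+c_{2}=0$) is a clean equivalent of the paper's monomial characterization of transversally Hamiltonian fields. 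Two bookkeeping inaccuracies in your existence part: the normal form is \emph{not} $x^{2}\pp x+X_{H}$ for a fiber Hamiltonian $H=-\lambda v+\beta xv+C(v)$ (that field would have $a_{1}+a_{2}=0$; the correct decomposition carries an extra Euler term $\frac{x}{2}\left(y_{1}\pp{y_{1}}+y_{2}\pp{y_{2}}\right)$ accounting for the divergence $x$), and correspondingly your scalar homological operator $\acc{H_{0},\cdot}+x^{2}\partial_{x}$ omits the bracket against the residue term $xS\left(1,a_{1},a_{2}\right)$, which is exactly the contribution that makes the resonant terms $xv^{l}$, $l\geq2$, removable (in the paper's step 2 it produces the eigenvalue $-\left(i_{0}+i\left(a_{1}+a_{2}\right)\right)$). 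Since you explicitly defer this point to the reduction of Theorem \ref{Th: Th drsn}, these are repairable, not wrong ideas.

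The genuine gap is in your last step, the injectivity of the claimed bijection. You correctly note that a diagonal conjugacy in $\sdiff$ must satisfy $\theta_{1}\theta_{2}=1$, which kills the continuous $\ww C^{*}$-ambiguity; but the residual $\nicefrac{\ww Z}{2\ww Z}$ cannot be ``fixed by the labeling convention''. The swap composed with a determinant-correcting diagonal map, e.g. $\sigma:\left(x,y_{1},y_{2}\right)\mapsto\left(x,y_{2},-y_{1}\right)$, satisfies $\sigma^{*}\left(\tx dy_{1}\wedge\tx dy_{2}\right)=\tx dy_{1}\wedge\tx dy_{2}$ and $\sigma^{*}\left(x\right)=x$, hence $\sigma\in\sdiff$; and a direct computation shows that $\sigma$ transforms the normal form with parameters $\left(\lambda,a_{1},a_{2},c\right)$ into the normal form with parameters $\left(-\lambda,a_{2},a_{1},\tilde{c}\right)$, $\tilde{c}\left(v\right)=-c\left(-v\right)$. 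These are always distinct points of $\ww C^{*}\times\acc{a_{1}+a_{2}=1}\times v\form v$, so this involution acts nontrivially on the parameter space and no convention internal to the shape of the normal form selects a representative. Strictly speaking, then, the natural map from parameters to $\sdiff$-classes is two-to-one rather than bijective, and the statement requires either quotienting the right-hand side by this involution or adding an external normalization of the sign of $\lambda$. To be fair, the paper's own uniqueness assertion in Theorem \ref{thm: Th ham} glosses over the same point (its proof of Theorem \ref{thm: uniqueness} treats the swap case only ``up to a linear change of coordinates beforehand''), so your proposal matches the paper's argument; but as written your final sentence is not a proof, and this is the one place where the argument genuinely does not close.
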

\begin{rem}
~
\begin{enumerate}
\item As for Theorem \ref{Th: Th drsn}, $\Phi$ is essentially unique (Corollary
\ref{cor: uniqueness tangent to id}). This is an immediate consequence
of Theorem \ref{thm: uniqueness}. However, the fact that the normalizing
diffeomorphism $\Phi$ in Theorem \ref{thm: Th ham} is transversally
symplectic is not an immediate consequence of Theorem \ref{thm: uniqueness}.
\item The above normalization theorem can be interpreted as defining local\emph{
action-angle coordinates} for vector fields in $\sns$. More precisely,
if we consider the successive symplectic changes of coordinates 
\[
{\displaystyle \begin{cases}
y_{1}=\frac{e^{i\frac{\pi}{4}}}{\sqrt{2}}\left(u_{1}+iu_{2}\right)\\
y_{2}=\frac{e^{i\frac{\pi}{4}}}{\sqrt{2}}\left(u_{1}-iu_{2}\right)
\end{cases}}
\]
and 
\[
{\displaystyle \begin{cases}
u_{1}=\sqrt{2\rho}\cos\varphi\\
u_{2}=\sqrt{2\rho}\sin\varphi
\end{cases}}\;,
\]
then the vector field $\left(\mbox{\ref{eq: fibered normal form-1-1}}\right)$
becomes: 
\[
x^{2}\pp x+e^{-i\frac{\pi}{4}}x\sqrt{\rho}\pp{\rho}+i\left(\lambda+c\left(i\rho\right)+\frac{\left(a_{2}-a_{1}\right)}{2}x\right)\pp{\varphi}\,\,\,\,.
\]
Notice that the corresponding differential equation can be explicitly
integrated by quadratures in terms of an anti-derivative of $c$.
\end{enumerate}
\end{rem}
We will explain in section 4 how to compute inductively any finite
jet of $c\left(v\right)$ in the case of the Painlevé equations (for
which $c\left(v\right)$ is a germ of an analytic function at the
origin). 
\begin{cor}
Let $Y$ be as in (\ref{eq: P1 at infinity}). Then $a_{1}=a_{2}=\frac{1}{2}$,
$\lambda=\frac{8\sqrt{3\zeta}}{5}=\frac{4\cdot2^{\frac{3}{4}}\cdot3^{\frac{1}{4}}}{5}e^{\frac{i\pi}{4}}$
and 
\begin{eqnarray*}
c\left(v\right) & = & 3v+\left(9+\frac{167\cdot2^{\frac{1}{4}}\cdot3^{\frac{3}{4}}}{96}e^{\frac{3i\pi}{4}}\right)v^{2}\\
 &  & +\left(16+\frac{31837\sqrt{6}}{6912}i+\frac{5}{2}\cdot2^{\frac{1}{4}}\cdot3^{\frac{1}{4}}\cdot e^{\frac{3i\pi}{4}}\right)v^{3}+O\left(v^{4}\right)~.
\end{eqnarray*}

\end{cor}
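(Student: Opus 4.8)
The statement is an explicit computation rather than a new structural result, so the plan is to run the constructive normalization underlying Theorem~\ref{thm: Th ham} (the inductive scheme of Section~4) on the concrete field $Y$ of~(\ref{eq: P1 at infinity}) and to read off the four invariants $\lambda$, $a_1$, $a_2$ and the jet of $c(v)$ one degree at a time.

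The value of $\lambda$ is immediate. Collecting the terms of $Y$ that are linear in $\mathbf y$ and independent of $x$ gives the matrix $\mathbf A(0)=\begin{pmatrix} 0 & -\tfrac45 \\ -\tfrac{48\zeta}5 & 0 \end{pmatrix}$, whose eigenvalues are $\pm\lambda$ with $\lambda^2=\tfrac{192\zeta}{25}$, that is $\lambda=\tfrac{8\sqrt{3\zeta}}5$; writing $3\zeta=\tfrac{\sqrt6}2\,i=\tfrac{\sqrt6}2 e^{i\pi/2}$ and simplifying the powers of $2$ and $3$ produces the stated closed form $\tfrac{4\cdot 2^{3/4}3^{1/4}}5 e^{i\pi/4}$ (the branch of the square root fixing the labelling of $-\lambda$ versus $\lambda$). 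The first genuine step is to diagonalize $\mathbf A(0)$ by the linear map $\mathbf y\mapsto P\mathbf y$, $P=\begin{pmatrix} 1 & 1 \\ \tfrac{5\lambda}4 & -\tfrac{5\lambda}4\end{pmatrix}$, rescaled so that $\det P=1$ — this makes the induced fibered change of coordinates transversally symplectic for $\omega$ — which brings the constant part of the coefficient matrix to $\tx{Diag}(-\lambda,\lambda)$.

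The parameters $a_1,a_2$ are the coefficients of $xy_1\pp{y_1}$ and $xy_2\pp{y_2}$ surviving in the normal form~(\ref{eq: fibered normal form-1-1}); since the residue is invariant we already know $a_1+a_2=\res Y=1$, so only their difference has to be computed. Here some care is required: in the diagonalizing coordinates the $x$-linear diagonal of the coefficient matrix is the diagonal of $P^{-1}\tx{Diag}(\tfrac25,\tfrac35)P$, but this is not yet the final answer, because the $\mathbf y$-independent term $\tfrac{2\zeta}5 x\pp{y_1}$ (which is only $O(x)$, so $Y$ is not literally in the standard form of Definition~\ref{def: drsn}) must be removed by a translation $\mathbf y\mapsto\mathbf y+\mathbf g(x)$ with $\mathbf g=O(x)$, and this translation feeds the quadratic part of $Y$ back into the $x$-linear diagonal. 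Carrying out this degree $\le 1$ part of the normalization, including the feedback, yields $a_1=a_2=\tfrac12$.

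For $c(v)$ I would then iterate the homological scheme degree by degree in $v=y_1y_2$: at each step the homogeneous component of the field splits into its resonant part — spanned by the monomials $v^\ell y_i\pp{y_i}$ (together with $y_i\pp{y_i}$ and $xy_i\pp{y_i}$) that survive in~(\ref{eq: fibered normal form-1-1}) — and a non-resonant complement; one solves the corresponding homological equation for a transversally symplectic corrector eliminating the complement (solvability being guaranteed by $\lambda\neq 0$) and records the resonant obstruction, the accumulated obstructions assembling into $c(v)$. Pushing this through degree $v^3$ gives the three displayed coefficients. The main obstacle is entirely computational: from degree $v^2$ onward one must track how every lower-degree corrector (in particular the ones removing $\alpha(x)$ and the off-diagonal $x$-linear terms) propagates into higher degree, while keeping each corrector inside $\sdiff$; the coefficient of $v^3$, where rational contributions mix with the irrational ones carried by powers of $\lambda$, is where the bookkeeping is heaviest and an arithmetic slip is most likely.
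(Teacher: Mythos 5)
Your plan is sound and would eventually produce the stated numbers --- the algorithm in the proof of Theorem \ref{thm: Th ham} is constructive and, by Theorem \ref{thm: uniqueness}, whatever it yields through degree $v^{3}$ is the invariant --- but it is a genuinely different route from the paper's, and a much heavier one. The paper only performs your ``degree $\leq1$'' step on the three-dimensional field (this is where $\lambda$ and $a_{1}=a_{2}=\frac{1}{2}$ come from, essentially as you describe; note in fact that the feedback you rightly worry about vanishes at the relevant order, since the translation killing $\frac{2\zeta}{5}x\pp{y_{1}}$ has first component $O\left(x^{2}\right)$, so the quadratic term $-\frac{24}{5}y_{1}^{2}\pp{y_{2}}$ never reaches the $x$-linear diagonal and the conjugated matrix already has diagonal $\left(\frac{1}{2},\frac{1}{2}\right)x$). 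For $c\left(v\right)$, however, the paper does not iterate the homological scheme at all: by Propositions \ref{prop: trans ham implique asymp ham} and \ref{prop: periode}, $c$ is entirely determined by the restriction $Y_{0}=Y_{\mid\acc{x=0}}$, the planar Hamiltonian vector field of the cubic $H$, through its period map $T_{H}\left(a\right)=\frac{1}{2i\pi}\oint_{\gamma_{a}}\tau_{Y_{0}}$; a binomial expansion of the integrand plus the residue theorem give every coefficient $T_{H,k}$ in closed form, and then $\lambda+c\left(v\right)=h'\left(v\right)$, where $h$ is the compositional inverse of $-S_{H}$ and $S_{H}$ is the primitive of $T_{H}$ vanishing at $0$. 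This reduces the whole jet computation to one-variable power-series manipulations (the only step delegated to Maple), proves along the way that $c$ is analytic, and never requires tracking three-variable correctors or checking transversal symplecticity degree by degree --- precisely the bookkeeping through degree $6$ in $\mathbf{y}$ that you flag as the weak point of your scheme and do not actually carry out. One technical correction to your scheme: solvability of the homological equations is not ``guaranteed by $\lambda\neq0$'' alone. The condition $\lambda\neq0$ disposes of the non-resonant terms ($k_{1}\neq k_{2}$), but the terms $x^{k_{0}}\left(y_{1}y_{2}\right)^{\ell}y_{i}\pp{y_{i}}$ with $k_{0}\geq1$, $\left(k_{0},\ell\right)\neq\left(1,0\right)$, are resonant (they lie in the kernel of the adjoint of the semisimple part) and are removed by the second step of the paper's normalization, which requires $k_{0}+\ell\left(a_{1}+a_{2}\right)\neq0$, i.e. the non-degeneracy $a_{1}+a_{2}=1\notin\ww Q_{\leq0}$, not the non-vanishing of $\lambda$.
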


\subsection{Known results }

~

In \cite{MR816829}, \cite{MR794761} Yoshida shows that the doubly-resonant
saddle-nodes arising from the compactification of Painlevé equations
$\pain$, $j\in\acc{I,II,III,IV,V}$ (for generic values for the parameters)
is conjugate to polynomial vector fields of the form 
\begin{eqnarray}
 & Z= & x^{2}\pp x+\pare{-\left(1+\gamma y_{1}y_{2}\right)+a_{1}x}y_{1}\pp{y_{1}}\nonumber \\
 &  & +\pare{1+\gamma y_{1}y_{2}+a_{2}x}y_{2}\pp{y_{2}}\,\,\,\,\,,\label{eq: normal form Yoshida}
\end{eqnarray}
with $\gamma\in\ww C^{*}$ and $\left(a_{1},a_{2}\right)\in\ww C^{2}$
such that $a_{1}+a_{2}=1$. One drawback of this result is that Yoshida
admits fibered transformations $\Psi\left(x,\mathbf{y}\right)=\left(x,\psi_{1}\left(x,\mathbf{y}\right),\psi_{2}\left(x,\mathbf{y}\right)\right)$
of a more general form: 
\begin{eqnarray}
\psi_{i}\left(x,\mathbf{y}\right) & = & y_{i}\left(1+\sum_{\substack{\left(k_{0},k_{1},k_{2}\right)\in\ww N^{3}\\
k_{1}+k_{2}\geq1
}
}\frac{q_{i,\mathbf{k}}\left(x\right)}{x^{k_{0}}}y_{1}^{k_{1}+k_{0}}y_{2}^{k_{1}+k_{0}}\right)\,\,\,\,\,,\label{eq: Yoshida}
\end{eqnarray}
where each $q_{i,\mathbf{k}}$ is a formal power series. Notice that
$x$ can appear with negative exponents and therefore $\Psi\notin\diff{}$.
As we will see in the next subsection, the problem (when seen from
the viewpoint of analytic classification) is that the transformations
used by Yoshida have ``small'' regions of convergence, in the sense
that one cannot cover an entire neighborhood of the origin in $\ww C^{3}$
by taking the union of these regions. On the contrary, we prove in
an upcoming work that the formal normalizations presented here can
be embodied by diffeomorphisms analytic on finitely many sectors whose
union is a neighborhood of the origin. This entails the classical
theory of summability of formal power series.

\subsection{Analytic results}

Several authors studied the problem of convergence of the conjugating
transformations described above. Some results (that we recall soon)
will hold not only in the class of formal objects, but also for Gevrey
(and even summable) ones, or more generally for holomorphic functions
with asymptotic expansions in sectorial domains. We refer to \cite{MR1346201}
and \cite{MR672182} for details on asymptotic expansions, Gevrey
and summability theory.

Assuming that the initial vector field is analytic, Yoshida proves
in \cite{MR794761} that he can chose a conjugacy of the form $\left(\mbox{\ref{eq: Yoshida}}\right)$
which is the asymptotic expansion of an analytic function in a domain
\begin{eqnarray*}
 & \Big\{\left(x,\mathbf{z}\right)\in\text{\ensuremath{S\times\mathbf{D}\left(0,\mathbf{r}\right)\mid\abs{z_{1}z_{2}}<\nu\abs x}}\Big\}
\end{eqnarray*}
for some small $\nu>0$, where $S$ is a sector of opening less than
$\pi$ with vertex at the origin and $\mathbf{D}\left(0,\mathbf{r}\right)$
is a polydisc of small poly-radius $\mathbf{r}=\left(r_{1},r_{2}\right)$.
Moreover, the $\left(q_{i,\mathbf{k}}\left(x\right)\right)_{i,\mathbf{k}}$
are in fact Gevrey-1 series.

Under more restrictive conditions (which correspond to $c_{1}=c_{2}=0$
and $\mbox{Re}\left(a_{1}+a_{2}\right)>0$ in Theorem \ref{Th: Th drsn}),
Shimomura, improving on a result by Iwano \cite{MR601918}, shows
in \cite{MR748020} that analytic doubly-resonant saddle-nodes satisfying
these conditions are conjugate to:
\begin{eqnarray*}
 &  & x^{2}\pp x+\left(-\lambda+a_{1}x\right)y_{1}\pp{y_{1}}+\left(\lambda+a_{2}x\right)y_{2}\pp{y_{2}}\,\,\,\,
\end{eqnarray*}
\emph{via} a diffeomorphism whose coefficients have asymptotic expansions
as $x\rightarrow0$ in sectors of opening greater than $\pi$. Stolovitch
generalized this result for any dimension in \cite{MR1386227}. Unfortunately,
as shown by Yoshida in \cite{MR816829}, the hypothesis $c_{1}=c_{2}=0$
is not met in the case of Painlevé equations. 

In a forthcoming series of papers we will prove an analytic version
of Theorem \ref{thm: Th ham}, valid in sectorial domains with sufficiently
large opening, which in turn will help us to provide an analytic classification.
Let us insist once more on the key fact that the union of these sectorial
domains forms a whole neighborhood of the origin.

\subsection{Outline of the paper}
\begin{itemize}
\item In section 2 we recall some basic concepts and results from the theory
of formal vector fields and differential forms.
\item In section 3 we prove Theorems \ref{Th: Th drsn}, \ref{thm: uniqueness}
and \ref{thm: Th ham}, and compute the isotropies of the associated
normal forms.
\item In section 4 we explain how to compute any finite jet of the formal
invariant $c$ in Theorem \ref{thm: Th ham} in the case of the Painlevé
equations.
\end{itemize}
\tableofcontents{}

\section{Background\label{sub: notations}}

We refer the reader to \cite{MR2363178}, \cite{MR647488} and \cite{Brjuno}
for a detailed introduction to formal vector fields and formal diffeomorphisms.
Although these concepts are well-known by specialists, we will recall
briefly the needed results and nomenclature.

\subsection{Formal power series, vector fields and diffeomorphisms}

~

As usual, we will denote a formal power series as ${\displaystyle f\left(\mathbf{x}\right)=\sum_{\mathbf{k}}f_{\mathbf{k}}\mathbf{x^{k}}}$
where, for all $\mathbf{k}=\left(k_{1},\dots,k_{n}\right)\in\ww N$,
$f_{\mathbf{k}}\in\ww C$ and $\mathbf{x^{k}}=x_{1}^{k_{1}}\dots x_{n}^{k_{n}}$.
We will also use the notation $\abs{\mathbf{k}}=k_{1}+\dots+k_{n}$
for the \emph{degree} of a \emph{monomial} $\mathbf{x^{k}}$ (which
is of \emph{homogenous degree }$\mathbf{k}=\left(k_{1},\dots,k_{n}\right)$). 

\bigskip{}

We denote respectively by $\form{\mathbf{x}}$, $\cal D$, $\diff{}$
the sets of formal power series (equipped with an algebra structure),
vector field (equipped with a Lie algebra structure), diffeomorphisms
(equipped with a group structure). The maximal ideal of the algebra
$\form{\mathbf{x}}$ formed by formal power series with null constant
term is denoted by $\mathfrak{m}$. 

\bigskip{}

A vector field will be seen either as a an element of $\left(\form{\mathbf{x}}\right)^{n}$
or as a derivation on $\form{\mathbf{x}}$: for any vector field 
\begin{eqnarray}
X & = & \alpha_{1}\pp{x_{1}}+\cdots+\alpha_{n}\pp{x_{n}}\in\left(\form{\mathbf{x}}\right)^{n}\,\,,\label{eq: fvf-1}
\end{eqnarray}
its \emph{Lie derivative }is defined as the operator 
\begin{eqnarray}
\cal L_{X}\left(f\right) & = & \alpha_{1}\ppp f{x_{1}}+\cdots+\alpha_{n}\ppp f{x_{n}}\,\,,\label{eq: lie deriv}
\end{eqnarray}
for any formal power series $f\in$$\form{\mathbf{x}}$. The Lie bracket
$\cro{X,Y}$ of two vector fields $X,Y\in\cal D$ is defined by
\begin{eqnarray*}
\cal L_{\cro{X,Y}} & \left(f\right)= & \cal L_{X}\left({\cal L_{Y}}\left(f\right)\right)-\cal L_{Y}\left({\cal L}_{X}\left(f\right)\right)
\end{eqnarray*}
for all $f\in\form{\mathbf{x}}$. 

\bigskip{}

Similarly, a formal diffeomorphism will be seen either as an element
of $\Phi\left(\mathbf{x}\right)\in\left(\form{\mathbf{x}}\right)^{n}$
such that $\Phi\left(\mathbf{0}\right)=\mathbf{0}$ and $\tx D_{\mathbf{0}}\Phi=\tx{Jac}\left(\Phi\left(\mathbf{0}\right)\right)\in\tx{Gl}_{n}\left(\ww C\right)$,
or as an algebra automorphism of $\form{\mathbf{x}}$: given a formal
series ${\displaystyle f={\displaystyle \sum_{\mathbf{k}\in\ww N^{n}}a_{\mathbf{k}}\mathbf{x^{k}}}\in\form{\mathbf{x}}}$,
we denote by 
\begin{equation}
\Phi^{*}\left(f\right)=\sum_{\mathbf{k}\in\ww N^{n}}a_{\mathbf{k}}\phi_{1}^{k_{1}}\cdots\phi_{n}^{k_{n}}\,\,,\label{eq: pullback-1}
\end{equation}
the \emph{pull-back} of $f$ by $\Phi\in\diff{}$, where 
\[
\phi_{1}=\Phi\left(x_{1}\right),\dots,\phi_{n}=\Phi\left(x_{n}\right)\,\,.
\]
The Jacobian matrix (or the linear part) of $\Phi$ in the basis $\left(x_{1},\dots,x_{n}\right)$
is the matrix ${\displaystyle \left(\ppp{\phi_{i}}{x_{j}}\left(0,\dots,0\right)\right)_{i,j}}$.

\bigskip{}

The \emph{order} $\mbox{ord}\left(f\right)$ (\emph{resp. $\mbox{ord}\left(X\right)$},
\emph{resp. $\mbox{ord}\left(\Phi\right)$}) of a non-zero formal
power series $f\in\form{\mathbf{x}}$ (\emph{resp.} vector field\emph{
$X\in\cal D$, resp.} diffeomorphism $\Phi\in\diff{}$) is the maximal
integer $k\geq0$ such that $f\in\mathfrak{m}^{k}$ (\emph{resp. $\cal L_{X}\left(\mathfrak{m}\right)\subset\mathfrak{m}^{k}$},
\emph{resp. $\Phi^{*}\left(\mathfrak{m}\right)\subset\mathfrak{m}^{k}$}).
The notion of order allows to define the classical \emph{Krull topology
}on\emph{ }$\form{\mathbf{x}}$, $\cal D$ and $\diff{}$. The set
of formal vector field of order at least $k$ is a submodule denoted
by $\cal D^{\left(k\right)}\subset\cal D$. In particular, $\cal D^{\left(1\right)}$
is the submodule of \emph{singular }formal vector fields. We denote
by $\cal A^{\left(k\right)}\subset\diff{}$ the normal subgroup formed
by those automorphisms $\Phi$ such that 
\[
\Phi\left(x_{i}\right)-x_{i}\in\mathfrak{m}^{k+1}
\]
for each $i=1,\dots,n$. Each element of $\cal A^{\left(k\right)}$
will be called a formal diffeomorphism \emph{tangent to the identity
}up to order $k$. 

\bigskip{}

Given a subgroup $\cal G\subset\diff{}$, we say that two vector fields
$Y_{1},~Y_{2}$ in $\cal D$ are $\cal G-$\emph{conjugate} if there
exits a $\Phi\text{\ensuremath{\in}}\cal G$ such that: 
\begin{eqnarray*}
\cal L_{Y_{1}}\circ\Phi & = & \Phi\circ\cal L_{Y_{2}}\quad.
\end{eqnarray*}

\bigskip{}

The following two lemmas will be important in the proof of Theorem
\ref{Th: Th drsn}.
\begin{lem}
\label{lem: order of Lie bracket}Let $X,Y\in\fvf$ be two singular
formal vector fields. Then: 
\begin{eqnarray*}
\tx{ord}\left(\cro{X,Y}\right) & \geq & \tx{ord}\left(X\right)+\tx{ord}\left(Y\right)-1\,\,\,.
\end{eqnarray*}

\end{lem}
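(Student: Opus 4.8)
The plan is to work directly from the definition of order in terms of the Lie derivative: recall that $\tx{ord}(X)\geq p$ means precisely $\cal L_X(\mathfrak m)\subseteq\mathfrak m^p$. So I would set $p:=\tx{ord}(X)$ and $q:=\tx{ord}(Y)$, noting that $p,q\geq1$ since $X,Y\in\fvf$ are singular. The goal is then to show $\cal L_{\cro{X,Y}}(\mathfrak m)\subseteq\mathfrak m^{p+q-1}$, which by definition yields $\tx{ord}(\cro{X,Y})\geq p+q-1$. Expanding the bracket via $\cal L_{\cro{X,Y}}(f)=\cal L_X(\cal L_Y(f))-\cal L_Y(\cal L_X(f))$ for $f\in\mathfrak m$, I would estimate each of the two summands separately.

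The key step, and the only place where something genuinely must be proved, is the auxiliary claim that a Lie derivative shifts the filtration in a controlled way: if $\cal L_X(\mathfrak m)\subseteq\mathfrak m^p$, then $\cal L_X(\mathfrak m^k)\subseteq\mathfrak m^{k+p-1}$ for every $k\geq1$. This follows from the fact that $\cal L_X$ is a derivation of $\form{\mathbf x}$, together with the observation that $\mathfrak m^k$ is topologically generated by products $g_1\cdots g_k$ with each $g_i\in\mathfrak m$. Applying the Leibniz rule to such a product gives $\cal L_X(g_1\cdots g_k)=\sum_{i=1}^k g_1\cdots\cal L_X(g_i)\cdots g_k$, and each summand lies in $\mathfrak m^{p}\cdot\mathfrak m^{k-1}=\mathfrak m^{p+k-1}$. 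Linearity and continuity of $\cal L_X$ for the Krull topology then extend the estimate from generators to all of $\mathfrak m^k$ (using that $\mathfrak m^{p+k-1}$ is closed).

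With the auxiliary claim in hand the conclusion is immediate: for $f\in\mathfrak m$ one has $\cal L_Y(f)\in\mathfrak m^q$, hence $\cal L_X(\cal L_Y(f))\in\cal L_X(\mathfrak m^q)\subseteq\mathfrak m^{p+q-1}$, and symmetrically $\cal L_Y(\cal L_X(f))\in\mathfrak m^{p+q-1}$. The difference of these two terms therefore lies in $\mathfrak m^{p+q-1}$, which is exactly what is required. I expect the only mild subtlety to be the passage from generators of $\mathfrak m^k$ to arbitrary elements, which is handled by the continuity of $\cal L_X$; everything else is direct bookkeeping with the derivation property.
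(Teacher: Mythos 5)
Your proof is correct. There is, however, nothing in the paper to compare it with: Lemma \ref{lem: order of Lie bracket} is stated without proof, as part of the classical background on formal vector fields for which the paper defers to its cited references, so your argument fills in details the paper omits rather than following or departing from an argument given there.

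Concerning the argument itself, the reduction is the natural one: with $p=\tx{ord}\left(X\right)$ and $q=\tx{ord}\left(Y\right)$, everything rests on the auxiliary claim $\cal L_{X}\left(\mathfrak{m}^{k}\right)\subseteq\mathfrak{m}^{k+p-1}$, after which the two terms of $\cal L_{\cro{X,Y}}\left(f\right)=\cal L_{X}\left(\cal L_{Y}\left(f\right)\right)-\cal L_{Y}\left(\cal L_{X}\left(f\right)\right)$ are bounded separately; this part is carried out correctly. Your proof of the auxiliary claim is also valid, but the passage through topological generation of $\mathfrak{m}^{k}$ and Krull-continuity of $\cal L_{X}$ is heavier than needed, and the continuity of $\cal L_{X}$ is itself a point one should justify rather than merely invoke (it follows, for instance, from the a priori bound $\cal L_{X}\left(\mathfrak{m}^{N}\right)\subseteq\mathfrak{m}^{N}$ for singular $X$, which already uses the computation below). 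The limiting process can be bypassed entirely: since $\mathfrak{m}$ is generated as an ideal by the coordinates, every $f\in\mathfrak{m}^{k}$ is a \emph{finite} sum $f=\sum\mathbf{x}^{\mathbf{a}}h_{\mathbf{a}}$ over multi-indices with $\abs{\mathbf{a}}=k$ and $h_{\mathbf{a}}\in\form{\mathbf{x}}$, and the Leibniz rule applied to these finitely many terms, together with $\cal L_{X}\left(h_{\mathbf{a}}\right)=\cal L_{X}\left(h_{\mathbf{a}}-h_{\mathbf{a}}\left(0\right)\right)\in\mathfrak{m}^{p}$, yields the claim with no appeal to topology. Shorter still, the paper's defining formula (\ref{eq: lie deriv}) gives, for $f\in\mathfrak{m}^{k}$, $\cal L_{X}\left(f\right)=\alpha_{1}\ppp f{x_{1}}+\dots+\alpha_{n}\ppp f{x_{n}}$ with each $\alpha_{i}=\cal L_{X}\left(x_{i}\right)\in\mathfrak{m}^{p}$ and each $\ppp f{x_{i}}\in\mathfrak{m}^{k-1}$, whence $\cal L_{X}\left(f\right)\in\mathfrak{m}^{p+k-1}$ in one line. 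With either variant, the rest of your argument goes through verbatim.
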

\medskip{}

\begin{lem}
\label{lem: diffeo formel infinite composition}Let $\left(d_{n}\right)_{n\geq0}\subset\ww N_{>0}$
be a strictly increasing sequence of positive integers, and $\left(\Phi_{n}\right)_{n\geq0}$
a sequence of formal diffeomorphisms. Assume that for all $n\geq0$,
\[
\Phi_{n}\left(\mathbf{x}\right)=\mathbf{x}+P_{d_{n}}\left(\mathbf{x}\right)\,\,\left(\mbox{ mod }\mathfrak{m}^{d_{n}+1}\right)\,\,\,\,\,\,\,\,,
\]
where $P_{d_{n}}\left(\mathbf{x}\right)$ is a vector homogenous polynomial
of degree $d_{n}$. Then the sequence $\left(\Phi^{\cro n}\right)_{n\geq0}$,
defined by $\Phi^{\cro n}=\Phi_{n}\circ\dots\circ\Phi_{0}$, for all
$n\geq0$, is convergent, of limit $\Phi\in\diff{}$.

Moreover, if each $\Phi_{n}$ is fibered then $\Phi$ is fibered too.\end{lem}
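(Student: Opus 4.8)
The statement is a completeness property for the Krull (that is, $\mathfrak{m}$-adic) topology, so the plan is to show that $\left(\Phi^{\cro n}\right)_{n\geq0}$ is a Cauchy sequence coordinate-by-coordinate and then invoke the completeness of $\form{\mathbf{x}}$. First I would record the elementary estimate controlling one step of the composition. Writing $\Phi_{n+1}\left(\mathbf{x}\right)=\mathbf{x}+R_{n+1}\left(\mathbf{x}\right)$, the hypothesis that $P_{d_{n+1}}$ is homogeneous of degree $d_{n+1}$ forces every component of $R_{n+1}$ to lie in $\mathfrak{m}^{d_{n+1}}$. Since $\Phi^{\cro{n+1}}=\Phi_{n+1}\circ\Phi^{\cro n}$, evaluating at $\mathbf{x}$ gives
\[
\Phi^{\cro{n+1}}\left(\mathbf{x}\right)-\Phi^{\cro n}\left(\mathbf{x}\right)=R_{n+1}\left(\Phi^{\cro n}\left(\mathbf{x}\right)\right)=\left(\Phi^{\cro n}\right)^{*}\left(R_{n+1}\right)\,\,.
\]
As $\Phi^{\cro n}\in\diff{}$, its pull-back is an algebra automorphism of $\form{\mathbf{x}}$, hence fixes the maximal ideal $\mathfrak{m}$ and therefore preserves each power $\mathfrak{m}^{d}$. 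Consequently substituting $\Phi^{\cro n}$ into a series of order at least $d_{n+1}$ again yields a series of order at least $d_{n+1}$, which gives the key estimate $\Phi^{\cro{n+1}}\left(x_{i}\right)-\Phi^{\cro n}\left(x_{i}\right)\in\mathfrak{m}^{d_{n+1}}$ for every coordinate $x_{i}$.

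Next, because $\left(d_{n}\right)$ is strictly increasing we have $d_{n}\to+\infty$, so for any $m>n$ the telescoping sum
\[
\Phi^{\cro m}\left(x_{i}\right)-\Phi^{\cro n}\left(x_{i}\right)=\sum_{k=n}^{m-1}\left(\Phi^{\cro{k+1}}\left(x_{i}\right)-\Phi^{\cro k}\left(x_{i}\right)\right)
\]
lies in $\mathfrak{m}^{d_{n+1}}$, each summand belonging to $\mathfrak{m}^{d_{k+1}}\subseteq\mathfrak{m}^{d_{n+1}}$. Thus each coordinate sequence $\left(\Phi^{\cro n}\left(x_{i}\right)\right)_{n}$ is Cauchy in $\form{\mathbf{x}}$ for the $\mathfrak{m}$-adic topology; by completeness of $\form{\mathbf{x}}$ it converges to some $\phi_{i}\in\form{\mathbf{x}}$, and I would define $\Phi$ to be the tuple of the $\phi_{i}$.

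It remains to check that $\Phi$ is a genuine element of $\diff{}$ and to treat the fibered case. Since $\left(d_{n}\right)\subset\ww N_{>0}$ is strictly increasing, $d_{k}\geq2$ for every $k\geq1$, so for $k\geq1$ the correction $\Phi_{k}\left(\mathbf{x}\right)-\mathbf{x}$ lies in $\mathfrak{m}^{2}$ and affects neither the constant nor the linear part under composition. Hence the linear part of $\Phi^{\cro n}$ equals that of $\Phi_{0}$ for all $n$, and passing to the limit, the linear part of $\Phi$ coincides with the invertible linear part of the diffeomorphism $\Phi_{0}$, while $\Phi\left(\mathbf{0}\right)=\mathbf{0}$; therefore $\Phi\in\diff{}$ and $\Phi^{\cro n}\to\Phi$ in the Krull topology. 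For the \emph{moreover} part, a composition of fibered diffeomorphisms is fibered, so the first coordinate of each $\Phi^{\cro n}$ is identically $x$; being constant along the sequence, this coordinate passes to the limit unchanged, and $\Phi\in\fdiff$.

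Finally, the obstacle. This is essentially a standard completeness argument and presents no deep difficulty; the only points demanding care are the order-preservation of substitution by a diffeomorphism (the identity $\left(\Phi^{\cro n}\right)^{*}\left(\mathfrak{m}^{d}\right)=\mathfrak{m}^{d}$ underlying the key estimate) and the remark that only $\Phi_{0}$ contributes to the linear part, which is what guarantees that the limit is an invertible automorphism rather than merely a formal endomorphism.
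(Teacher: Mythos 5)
Your proof is correct and rests on exactly the same mechanism as the paper's: the $n$-th composition step only modifies terms of order at least $d_{n+1}$, so the partial compositions stabilize modulo every fixed power of $\mathfrak{m}$, forcing Krull convergence. The paper packages this as a one-line induction giving the explicit jet $\mathbf{x}+P_{d_{0}}\left(\mathbf{x}\right)+\dots+P_{d_{n}}\left(\mathbf{x}\right)\ \left(\mbox{mod }\mathfrak{m}^{d_{n}+1}\right)$, whereas you phrase it as a Cauchy/completeness argument and, usefully, spell out the points the paper leaves implicit (invertibility of the limit via the linear part of $\Phi_{0}$, and the fibered case).
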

\begin{proof}
It suffices to prove by induction that for all $n\geq0$: 
\[
\Phi_{n}\circ\dots\circ\Phi_{0}\left(\mathbf{x}\right)=\mathbf{x}+P_{d_{0}}\left(\mathbf{x}\right)+\dots+P_{d_{n}}\left(\mathbf{x}\right)\,\,\left(\mbox{ mod }\mathfrak{m}^{d_{n}+1}\right)\,\,,
\]
because the sequence $\left(d_{n}\right)_{n\geq0}\subset\ww N_{>0}$
is strictly increasing.
\end{proof}

\subsection{Exponential map and logarithm}

~

Given formal vector field $X\in\fvf$ and a formal power series $f\in\form{\mathbf{x}}$
and we set 
\[
\begin{cases}
\cal L_{X}^{\circ0}\left(f\right)=f\\
\cal L_{X}^{\circ\left(k+1\right)}\left(f\right):=\cal L_{X}\left(\cal L_{X}^{\circ k}\left(f\right)\right) & \,,\,\mbox{for all }k\geq0
\end{cases}
\]
so that we can consider the algebra homomorphism given by:
\begin{equation}
\tx{exp}\left(X\right)^{*}:\, f\mapsto\underset{k\geq0}{\sum}\frac{1}{k!}\cal L_{X}^{\circ k}\left(f\right)\quad.\label{eq: exp}
\end{equation}
This series is convergent in the Krull topology and defines in fact
a formal diffeomorphism, which is called the\emph{ time $1$ formal
flow} of $X\in\fvf$ or the \emph{exponential} of $X$. (see \emph{e.g.
}section 3 in \cite{MR2363178}).

\bigskip{}

For any vector field $X\in\fvf$, we consider also the \emph{adjoint
map} 
\begin{eqnarray*}
\mbox{\ensuremath{\tx{ad}}}_{X}:\,\fvf & \rightarrow & \fvf\\
Y & \mapsto & \cro{X,Y}
\end{eqnarray*}
and define 
\begin{eqnarray*}
\begin{cases}
\math{\left(\mbox{\ensuremath{\tx{ad}}}_{X}\right)^{\circ0}:=\mbox{\ensuremath{\tx{Id}}}}\\
\math{\left(\mbox{\ensuremath{\tx{ad}}}_{X}\right)^{\circ\left(k+1\right)}:=\mbox{\ensuremath{\tx{ad}}}_{X}\circ\left(\mbox{\ensuremath{\tx{ad}}}_{X}\right)^{\circ k}} & ,\,\forall k\in\ww N
\end{cases} &  & \,\,\,\,\,\,.
\end{eqnarray*}
We will need the following classical formula (see \cite{MR647488}).
\begin{prop}
\label{prop: exp (ad)}Given $X,Y\in\fvf$:
\[
\tx{exp}\left(X\right)_{*}\left(Y\right)=\tx{exp}\left(\tx{ad}_{X}\right)\left(Y\right)\quad,
\]
where 
\[
\math{\tx{exp}\left(\mbox{\ensuremath{\tx{ad}}}_{X}\right)\left(Y\right)=\underset{k\geq0}{\sum}\frac{1}{k!}\left(\mbox{\ensuremath{\tx{ad}}}_{X}\right)^{\circ k}\left(Y\right)=Y+\frac{1}{1!}\cro{X,Y}+\frac{1}{2!}\cro{X,\cro{X,Y}}+\dots}\quad.
\]

\end{prop}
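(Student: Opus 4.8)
The plan is to lift the identity to the level of derivations of $\form{\mathbf{x}}$ and then to conclude by faithfulness of the correspondence $Y\mapsto\mathcal{L}_Y$. First I would record the elementary conjugation rule: for every $\Phi\in\diff{}$ and every $Y\in\fvf$, the pushforward (\ref{eq: push forward intro}) acts on Lie derivatives by a genuine conjugation of operators, namely $\mathcal{L}_{\Phi_*(Y)}=(\Phi^*)^{\pm1}\circ\mathcal{L}_Y\circ(\Phi^*)^{\mp1}$, where $\Phi^*$ is the algebra automorphism of $\form{\mathbf{x}}$ attached to $\Phi$ and the placement of $\Phi^*$ and its inverse is the one dictated by (\ref{eq: push forward intro}). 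This is a one-line consequence of the chain rule applied to $\Phi_*(Y)=(\mathrm{D}\Phi\cdot Y)\circ\Phi^{-1}$, and it turns the nonlinear conjugacy operation into linear algebra of operators.

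Next I would specialize to $\Phi=\exp(X)$. By the definition (\ref{eq: exp}) the attached automorphism is the operator exponential $\exp(X)^*=\sum_{k\ge0}\frac{1}{k!}\mathcal{L}_X^{\circ k}$, with inverse $\exp(-X)^*$, so that $\mathcal{L}_{\exp(X)_*(Y)}$ is the conjugate of $\mathcal{L}_Y$ by this exponential. The heart of the computation is the classical Hadamard expansion of such a conjugation as the series $\sum_{k\ge0}\frac{1}{k!}(\mathrm{ad}_{\mathcal{L}_X})^{\circ k}(\mathcal{L}_Y)$, where $\mathrm{ad}_{\mathcal{L}_X}$ is the operator commutator. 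The key compatibility is that $Y\mapsto\mathcal{L}_Y$ is a morphism of Lie algebras: by the very definition of the bracket one has $\mathcal{L}_X\circ\mathcal{L}_Y-\mathcal{L}_Y\circ\mathcal{L}_X=\mathcal{L}_{[X,Y]}$, hence $(\mathrm{ad}_{\mathcal{L}_X})^{\circ k}(\mathcal{L}_Y)=\mathcal{L}_{(\mathrm{ad}_X)^{\circ k}(Y)}$ for all $k$. Re-summing and using faithfulness of $Y\mapsto\mathcal{L}_Y$ then identifies $\exp(X)_*(Y)$ with $\sum_{k\ge0}\frac{1}{k!}(\mathrm{ad}_X)^{\circ k}(Y)=\exp(\mathrm{ad}_X)(Y)$.

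Equivalently --- and this is the variant I would actually write down, since it makes the re-summation rigorous in one stroke and pins down all signs at once --- I would run the one-parameter flow argument. For $t\in\ww C$ set $Y(t):=\exp(tX)_*(Y)$, whose homogeneous components depend entirely on $t$; using the flow relation $\exp\left((t+s)X\right)=\exp(tX)\circ\exp(sX)$ together with the conjugation rule of the first paragraph, differentiation in $t$ yields a linear differential equation $\frac{\mathrm{d}}{\mathrm{d}t}Y(t)=\mathrm{ad}_X(Y(t))$ with $Y(0)=Y$, the right-hand side being precisely the infinitesimal form of the conjugacy (\ref{eq: push forward intro}). Since $t\mapsto\exp(t\,\mathrm{ad}_X)(Y)$ solves the same equation with the same initial value, the two curves coincide, and evaluating at $t=1$ gives the statement.

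The point that genuinely requires care --- and the one I expect to be the main obstacle --- is convergence: one must make sense of all these infinite sums, and of the uniqueness for the differential equation, in the right topology on $\fvf$. Here I would invoke Lemma \ref{lem: order of Lie bracket}, which gives $\mathrm{ord}\big((\mathrm{ad}_X)^{\circ k}(Y)\big)\ge\mathrm{ord}(Y)+k\big(\mathrm{ord}(X)-1\big)$: when $\mathrm{ord}(X)\ge2$ this tends to infinity, so $\exp(\mathrm{ad}_X)(Y)$ converges in the Krull topology and every rearrangement above is legitimate. The delicate case is $\mathrm{ord}(X)=1$, where brackets need not raise the order; there I would argue degree by degree, noting that $\mathcal{L}_X$ and $\mathrm{ad}_X$ preserve the filtration by total degree and act on each finite-dimensional graded piece through the exponential of a finite matrix, exactly as for the already-granted well-definedness of $\exp(X)^*$ in (\ref{eq: exp}). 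In each fixed degree the identity then reduces to the finite-dimensional relation between the exponential of an endomorphism and the exponential of its adjoint, where no convergence issue remains. The only remaining bookkeeping is to match the overall sign to the conventions adopted for the pushforward (\ref{eq: push forward intro}) and for $\mathrm{ad}_X=[X,\cdot]$, which is precisely what the flow computation of the previous paragraph settles.
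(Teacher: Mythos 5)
First, a point of comparison: the paper does not prove this proposition at all --- it is quoted as a classical formula with a pointer to \cite{MR647488} --- so your argument can only be judged on its own terms. Your overall strategy is the standard one and is fine in outline: transport everything to operators on $\form{\mathbf{x}}$ via the faithful Lie-algebra morphism $Y\mapsto\cal L_{Y}$ (so that $\cro{\cal L_{X},\cal L_{Y}}=\cal L_{\cro{X,Y}}$), use $\exp\left(X\right)^{*}={\textstyle \sum_{k\geq0}}\frac{1}{k!}\cal L_{X}^{\circ k}$, and conclude either by Hadamard's lemma or by the one-parameter flow ODE; your treatment of convergence (Lemma \ref{lem: order of Lie bracket} when $\tx{ord}\left(X\right)\geq2$, reduction to finite-dimensional jet spaces when $\tx{ord}\left(X\right)=1$) is also sound.

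The genuine gap is the sign, which is the only non-routine content of this formula, and it sits exactly at the three places where you defer it (``$\left(\Phi^{*}\right)^{\pm1}$'', ``the placement dictated by (\ref{eq: push forward intro})'', ``the flow computation settles the sign'') instead of computing it. Carry it out: from $\Phi_{*}\left(Y\right)=\left(\mbox{D}\Phi\cdot Y\right)\circ\Phi^{-1}$ the chain rule gives $\cal L_{\Phi_{*}\left(Y\right)}=\left(\Phi^{*}\right)^{-1}\circ\cal L_{Y}\circ\Phi^{*}$ (this is Proposition \ref{prop: lien diffeo, chp vect et form} applied to $0$-forms). Taking $\Phi^{*}=\exp\left(X\right)^{*}=e^{\cal L_{X}}$, Hadamard's lemma then yields
\[
\cal L_{\exp\left(X\right)_{*}\left(Y\right)}=e^{-\cal L_{X}}\circ\cal L_{Y}\circ e^{\cal L_{X}}=\sum_{k\geq0}\frac{\left(-1\right)^{k}}{k!}\,\cal L_{\left(\tx{ad}_{X}\right)^{\circ k}\left(Y\right)}\,,
\]
that is $\exp\left(X\right)_{*}\left(Y\right)=\exp\left(-\tx{ad}_{X}\right)\left(Y\right)$; likewise $Y\left(t\right):=\exp\left(tX\right)_{*}\left(Y\right)$ solves $\ddd{Y\left(t\right)}t=-\cro{X,Y\left(t\right)}$, not $+\cro{X,Y\left(t\right)}$ (it is the pullback $\left(\exp\left(tX\right)^{-1}\right)_{*}$ whose derivative at $t=0$ is $+\cro{X,\cdot\,}$). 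A one-variable check makes this concrete: $X=x\pp x$, $Y=x^{2}\pp x$ give $\exp\left(X\right):x\mapsto ex$, $\cro{X,Y}=Y$, and $\exp\left(X\right)_{*}\left(Y\right)=e^{-1}x^{2}\pp x=\exp\left(-\tx{ad}_{X}\right)\left(Y\right)$, whereas $\exp\left(\tx{ad}_{X}\right)\left(Y\right)=e\,x^{2}\pp x$. So, with the conventions the paper actually fixes for $\Phi_{*}$, for $\cro{\cdot,\cdot}$ and for $\exp$, your argument carried out honestly proves the identity with $\exp\left(-\tx{ad}_{X}\right)$ (equivalently, the stated identity for $\left(\exp\left(X\right)^{-1}\right)_{*}$), and asserting the $+$ signs in the Hadamard expansion and in the ODE is precisely where the write-up stops being a derivation. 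In fact the proposition as printed carries this sign slip relative to the paper's own conventions --- harmless in all of its applications, where only the vanishing and the order of the bracket terms matter --- but a proof cannot both claim to derive the sign from the definitions and land on the printed one; it must either flip the sign in the statement or flip one of the conventions, and your text does neither.
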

We also recall the existence of a logarithm for all formal diffeomorphisms
tangent to the identity (see \cite{MR2363178}, section 3).
\begin{prop}
\label{prop: exp is onto}For any formal diffeomorphism $\Phi\in\diff{}$,
there exists a unique vector field $F\in\cal D^{\left(2\right)}$
such that $\Phi=\varphi\circ\exp\left(F\right)$, where $\varphi\in\diff{}$
is the linear change of coordinate given by $\mbox{D}_{0}\Phi$. Moreover,
for each $k\geq2$, the \emph{exponential map} defines a bijection
between $\cal D^{\left(k\right)}$ and $\cal A^{\left(k-1\right)}$.
\end{prop}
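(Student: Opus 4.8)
The plan is to reduce first to diffeomorphisms tangent to the identity, and then to prove the asserted bijection $\tx{exp}\colon\cal D^{\left(k\right)}\to\cal A^{\left(k-1\right)}$ by solving the equation $\tx{exp}\left(F\right)=\Psi$ one homogeneous degree at a time. For the first assertion I would set $\varphi:=\tx D_{0}\Phi\in\tx{Gl}_{n}\left(\ww C\right)$ and $\Psi:=\varphi^{-1}\circ\Phi$, so that $\Psi\in\cal A^{\left(1\right)}$; the existence and uniqueness of $F\in\cal D^{\left(2\right)}$ with $\tx{exp}\left(F\right)=\Psi$ is then exactly the $k=2$ instance of the second assertion, and it yields $\Phi=\varphi\circ\tx{exp}\left(F\right)$ with $\varphi$ forced to be the linear part. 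So everything reduces to bijectivity of $\tx{exp}$ on each $\cal D^{\left(k\right)}$.

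Next I would check that $\tx{exp}$ does map $\cal D^{\left(k\right)}$ into $\cal A^{\left(k-1\right)}$. If $F\in\cal D^{\left(k\right)}$ then its coefficients $\cal L_{F}\left(x_{i}\right)$ lie in $\mathfrak{m}^{k}$, and since a vector field of order $k$ raises the order of any series by $k-1$ (the order arithmetic underlying Lemma \ref{lem: order of Lie bracket}), the iterates $\cal L_{F}^{\circ n}\left(x_{i}\right)$ for $n\geq2$ lie in $\mathfrak{m}^{2k-1}\subset\mathfrak{m}^{k+1}$. Hence $\tx{exp}\left(F\right)^{*}\left(x_{i}\right)-x_{i}\in\mathfrak{m}^{k}$, that is $\tx{exp}\left(F\right)\in\cal A^{\left(k-1\right)}$.

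The core of the argument is a triangularity observation. I decompose $F=\sum_{j\geq k}F_{j}$ into homogeneous components, where $F_{j}$ has coefficients that are homogeneous polynomials of degree $j$, so that $\cal L_{F_{j}}$ raises the homogeneous degree of a power series by exactly $j-1$. Expanding $\cal L_{F}^{\circ n}=\sum_{j_{1},\dots,j_{n}}\cal L_{F_{j_{1}}}\circ\cdots\circ\cal L_{F_{j_{n}}}$ and applying it to $x_{i}$, a summand contributes to homogeneous degree $m$ precisely when $j_{1}+\dots+j_{n}=m+n-1$. As each $j_{l}\geq k\geq2$, for $n\geq2$ every index satisfies $j_{l}\leq m-\left(n-1\right)\left(k-1\right)<m$. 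Consequently the degree-$m$ part of $\tx{exp}\left(F\right)^{*}\left(x_{i}\right)$ equals $\left(F_{m}\right)_{i}$ plus a universal polynomial expression in the lower components $F_{k},\dots,F_{m-1}$ only. I then solve $\tx{exp}\left(F\right)=\Psi$ for a given $\Psi\in\cal A^{\left(k-1\right)}$ by matching homogeneous degrees: the layers of degree $<k$ hold automatically—degree $1$ gives $x_{i}=x_{i}$ and degrees $2\leq m\leq k-1$ give $0=0$, since $\Psi\in\cal A^{\left(k-1\right)}$ and $F\in\cal D^{\left(k\right)}$—while for each $m\geq k$ the degree-$m$ equation determines $\left(F_{m}\right)_{i}$ uniquely from the degree-$m$ part of $\Psi^{*}\left(x_{i}\right)$ and the already-constructed lower components. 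This simultaneously gives surjectivity (the $F_{m}$ can always be solved for) and injectivity (they are forced), and the series $F=\sum_{m\geq k}F_{m}$ converges in the Krull topology because $\mbox{ord}\left(F_{m}\right)\to\infty$.

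The step I expect to demand the most care is the bookkeeping of orders behind the triangularity claim, namely that for $n\geq2$ every homogeneous component occurring in $\cal L_{F}^{\circ n}\left(x_{i}\right)$ at degree $m$ has degree strictly below $m$; once this is pinned down the induction is purely formal. As an alternative to assembling $F$ by hand, one could build a sequence of diffeomorphisms $\tx{exp}\left(F_{m}\right)$ of strictly increasing order and invoke Lemma \ref{lem: diffeo formel infinite composition} to obtain convergence of their composition to $\Psi$; the homogeneous-degree computation above is exactly what reconciles that picture with the single-exponential one.
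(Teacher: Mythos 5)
Your proof is correct, but there is an important structural point: the paper does not prove this proposition at all. It is recalled as a classical fact, with a pointer to \cite{MR2363178}, Section 3, so there is no internal argument to compare yours against; what you have written is a self-contained proof of a statement the paper outsources to the literature, and it follows the standard route taken there (filtration by degree and a triangularity argument). Your three steps are all sound: the reduction to the tangent-to-identity case via $\Psi:=\varphi^{-1}\circ\Phi$ and the $k=2$ instance of the bijection; the inclusion $\exp\left(\cal D^{\left(k\right)}\right)\subset\cal A^{\left(k-1\right)}$, using that a field of order $k$ raises orders by $k-1$ so that $\cal L_{F}^{\circ n}\left(x_{i}\right)\in\mathfrak{m}^{2k-1}\subset\mathfrak{m}^{k+1}$ for $n\geq2$, $k\geq2$; and the key computation that a composite $\cal L_{F_{j_{1}}}\circ\cdots\circ\cal L_{F_{j_{n}}}\left(x_{i}\right)$ is homogeneous of degree $j_{1}+\dots+j_{n}-n+1$, forcing every $j_{l}<m$ whenever $n\geq2$ and the total degree is $m$. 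Two small points are worth pinning down in a write-up. First, for fixed $m$ only finitely many tuples $\left(n;j_{1},\dots,j_{n}\right)$ contribute, because $j_{l}\geq k$ and $j_{1}+\dots+j_{n}=m+n-1$ force $n\left(k-1\right)\leq m-1$; this finiteness is what makes your ``universal polynomial expression'' in $F_{k},\dots,F_{m-1}$ a genuine (finite) polynomial rather than a formal series. Second, when you conclude $\exp\left(F\right)=\Psi$ from the equalities $\exp\left(F\right)^{*}\left(x_{i}\right)=\Psi^{*}\left(x_{i}\right)$ for all $i$, you are implicitly using that a pull-back is an algebra morphism, continuous for the Krull topology, hence determined by its values on the coordinates $x_{i}$. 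Both points are immediate and do not affect the validity of your argument.
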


\subsection{Jordan decomposition and Dulac-Poincaré normal forms}

~

According to \cite{MR647488}, any singular formal vector field $X\in\fvf$
admits a unique \emph{Jordan decomposition}: 
\begin{equation}
X=X_{S}+X_{N},\mbox{ with }X_{S},X_{N}\in\fvf\mbox{ and }\cro{X_{S},X_{N}}=0\,\,\,\,,\label{eq: Jordan}
\end{equation}
where the restriction of $X_{S}$ $\left(resp.\, X_{N}\right)$ to
each $k$-jet vector space $\mbox{J}_{k}=\nicefrac{\mathfrak{m}}{\mathfrak{m}^{k}}$
(which is finite dimensional), $k\geq0$, is semi-simple $\left(resp.\mbox{ nilpotent}\right)$.
This decomposition is compatible with truncation and invariant by
conjugacy: if $X=X_{S}+X_{N}$ is the Jordan decomposition of $X$
then 
\begin{enumerate}
\item for all $k\geq0$, $\mbox{j}_{k}\left(X\right)=\mbox{j}_{k}\left(X_{S}\right)+\mbox{j}_{k}\left(X_{N}\right)$
is the Jordan decomposition of $\mbox{j}_{k}\left(X\right)$(here,
for any singular vector field $Y\in\fvf$, $\mbox{j}_{k}\left(Y\right)$
is the endomorphism $\mbox{J}_{k}\rightarrow\mbox{J}_{k}$ induced
by $\cal L_{Y}$);
\item for any formal diffeomorphism $\varphi\in\diff{}$, $\varphi_{*}\left(X\right)=\varphi_{*}\left(X_{S}\right)+\varphi_{*}\left(X_{N}\right)$
is the Jordan decomposition of $\varphi_{*}\left(X\right)$.\end{enumerate}
\begin{defn}
We say that $X\in\fvf$ is in \emph{Poincaré-Dulac normal form}\textbf{
}if its Jordan decomposition $X=X_{S}+X_{N}$ is such that $X_{S}$
is in diagonal form, \emph{i.e.} $X_{S}=S\left(\lambda\right)$, where
$\lambda:=\left(\lambda_{1},\dots,\lambda_{n}\right)\in\ww C^{n}$
and
\begin{eqnarray}
S\left(\lambda\right) & := & \lambda_{1}x_{1}\pp{x_{1}}+\dots+\lambda_{n}x_{n}\pp{x_{n}}\,\,\,\,.\label{eq: log notation}
\end{eqnarray}

\end{defn}
As mentioned in the introduction, according to Poincaré-Dulac Theorem,
any singular vector field is conjugate to a Poincaré-Dulac normal
form, but this normal form is far from being unique: every vector
field is conjugate to many distinct Poincaré-Dulac normal forms.
\begin{defn}
A \emph{monomial vector field} is a vector field in $\cal D$ of the
form $\mathbf{x^{k}}S\left(\mu\right)$ for some $\mathbf{k}\in\cal I$,
where $\cal I$ is the index set
\[
\cal I:=\acc{\mathbf{k}=\left(k_{1},\dots k_{n}\right)\in\left(\ww Z_{\geq-1}\right)^{n}\mid\mbox{ at most one of the }k_{j}\mbox{'s is }\mbox{-1}}\quad,
\]
and some $\mu\in\ww C^{n}$ with the condition that $\mu=\underset{\,\,\,\begin{array}{c}
\uparrow\\
j
\end{array}}{\left(0,\dots,0,\mu_{j},0,\dots0\right)}$ if $k_{j}=-1$.
\end{defn}
Fixing $\lambda\in\ww C^{n}$, each monomial vector field $\mathbf{x^{k}}S\left(\mu\right)$
is an eigenvector for $\tx{ad}_{S\left(\lambda\right)}$ with eigenvalue
\[
\ps{\lambda,\mathbf{k}}:=\lambda_{1}k_{1}+\dots+\lambda_{n}k_{n}\,\,\,\,.
\]
This is a consequence of the following elementary lemma.
\begin{lem}
\label{lem: Lie bracket in log notation}For all $\lambda,\mu\in\ww C^{n}$,
and for all $\mathbf{l},\mathbf{m}\in\ww Z^{n}$:
\begin{eqnarray*}
\left[\mathbf{x}^{\mathbf{l}}S\left(\lambda\right),\mathbf{x}^{\mathbf{m}}S\left(\mu\right)\right] & = & \mathbf{x}^{\mathbf{l}+\mathbf{m}}\left(\left\langle \lambda,\mathbf{m}\right\rangle S\left(\mu\right)-\left\langle \mu,\mathbf{l}\right\rangle S\left(\lambda\right)\right)\,\,\,\,.
\end{eqnarray*}
\end{lem}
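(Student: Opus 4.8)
The statement is an elementary computation with the coordinate expression of the Lie bracket, so the plan is simply to expand both sides and match them; the only delicate point is the bookkeeping of exponents when differentiating monomials. First I would record that, since $S\left(\lambda\right)=\sum_{i}\lambda_{i}x_{i}\pp{x_{i}}$, one has $\mathbf{x}^{\mathbf{l}}S\left(\lambda\right)=\sum_{i}\lambda_{i}\mathbf{x}^{\mathbf{l}+e_{i}}\pp{x_{i}}$, where $e_{i}$ denotes the $i$-th vector of the canonical basis of $\ww Z^{n}$, and likewise for $\mathbf{x}^{\mathbf{m}}S\left(\mu\right)$.

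Next I would exploit the fact that both sides of the identity are $\ww C$-bilinear in $\left(\lambda,\mu\right)$ for fixed $\mathbf{l},\mathbf{m}$: indeed $\lambda\mapsto S\left(\lambda\right)$ is linear, the Lie bracket is bilinear, and $\ps{\cdot,\cdot}$ is bilinear. Hence it suffices to verify the formula for $\lambda=e_{a}$ and $\mu=e_{b}$, that is, for the single-term fields $X=\mathbf{x}^{\mathbf{l}+e_{a}}\pp{x_{a}}$ and $Y=\mathbf{x}^{\mathbf{m}+e_{b}}\pp{x_{b}}$. Applying the elementary rule $\cro{f\pp{x_{a}},g\pp{x_{b}}}=f\ppp{g}{x_{a}}\pp{x_{b}}-g\ppp{f}{x_{b}}\pp{x_{a}}$ together with $\ppp{}{x_{a}}\mathbf{x}^{\mathbf{m}+e_{b}}=\left(m_{a}+\delta_{ab}\right)\mathbf{x}^{\mathbf{m}+e_{b}-e_{a}}$ (and the symmetric expression for $f$), I obtain
\[
\cro{X,Y}=\left(m_{a}+\delta_{ab}\right)\mathbf{x}^{\mathbf{l}+\mathbf{m}+e_{b}}\pp{x_{b}}-\left(l_{b}+\delta_{ab}\right)\mathbf{x}^{\mathbf{l}+\mathbf{m}+e_{a}}\pp{x_{a}}\,\,.
\]

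Finally I would compare this with the right-hand side of the lemma in the same special case. Since $\ps{e_{a},\mathbf{m}}=m_{a}$, $S\left(e_{b}\right)=x_{b}\pp{x_{b}}$, $\ps{e_{b},\mathbf{l}}=l_{b}$ and $S\left(e_{a}\right)=x_{a}\pp{x_{a}}$, that right-hand side equals $m_{a}\mathbf{x}^{\mathbf{l}+\mathbf{m}+e_{b}}\pp{x_{b}}-l_{b}\mathbf{x}^{\mathbf{l}+\mathbf{m}+e_{a}}\pp{x_{a}}$. The two expressions differ only by the term $\delta_{ab}\left(\mathbf{x}^{\mathbf{l}+\mathbf{m}+e_{b}}\pp{x_{b}}-\mathbf{x}^{\mathbf{l}+\mathbf{m}+e_{a}}\pp{x_{a}}\right)$, which vanishes identically: if $a\neq b$ because $\delta_{ab}=0$, and if $a=b$ because then $e_{a}=e_{b}$. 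Summing over the canonical basis by bilinearity recovers the general identity. There is no genuine obstacle here; the one place demanding attention is exactly this cancellation of the diagonal $\delta_{ab}$-terms (equivalently, in a direct unreduced computation, the cancellation of the $\lambda_{k}\mu_{k}$ contributions), which is what makes the bracket of these two Euler-type fields close up into the stated antisymmetric form.
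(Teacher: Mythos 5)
Your proof is correct: the paper states this lemma without any proof, treating it as an elementary computation, and your argument supplies exactly the verification the authors leave implicit. Reducing by bilinearity in $(\lambda,\mu)$ to the basis case $\lambda=e_{a}$, $\mu=e_{b}$, applying the standard rule $\cro{f\pp{x_{a}},g\pp{x_{b}}}=f\ppp g{x_{a}}\pp{x_{b}}-g\ppp f{x_{b}}\pp{x_{a}}$ to the Laurent monomials, and observing that the $\delta_{ab}$-terms cancel identically (whether $a\neq b$ or $a=b$) is a complete and correct derivation, valid for all $\mathbf{l},\mathbf{m}\in\ww Z^{n}$ as required.
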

\begin{rem}
\label{rem: monomial exp}Notice that each $X\in\cal D$ can be uniquely
written as an infinite sum of monomial vector fields of the form
\[
X=\sum_{\mathbf{k}\in\cal I}\mathbf{x}^{\mathbf{k}}S\left(\mu_{\mathbf{k}}\right)\quad,
\]
which is a Krull-convergent series in $\cal D$. We will call this
expression the \emph{monomial expansion} of $X$.
\end{rem}
Assume now that $X=S\left(\lambda\right)+X_{N}$ is in Poincaré-Dulac
normal form and let us consider the monomial expansion of $X_{N}:$
\begin{eqnarray*}
X_{N} & = & \sum_{\mathbf{k}\in\cal I}\mathbf{x^{k}}S\left(\mu_{\mathbf{k}}\right)\quad.
\end{eqnarray*}
The condition $\cro{X_{S},X_{N}}=0$ is equivalent to require 

\[
\forall\mathbf{k}\in\cal I,\,\ps{\lambda,\mathbf{k}}\neq0\Longrightarrow\mu_{\mathbf{k}}=0\quad;
\]
in other words, each $\mathbf{x^{k}}$ in the monomial expansion of
$X_{N}$ is a so-called \emph{resonant monomial}.
\begin{prop}
\label{prop: conjuation with resonant monomial}Let $X,Y\in\fvf$
be two vector fields in Poincaré-Dulac normal form with the same semi-simple
part $S\left(\mu\right)$ for some $\mu\in\ww C^{n}$, and with nilpotent
parts in $\cal D^{\left(2\right)}$: 
\[
\begin{cases}
X=S\left(\mu\right)+X_{N} & \mbox{ , with }X_{N}\in\cal D^{\left(2\right)},\mbox{ nilpotent, and }\cro{S\left(\mu\right),X_{N}}=0\\
Y=S\left(\mu\right)+Y_{N} & \mbox{ , with }Y_{N}\in\cal D^{\left(2\right)},\mbox{ nilpotent, and }\cro{S\left(\mu\right),Y_{N}}=0
\end{cases}\,\quad.
\]
Assume $X$ and $Y$ are conjugate by a formal diffeomorphism $\Phi$
such that $\mbox{D}_{0}\Phi=\tx{diag}\left(\lambda_{1},\dots,\lambda_{n}\right)$
for some $\lambda_{1},\dots,\lambda_{n}\in\ww C^{*}$. If we write
$\Phi=\varphi\circ\exp\left(F\right)$ for some vector field $F\in\cal D^{\left(2\right)}$,
where $\varphi\in\diff{}$ is the linear diffeomorphism associated
to $\mbox{D}_{0}\Phi=\tx{diag}\left(\lambda_{1},\dots,\lambda_{n}\right)$,
then necessarily $\cro{S\left(\mu\right),F}=0$.\end{prop}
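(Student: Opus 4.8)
The plan is to reduce the statement to a single equation involving only the semisimple part $S\left(\mu\right)$, and then to run a lowest-degree argument. The key first step exploits the fact that $X$ and $Y$ share the \emph{same} semisimple part. Since $X=S\left(\mu\right)+X_{N}$ and $Y=S\left(\mu\right)+Y_{N}$ are the respective Jordan decompositions (semisimple plus nilpotent, commuting), and the Jordan decomposition is invariant under conjugacy, the decomposition $\Phi_{*}\left(Y\right)=\Phi_{*}\left(S\left(\mu\right)\right)+\Phi_{*}\left(Y_{N}\right)$ is the Jordan decomposition of $X$. By uniqueness of the Jordan decomposition this forces $\Phi_{*}\left(S\left(\mu\right)\right)=S\left(\mu\right)$ and $\Phi_{*}\left(Y_{N}\right)=X_{N}$. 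This is the crucial simplification: although $\Phi$ a priori only relates the full vector fields, it must in fact fix the semisimple part exactly.

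Next I would strip off the linear factor $\varphi$. Writing $\Phi=\varphi\circ\exp\left(F\right)$, using $\Phi_{*}=\varphi_{*}\circ\exp\left(F\right)_{*}$ and Proposition \ref{prop: exp (ad)}, the identity above becomes $\varphi_{*}\left(\exp\left(\tx{ad}_{F}\right)\left(S\left(\mu\right)\right)\right)=S\left(\mu\right)$. Since $\varphi$ is the diagonal linear map $\tx{diag}\left(\lambda_{1},\dots,\lambda_{n}\right)$, a one-line computation with the pushforward formula $\left(\ref{eq: push forward intro}\right)$ gives $\varphi_{*}\left(x_{i}\pp{x_{i}}\right)=x_{i}\pp{x_{i}}$, whence $\varphi_{*}\left(S\left(\mu\right)\right)=S\left(\mu\right)$, and likewise for $\varphi^{-1}$. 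Applying $\left(\varphi^{-1}\right)_{*}$ therefore collapses everything to the single equation
\[
\exp\left(\tx{ad}_{F}\right)\left(S\left(\mu\right)\right)=S\left(\mu\right),\qquad F\in\cal D^{\left(2\right)}.
\]
It now remains to show that this forces $\cro{S\left(\mu\right),F}=0$.

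For the last step I would argue by contradiction on the lowest degree. Set $D:=\tx{ad}_{S\left(\mu\right)}$ and use the monomial expansion (Remark \ref{rem: monomial exp}) to split $F=F_{0}+F_{1}$ into its resonant part $F_{0}\in\ker D$ and its non-resonant part $F_{1}$ (the sum of monomial components on which $D$ acts by a nonzero eigenvalue), so that $\cro{S\left(\mu\right),F}=DF_{1}$ and the claim is exactly $F_{1}=0$. Suppose $F_{1}\neq0$ and let $d\geq2$ be its order, with nonzero homogeneous leading part $F_{1}^{\left(d\right)}$. Subtracting the $k=0$ term, the displayed identity reads $\sum_{k\geq1}\frac{1}{k!}\left(\tx{ad}_{F}\right)^{\circ k}\left(S\left(\mu\right)\right)=0$. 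Its $k=1$ term is $\cro{F,S\left(\mu\right)}=-DF_{1}$, whose degree-$d$ component is $-DF_{1}^{\left(d\right)}$; because $D$ preserves homogeneous degree (it merely scales each monomial by $\ps{\mu,\mathbf{k}}$) and acts injectively on non-resonant monomials, this component is nonzero. For $k\geq2$ we have $\left(\tx{ad}_{F}\right)^{\circ k}\left(S\left(\mu\right)\right)=\left(\tx{ad}_{F}\right)^{\circ\left(k-1\right)}\left(-DF_{1}\right)$, and iterating Lemma \ref{lem: order of Lie bracket} gives $\tx{ord}\left(\left(\tx{ad}_{F}\right)^{\circ k}\left(S\left(\mu\right)\right)\right)\geq d+\left(k-1\right)\left(\tx{ord}\left(F\right)-1\right)\geq d+1$, so these higher terms contribute nothing in degree $d$. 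Hence the degree-$d$ part of the whole sum equals $-DF_{1}^{\left(d\right)}\neq0$, contradicting that the sum vanishes; therefore $F_{1}=0$.

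The main obstacle is conceptual rather than computational, and lies entirely in the first reduction: recognizing that sharing the semisimple part forces $\Phi$ (and hence $\exp\left(\tx{ad}_{F}\right)$) to fix $S\left(\mu\right)$ exactly, via invariance and uniqueness of the Jordan decomposition. Once this is in hand the remainder is a routine order estimate, whose only delicate point is that $D=\tx{ad}_{S\left(\mu\right)}$ is simultaneously degree-preserving and invertible on the non-resonant monomials, so that the higher brackets cannot cancel the leading obstruction $-DF_{1}^{\left(d\right)}$.
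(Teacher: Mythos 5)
Your proof is correct and takes essentially the same route as the paper: both rest on the conjugacy-invariance and uniqueness of the Jordan decomposition to force $\Phi_{*}\left(S\left(\mu\right)\right)=S\left(\mu\right)$, then on Proposition \ref{prop: exp (ad)} together with the order estimate of Lemma \ref{lem: order of Lie bracket} (valid because $\tx{ord}\left(F\right)\geq2$) to see that the leading term $\cro{F,S\left(\mu\right)}$ of the resulting series cannot be cancelled by the higher nested brackets. The only differences are cosmetic: the paper first reduces to a tangent-to-the-identity diffeomorphism by composing with $\left(\mbox{D}_{0}\Phi\right)^{-1}$, while you strip the diagonal linear factor afterwards using $\varphi_{*}\left(S\left(\mu\right)\right)=S\left(\mu\right)$, and your final lowest-degree argument via the resonant/non-resonant splitting of $F$ is a more detailed rendering of the same order estimate.
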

\begin{rem}
Recall that the condition $\cro{S\left(\mu\right),F}=0$ means that
if we write ${\displaystyle F=\sum_{\mathbf{k}\in\cal I}\mathbf{x^{k}}S\left(\lambda_{\mathbf{k}}\right)}$,
then $\ps{\mu,\mathbf{k}}\neq0\Longrightarrow\lambda_{\mathbf{k}}=0$.\end{rem}
\begin{proof}
We can assume without loss of generality that $\Phi$ is tangent to
the identity. Indeed by setting $\math{P:=\left(\mbox{D}_{0}\Phi\right)^{-1}}$,
we obtain that $P\circ\Phi$ is tangent to the identity and conjugates
$X$ to $\tilde{Y}=\mbox{D}P\cdot\left(Y\circ P^{-1}\right)$. Since
$\mbox{D}P$ is diagonal, the assumptions made on $Y$ are also met
by $\tilde{Y}$. Moreover, it is clear that the property we have to
prove is true for $\Phi$ if and only if it is true for $P\circ\Phi$.
Therefore we may suppose that $\Phi$ is tangent to the identity.
According to Proposition \ref{prop: exp is onto}, there exists $F\in\cal D^{\left(2\right)}$
such that $\tx{exp}\left(F\right)=\Phi$, while according to Proposition
\ref{prop: exp (ad)} we have:

\begin{eqnarray}
\tx{exp}\left(F\right)_{*}\left(S\left(\mu\right)\right) & = & S\left(\mu\right)+\cro{F,S\left(\mu\right)}+\frac{1}{2!}\cro{F,\cro{F,S\left(\mu\right)}}+\dots
\end{eqnarray}
Since $\tx{exp}\left(F\right)_{*}\left(S\left(\mu\right)\right)=S\left(\mu\right)$
by uniqueness of the Jordan decomposition, we have 
\begin{eqnarray}
\cro{F,S\left(\mu\right)}+\frac{1}{2!}\cro{F,\cro{F,S\left(\mu\right)}}+\dots & = & 0\,\,\,\,.\label{eq: preuve lemme PD}
\end{eqnarray}
This implies that $\cro{F,S\left(\mu\right)}=0$, using Lemma \ref{lem: order of Lie bracket}
and the fact that $\mbox{ord}\left(F\right)\geq2$. \end{proof}
\begin{rem}
\label{rem: D is diagonal}The assumption that $\mbox{D}_{0}\Phi$
is in diagonal form necessarily holds if $\mu_{i}\neq\mu_{j}$, for
all $i\neq j$.
\end{rem}

\subsection{\label{sub:Formal-differential-forms}Formal differential forms}

~
\begin{defn}
We denote by $\widehat{\Omega^{1}}\left(\form{\mathbf{x}}\right)$
(or just $\widehat{\Omega^{1}}$ for simplicity) the set of \emph{formal
1-forms} in $\ww C^{n}$. It is the dual of $\mbox{Der}\left(\form{\mathbf{x}}\right)$
as $\form{\mathbf{x}}$-module. 
\end{defn}
Fixing the dual basis $\left(\mbox{d}x_{1},\dots\mbox{d}x_{n}\right)$
of $\left(\ww C^{n}\right)^{*}$, $\widehat{\Omega^{1}}\left(\form{\mathbf{x}}\right)$
is a free $\form{\mathbf{x}}$-module of rank $n$, generated by $\mbox{d}x_{1},\dots,\mbox{d}x_{n}$. 
\begin{defn}
For any $p\in\ww N$, we denote the $p$-exterior product of $\widehat{\Omega^{1}}\left(\form{\mathbf{x}}\right)$
by 
\[
\widehat{\Omega^{p}}\left(\form{\mathbf{x}}\right):=\bigwedge^{p}\widehat{\Omega^{1}}\left(\form{\mathbf{x}}\right)
\]
 (or just $\widehat{\Omega^{p}}$). Its elements will be called \emph{formal
$p$-forms}. 
\end{defn}
The set of $0$-forms is the set of formal series: $\widehat{\Omega^{0}}\left(\form{\mathbf{x}}\right):=\form{\mathbf{x}}$. 
\begin{defn}
We denote by 
\[
\widehat{\Omega}\left(\form{\mathbf{x}}\right):=\stackrel[p=0]{+\infty}{\oplus}\widehat{\Omega^{p}}\left(\form{\mathbf{x}}\right)
\]
 (or just $\widehat{\Omega}$ for simplicity) the exterior algebra
of the formal forms in $\ww C^{n}$, and by $\mbox{d}$ the exterior
derivative on it.
\end{defn}
One can also extend the Krull topology to $\widehat{\Omega}$.

We can define the action of $\diff{}$ by \emph{pull-back} on $\widehat{\Omega}\left(\form{\mathbf{x}}\right)$
thanks to the following properties: 
\begin{enumerate}
\item $\ww C$-linearity
\item for all $f\in\form{\mathbf{x}}$, $\Phi^{*}\left(f\right)$ is defined
as in (\ref{eq: pullback-1})
\item $\forall\alpha,\beta\in\widehat{\Omega}\left(\form{\mathbf{x}}\right)$,
$\forall\Phi\in\diff{}$, $\Phi^{*}\left(\alpha\wedge\beta\right)=\Phi^{*}\left(\alpha\right)\wedge\Phi^{*}\left(\beta\right)$
\item $\forall\Phi\in\diff{}$, $\Phi^{*}\circ\mbox{d}=\mbox{d}\circ\Phi^{*}$.
\end{enumerate}
For any $X\in\fvf$ and $\alpha\in\widehat{\Omega}\left(\form{\mathbf{x}}\right)$,
we denote by $\cal L_{X}\left(\alpha\right)$ the \emph{Lie derivative}
of $\alpha$ with respect to $X$. We recall that $\cal L_{X}$ is
uniquely determined by the following properties: 
\begin{enumerate}
\item for all $k\geq0$, $\cal L_{X}:\widehat{\Omega^{k}}\left(\form{\mathbf{x}}\right)\longrightarrow\widehat{\Omega^{k}}\left(\form{\mathbf{x}}\right)$
is linear
\item for all $f\in\form{\mathbf{x}}$ (\emph{i.e.} $f$ is a 0-form ),
$\cal L_{X}\left(f\right)$ is as in Definition (\ref{eq: lie deriv})
\item $\cal L_{X}$ is a derivation of $\widehat{\Omega}\left(\form{\mathbf{x}}\right)$,
\emph{i.e. }for all $\alpha,\beta\in\widehat{\Omega}\left(\form{\mathbf{x}}\right)$:
\[
\cal L_{X}\left(\alpha\wedge\beta\right)=\cal L_{X}\left(\alpha\right)\wedge\beta+\alpha\wedge\cal L_{X}\left(\beta\right)
\]
(Leibniz rule)
\item $\cal L_{X}\circ\mbox{d}=\mbox{d}\circ\cal L_{X}$.
\end{enumerate}
We will need the following classical formula, which extends $\left(\mbox{\ref{eq: exp}}\right)$.
\begin{prop}
\label{prop: exp }

$\forall\alpha\in\widehat{\Omega}\left(\form{\mathbf{x}}\right),\, X\in\fvf$:
\[
\tx{exp}\left(X\right)^{*}\left(\alpha\right)=\tx{exp}\left(\cal L_{X}\right)\left(\alpha\right)=\underset{k\geq0}{\sum}\frac{1}{k!}\cal L_{X}^{\circ k}\left(\alpha\right)\,\,\,.
\]
\end{prop}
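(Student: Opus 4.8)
The plan is to recognize both maps in play as continuous operators of the same algebraic type and to match them on a generating set. Set $E:=\sum_{k\geq0}\frac{1}{k!}\cal L_{X}^{\circ k}$, so the asserted identity reads $\exp(X)^{*}=E$ on $\widehat{\Omega}$. The operator $\exp(X)^{*}$ already enjoys all the structural properties we need: it is a $\ww C$-algebra homomorphism commuting with the exterior derivative $\mathrm{d}$ by properties (3) and (4) of the pull-back, and it is continuous for the Krull topology, being the pull-back by a formal diffeomorphism. The strategy is therefore to prove that $E$ is \emph{also} a continuous algebra homomorphism commuting with $\mathrm{d}$, and that $\exp(X)^{*}$ and $E$ agree on the $0$-forms $\widehat{\Omega^{0}}=\form{\mathbf{x}}$ and on the differentials $\mathrm{d}x_{1},\dots,\mathrm{d}x_{n}$. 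Since every $p$-form is a finite $\form{\mathbf{x}}$-linear combination $\alpha=\sum_{\abs I=p}f_{I}\,\mathrm{d}x_{i_{1}}\wedge\dots\wedge\mathrm{d}x_{i_{p}}$, and since $0$-forms already include all power series, agreement on these generators propagates through the homomorphism and $\mathrm{d}$-commutation to all of $\widehat{\Omega}$.

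The key step is to establish that $E$ is an algebra homomorphism. I would first prove by induction on $k$, using the Leibniz rule (property (3) of the Lie derivative), the iterated formula
\[
\cal L_{X}^{\circ k}\left(\alpha\wedge\beta\right)=\sum_{i=0}^{k}\binom{k}{i}\cal L_{X}^{\circ i}\left(\alpha\right)\wedge\cal L_{X}^{\circ\left(k-i\right)}\left(\beta\right)\,\,\,.
\]
Dividing by $k!$ and summing over $k$, the right-hand side becomes the Cauchy product of the two series defining $E\left(\alpha\right)$ and $E\left(\beta\right)$, whence $E\left(\alpha\wedge\beta\right)=E\left(\alpha\right)\wedge E\left(\beta\right)$. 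That $E$ commutes with $\mathrm{d}$ follows at once from property (4) of the Lie derivative, $\cal L_{X}\circ\mathrm{d}=\mathrm{d}\circ\cal L_{X}$, applied term by term together with the continuity of $\mathrm{d}$.

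It then remains to compare the two operators on generators. On $0$-forms the equality $\exp(X)^{*}(f)=E(f)$ is precisely $\left(\mbox{\ref{eq: exp}}\right)$, the defining property of the exponential recalled from \cite{MR2363178}. On each $\mathrm{d}x_{i}=\mathrm{d}(x_{i})$, both operators commute with $\mathrm{d}$ and agree on the $0$-form $x_{i}$, so they agree on $\mathrm{d}x_{i}$; expanding $\alpha=\sum_{\abs I=p}f_{I}\,\mathrm{d}x_{i_{1}}\wedge\dots\wedge\mathrm{d}x_{i_{p}}$ by linearity and the homomorphism property then yields $\exp(X)^{*}(\alpha)=E(\alpha)$. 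I expect the main obstacle to lie not in this algebra but in the convergence in the Krull topology, which intervenes at two points: to guarantee that $E(\alpha)=\sum_{k}\frac{1}{k!}\cal L_{X}^{\circ k}(\alpha)$ is well defined for an arbitrary formal form, and to legitimize the rearrangement of the double series into a Cauchy product. Both reduce to the $0$-form case: convergence of $\sum_{k}\frac{1}{k!}\cal L_{X}^{\circ k}(x_{i})$ is granted by the construction of $\exp(X)^{*}$, convergence on the $\mathrm{d}x_{i}$ follows by applying the continuous operator $\mathrm{d}$ since $\cal L_{X}^{\circ k}(\mathrm{d}x_{i})=\mathrm{d}\big(\cal L_{X}^{\circ k}(x_{i})\big)$, and the general case follows from the iterated Leibniz identity; completeness of $\form{\mathbf{x}}$ (and hence of $\widehat{\Omega}$) for the Krull topology then justifies the Cauchy-product rearrangement, which finishes the proof.
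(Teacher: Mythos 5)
Your proof is correct and follows essentially the same route as the paper's (sketched) argument: agreement on $0$-forms by the definition $\left(\mbox{\ref{eq: exp}}\right)$, agreement on $1$-forms via $\cal L_{X}\circ\mathrm{d}=\mathrm{d}\circ\cal L_{X}$, and extension to all $p$-forms through the exterior product and the Leibniz rule. Your write-up merely fills in the details the paper leaves implicit (the iterated Leibniz formula, the Cauchy-product rearrangement, and the Krull-convergence bookkeeping), which is a faithful expansion of the same strategy rather than a different one.
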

\begin{proof}
(Sketch)

This formula is true for $0$-forms, and we just has to prove it for
$1$-forms, because we can then it extend to any $p$-form using the
exterior product and the Leibniz formula. In order to prove the result
for $1$-forms, one has to use the fact that $\cal L_{X}\circ\mbox{d}=\mbox{d}\circ\cal L_{X}$. 
\end{proof}
With the same arguments, and using formulas $\Phi^{*}\circ\mbox{d}=\mbox{d}\circ\Phi^{*}$
and $\Phi^{*}\left(\alpha\wedge\beta\right)=\Phi^{*}\left(\alpha\right)\wedge\Phi^{*}\left(\beta\right)$,
we can prove the following Proposition.
\begin{prop}
\label{prop: lien diffeo, chp vect et form}For all $\Phi\in\diff{}$,
$X\in\fvf$ and $\theta\in\widehat{\Omega}\left(\form{\mathbf{x}}\right)$,
we have; 
\[
\Phi^{*}\left(\cal L_{\Phi_{*}\left(X\right)}\left(\omega\right)\right)=\cal L_{X}\left(\Phi^{*}\left(\omega\right)\right)\,\,\,\,.
\]
In other words, the following diagram is commutative for all $p\geq0$:
\end{prop}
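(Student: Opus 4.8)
The plan is to follow verbatim the scheme already used for Proposition \ref{prop: exp }: I would prove the identity first on $0$-forms, then on $1$-forms, and finally bootstrap to arbitrary $p$-forms via the wedge product. Abbreviating the two sides as the $\ww C$-linear, Krull-continuous operators $D_{1}:=\Phi^{*}\circ\cal L_{\Phi_{*}\left(X\right)}$ and $D_{2}:=\cal L_{X}\circ\Phi^{*}$ on $\widehat{\Omega}\left(\form{\mathbf{x}}\right)$, it suffices to establish $D_{1}=D_{2}$.

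For the base case of $0$-forms, the required equality $\Phi^{*}\left(\cal L_{\Phi_{*}\left(X\right)}\left(f\right)\right)=\cal L_{X}\left(\Phi^{*}\left(f\right)\right)$ is just the chain rule combined with the definition $\Phi_{*}\left(X\right)=\left(\mbox{D}\Phi\cdot X\right)\circ\Phi^{-1}$: writing $X=\sum_{i}\alpha_{i}\pp{x_{i}}$ and $\phi_{k}=\Phi\left(x_{k}\right)$, both sides reduce to $\sum_{k}\cal L_{X}\left(\phi_{k}\right)\cdot\left(\ppp f{x_{k}}\circ\Phi\right)$.

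The structural heart of the argument is that neither $D_{i}$ is an ordinary derivation of $\widehat{\Omega}$; rather, since $\Phi^{*}$ is multiplicative for $\wedge$ while $\cal L_{\Phi_{*}\left(X\right)}$ and $\cal L_{X}$ obey the Leibniz rule, both $D_{1}$ and $D_{2}$ are derivations \emph{over} the algebra homomorphism $\Phi^{*}$, i.e.
\[
D_{i}\left(\alpha\wedge\beta\right)=D_{i}\left(\alpha\right)\wedge\Phi^{*}\left(\beta\right)+\Phi^{*}\left(\alpha\right)\wedge D_{i}\left(\beta\right)\qquad\left(i=1,2\right)\,\,.
\]
Moreover both commute with $\mbox{d}$, since $\cal L_{\bullet}\circ\mbox{d}=\mbox{d}\circ\cal L_{\bullet}$ and $\Phi^{*}\circ\mbox{d}=\mbox{d}\circ\Phi^{*}$ force $D_{i}\circ\mbox{d}=\mbox{d}\circ D_{i}$. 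On a generator $g\,\mbox{d}f$ of $\widehat{\Omega^{1}}$ the twisted Leibniz rule then gives
\[
D_{i}\left(g\,\mbox{d}f\right)=D_{i}\left(g\right)\wedge\Phi^{*}\left(\mbox{d}f\right)+\Phi^{*}\left(g\right)\wedge\mbox{d}\left(D_{i}\left(f\right)\right)\,\,,
\]
and because $D_{1}$ and $D_{2}$ already agree on the $0$-forms $f$ and $g$ while $\Phi^{*}$ is shared, the two sides coincide; $\ww C$-linearity and Krull-continuity then extend the equality to all of $\widehat{\Omega^{1}}$.

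Finally, every $p$-form is a Krull-convergent combination of wedges of $1$-forms, so an induction on $p$ using the same twisted Leibniz rule, with factors now $1$-forms on which $D_{1}=D_{2}$ is already known, closes the argument. I expect the only delicate points to be bookkeeping ones: tracking the \emph{twisted} Leibniz rule (rather than a naive derivation property, which fails here) and invoking Krull-continuity each time one passes from the generators $g\,\mbox{d}f$ and their wedges to arbitrary formal forms.
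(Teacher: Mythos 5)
Your proof is correct and follows essentially the same route as the paper, which (for this proposition and the preceding Proposition \ref{prop: exp }) sketches exactly the scheme you use: verify the identity on $0$-forms, pass to $1$-forms via commutation with $\mbox{d}$, and extend to $p$-forms through the wedge product, using $\Phi^{*}\circ\mbox{d}=\mbox{d}\circ\Phi^{*}$ and $\Phi^{*}\left(\alpha\wedge\beta\right)=\Phi^{*}\left(\alpha\right)\wedge\Phi^{*}\left(\beta\right)$. Your explicit formulation of the twisted Leibniz rule (both composites being derivations over the homomorphism $\Phi^{*}$) is a correct and welcome sharpening of what the paper leaves implicit in the phrase \emph{``the Leibniz formula''}.
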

\[
\xymatrix{\widehat{\Omega^{p}}\ar[r]^{\Phi^{*}}\ar[d]_{\cal L_{\Phi_{\text{*}}\left(X\right)}} & \widehat{\Omega^{p}}\ar[d]^{{\cal L}_{X}}\\
\widehat{\Omega^{p}}\ar[r]_{\Phi^{*}} & \widehat{\Omega^{p}}
}
\]

From now on, we set $n=3$, we denote by $\mathbf{x}=\left(x,\mathbf{y}\right)=\left(x,y_{1},y_{2}\right)$
the coordinates in $\ww C^{3}$ . 
\begin{defn}
We denote by $\ps{\mbox{d}x}$ the ideal generated by $\mbox{d}x$
in $\widehat{\Omega}=\widehat{\Omega}\left(\form{x,\mathbf{y}}\right)$:
it is the set of forms $\theta\in\widehat{\Omega}$ such that $\theta=\mbox{d}x\wedge\eta$,
for some $\eta\in\widehat{\Omega}$.\end{defn}
\begin{prop}
\label{prop: exp symplectic} Let $\theta\in\widehat{\Omega}$, $X\in\fvf$
and set $\Phi:=\exp\left(X\right)\in\diff{}$. Then the following
assertions are equivalent:
\begin{enumerate}
\item \emph{$\cal L_{X}\left(x\right)=0$ }and $\cal L_{X}\left(\theta\right)\in\ps{\mbox{d}x}$
\item $\Phi^{*}\left(x\right)=x$ and $\Phi^{*}\left(\theta\right)\in\theta+\ps{\mbox{d}x}$.
\end{enumerate}
\end{prop}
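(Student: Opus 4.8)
The whole argument rests on the exponential formula $\Phi^{*}=\exp\left(\cal L_{X}\right)=\sum_{k\geq0}\frac{1}{k!}\cal L_{X}^{\circ k}$ of Proposition~\ref{prop: exp }, read once on the $0$-form $x$ and once on the form $\theta$. Before splitting into the two implications I would isolate the single computation that makes everything work: \emph{if $\cal L_{X}\left(x\right)=0$ then the ideal $\ps{\mbox{d}x}$ is stable under $\cal L_{X}$.} This is immediate from the fact that $\cal L_{X}$ is a derivation commuting with $\mbox{d}$: for $\beta=\mbox{d}x\wedge\eta\in\ps{\mbox{d}x}$,
\[
\cal L_{X}\left(\beta\right)=\mbox{d}\left(\cal L_{X}\left(x\right)\right)\wedge\eta+\mbox{d}x\wedge\cal L_{X}\left(\eta\right)=\mbox{d}x\wedge\cal L_{X}\left(\eta\right)\in\ps{\mbox{d}x}.
\]
Together with the elementary order estimate $\cal L_{X}\left(\mathfrak{m}^{j}\right)\subset\mathfrak{m}^{j+\tx{ord}\left(X\right)-1}$ (and its analogue raising the order of forms by $\tx{ord}\left(X\right)-1$), this is all the machinery I expect to need.

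For the implication $(1)\Rightarrow(2)$ I would argue directly and in full generality. If $\cal L_{X}\left(x\right)=0$ then $\cal L_{X}^{\circ k}\left(x\right)=0$ for every $k\geq1$, so the exponential series collapses to $\Phi^{*}\left(x\right)=x$. If moreover $\cal L_{X}\left(\theta\right)\in\ps{\mbox{d}x}$, the stability property gives $\cal L_{X}^{\circ k}\left(\theta\right)=\cal L_{X}^{\circ\left(k-1\right)}\left(\cal L_{X}\left(\theta\right)\right)\in\ps{\mbox{d}x}$ for all $k\geq1$; hence $\Phi^{*}\left(\theta\right)-\theta=\sum_{k\geq1}\frac{1}{k!}\cal L_{X}^{\circ k}\left(\theta\right)$ is a Krull-convergent sum of elements of $\ps{\mbox{d}x}$. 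Since $\ps{\mbox{d}x}$ is closed in the Krull topology, $\Phi^{*}\left(\theta\right)-\theta\in\ps{\mbox{d}x}$, which is exactly $(2)$. Note that this direction uses nothing about the order of $X$.

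The reverse implication $(2)\Rightarrow(1)$ is where the real work lies, and I expect the passage from the time-$1$ flow to its infinitesimal generator to be the main obstacle. The first step is to recover $\cal L_{X}\left(x\right)=0$ from $\Phi^{*}\left(x\right)=x$: putting $g:=\cal L_{X}\left(x\right)\in\mathfrak{m}$, the hypothesis reads $g+\frac{1}{2!}\cal L_{X}\left(g\right)+\frac{1}{3!}\cal L_{X}^{\circ2}\left(g\right)+\dots=0$. When $\tx{ord}\left(X\right)\geq2$ every term past the first has order strictly larger than $\tx{ord}\left(g\right)$, so comparing lowest-order homogeneous parts forces $g=0$. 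This order-$\geq2$ assumption is precisely the setting in which the proposition is used, since the relevant $X$ arise as logarithms of diffeomorphisms tangent to the identity (Proposition~\ref{prop: exp is onto}); for a merely linear $X$ the conclusion can genuinely fail (e.g. $X=2\pi i\,x\pp x$ gives $\Phi^{*}=\tx{Id}$, hence $\Phi^{*}\left(x\right)=x$, while $\cal L_{X}\left(x\right)\neq0$), which is why some control of $\tx{ord}\left(X\right)$ is unavoidable here.

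Once $\cal L_{X}\left(x\right)=0$ is established, the stability property of the first paragraph becomes available, and I would finish by an inversion argument. Factor $\Phi^{*}-\tx{Id}=\sum_{k\geq1}\frac{1}{k!}\cal L_{X}^{\circ k}=U\circ\cal L_{X}$, where $U:=\sum_{j\geq0}\frac{1}{\left(j+1\right)!}\cal L_{X}^{\circ j}$, so that hypothesis $(2)$ reads $U\left(\cal L_{X}\left(\theta\right)\right)=\Phi^{*}\left(\theta\right)-\theta\in\ps{\mbox{d}x}$. Since $\cal L_{X}$ raises order (as $\tx{ord}\left(X\right)\geq2$) and preserves $\ps{\mbox{d}x}$, the operator $U=\tx{Id}+\left(\text{order-raising part}\right)$ is invertible in the Krull topology and satisfies $U\left(\ps{\mbox{d}x}\right)=\ps{\mbox{d}x}$; applying $U^{-1}$ then yields $\cal L_{X}\left(\theta\right)\in\ps{\mbox{d}x}$, completing $(1)$. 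The only delicate points throughout are the two order-based extractions in the reverse direction; the rest is formal manipulation of the exponential series together with the derivation/Krull-closedness properties already recorded in the excerpt.
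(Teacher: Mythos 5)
Your proof is correct, and it follows the route the paper merely gestures at --- reading the exponential formula $\Phi^{*}=\exp\left(\cal L_{X}\right)$ of Proposition \ref{prop: exp } once on $x$ and once on $\theta$ --- but since the paper's entire proof is the one-line citation of Propositions \ref{prop: exp } and \ref{prop: exp (ad)}, all the substance here is yours: the stability of $\ps{\tx dx}$ under $\cal L_{X}$ when $\cal L_{X}\left(x\right)=0$, the Krull-closedness of $\ps{\tx dx}$ used for $(1)\Rightarrow(2)$, and the lowest-order comparison together with the inversion of $U=\tx{Id}+N$ (with $N$ order-raising) for $(2)\Rightarrow(1)$. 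The genuinely valuable addition is your counterexample: the proposition is stated for $X\in\fvf=\cal D^{\left(1\right)}$, yet for $X=2\pi i\,x\pp x$ one has $\exp\left(X\right)=\tx{Id}$, so assertion $(2)$ holds for every $\theta$ while $\cal L_{X}\left(x\right)=2\pi i\,x\neq0$; hence $(2)\Rightarrow(1)$ is false at the stated level of generality, and your hypothesis $\tx{ord}\left(X\right)\geq2$ (equivalently $X\in\cal D^{\left(2\right)}$) is a necessary correction to the statement, not a convenience. This does not damage the paper: in the proofs of Theorems \ref{Th: Th drsn} and \ref{thm: Th ham}, via Proposition \ref{prop: expo de champ transv hamilt}, the result is only ever applied to monomial vector fields $x^{i_{0}}y_{1}^{i_{1}}y_{2}^{i_{2}}S\left(0,\eta_{1,\mathbf{i}},\eta_{2,\mathbf{i}}\right)$ with $\abs{\mathbf{i}}\geq1$, which have order at least $2$, and moreover only in the direction $(1)\Rightarrow(2)$, which as you observe needs no order restriction at all. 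Your order-comparison step is also exactly the argument the paper itself uses in the proof of Proposition \ref{prop: conjuation with resonant monomial} (via Lemma \ref{lem: order of Lie bracket} and $\tx{ord}\left(F\right)\geq2$), so the repair is entirely in the spirit of the text; the proposition should simply be restated with that hypothesis.
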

\begin{proof}
It is just a consequence of Propositions \ref{prop: exp } and \ref{prop: exp (ad)}.
\end{proof}
The next Lemma is proved by induction, as Lemma \ref{lem: diffeo formel infinite composition}.
\begin{lem}
\label{lem: suite de diffeo symplectique}In the situation described
in Lemma \ref{lem: diffeo formel infinite composition}, if we further
assume that there exists a form $\theta\in\widehat{\Omega}$ such
that $\Phi_{n}^{*}\left(\theta\right)\in\theta+\ps{\mbox{d}x}$, for
all $n\geq0$, then $\Phi^{*}\left(\theta\right)\in\theta+\ps{\mbox{d}x}$.
\end{lem}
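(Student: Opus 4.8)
The plan is to follow the same inductive scheme as in Lemma~\ref{lem: diffeo formel infinite composition}, and then to close the argument by a continuity/closedness argument for the Krull topology. Throughout I write $\Phi^{\cro n}:=\Phi_{n}\circ\dots\circ\Phi_{0}$, so that $\Phi=\lim_{n}\Phi^{\cro n}$ in $\diff{}$. The first thing I would record is that the ideal $\ps{\mbox{d}x}$ is preserved by each pull-back $\left(\Phi^{\cro n}\right)^{*}$. This rests on fiberedness: in the situation at hand the $\Phi_{n}$ satisfy $\Phi_{n}^{*}\left(x\right)=x$ (this is the companion condition to $\Phi_{n}^{*}\left(\theta\right)\in\theta+\ps{\mbox{d}x}$ in Proposition~\ref{prop: exp symplectic}), hence so does every partial composition $\Phi^{\cro n}$ and the limit $\Phi$. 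Consequently $\left(\Phi^{\cro n}\right)^{*}\left(\mbox{d}x\right)=\mbox{d}\big(\left(\Phi^{\cro n}\right)^{*}\left(x\right)\big)=\mbox{d}x$, and since $\left(\Phi^{\cro n}\right)^{*}$ respects $\wedge$, any $\alpha=\mbox{d}x\wedge\eta$ is sent to $\mbox{d}x\wedge\left(\Phi^{\cro n}\right)^{*}\left(\eta\right)\in\ps{\mbox{d}x}$.

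Next I would prove by induction on $n\geq0$ that $\left(\Phi^{\cro n}\right)^{*}\left(\theta\right)\in\theta+\ps{\mbox{d}x}$. The case $n=0$ is exactly the hypothesis $\Phi_{0}^{*}\left(\theta\right)\in\theta+\ps{\mbox{d}x}$. For the inductive step I use the contravariance of the pull-back together with $\Phi^{\cro n}=\Phi_{n}\circ\Phi^{\cro{n-1}}$, which gives $\left(\Phi^{\cro n}\right)^{*}=\left(\Phi^{\cro{n-1}}\right)^{*}\circ\Phi_{n}^{*}$. Writing $\Phi_{n}^{*}\left(\theta\right)=\theta+\beta_{n}$ with $\beta_{n}\in\ps{\mbox{d}x}$, I obtain
\[
\left(\Phi^{\cro n}\right)^{*}\left(\theta\right)=\left(\Phi^{\cro{n-1}}\right)^{*}\left(\theta\right)+\left(\Phi^{\cro{n-1}}\right)^{*}\left(\beta_{n}\right).
\]
By the induction hypothesis the first summand lies in $\theta+\ps{\mbox{d}x}$, and by the first paragraph the second summand lies in $\ps{\mbox{d}x}$; hence $\left(\Phi^{\cro n}\right)^{*}\left(\theta\right)\in\theta+\ps{\mbox{d}x}$, completing the induction.

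Finally I would pass to the Krull limit. Because $\Phi^{\cro n}\to\Phi$ and the pull-back of a \emph{fixed} form depends Krull-continuously on the diffeomorphism (substitution and differentiation of the components are Krull-continuous operations), one has $\left(\Phi^{\cro n}\right)^{*}\left(\theta\right)\to\Phi^{*}\left(\theta\right)$. It then suffices to observe that $\theta+\ps{\mbox{d}x}$ is Krull-closed: writing any form uniquely as $\mbox{d}x\wedge\eta+\rho$ with $\rho$ free of $\mbox{d}x$, membership in $\ps{\mbox{d}x}$ is the vanishing of $\rho$, a condition checked order by order and therefore stable under Krull limits. Applying this to the convergent sequence $\left(\Phi^{\cro n}\right)^{*}\left(\theta\right)-\theta\in\ps{\mbox{d}x}$ yields $\Phi^{*}\left(\theta\right)-\theta\in\ps{\mbox{d}x}$, which is the claim.

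The only genuinely delicate point is the interchange of the pull-back with the infinite composition: I would handle it not by any quantitative estimate but by keeping the desired property \emph{exact} at every finite stage (the induction) and then invoking the closedness of $\ps{\mbox{d}x}$. The place where care is truly needed is the preservation of $\ps{\mbox{d}x}$ under $\left(\Phi^{\cro n}\right)^{*}$, which is exactly where fiberedness $\Phi_{n}^{*}\left(x\right)=x$ enters; without it $\left(\Phi^{\cro n}\right)^{*}\left(\mbox{d}x\right)$ could acquire $\mbox{d}y_{i}$-components and the inductive step would fail.
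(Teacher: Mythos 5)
Your proof is correct and is essentially the paper's own: the paper disposes of this lemma in one line (it ``is proved by induction, as Lemma~\ref{lem: diffeo formel infinite composition}''), and your argument---induction showing $\left(\Phi^{\cro n}\right)^{*}\left(\theta\right)\in\theta+\ps{\mbox{d}x}$ for every partial composition, then passage to the Krull limit using continuity of the pull-back on a fixed form and the Krull-closedness of $\theta+\ps{\mbox{d}x}$---is exactly that induction with the omitted details supplied. Your final caveat is also well placed: fiberedness of the $\Phi_{n}$ (so that $\Phi_{n}^{*}\left(\mbox{d}x\right)=\mbox{d}x$ and hence $\left(\Phi^{\cro n}\right)^{*}$ preserves $\ps{\mbox{d}x}$) is genuinely needed---without it one can build counterexamples, so the statement would be false---and although it is not written explicitly in the lemma, it is implicit in ``the situation described in Lemma~\ref{lem: diffeo formel infinite composition}'' as the lemma is used in the paper, where every $\Phi_{n}$ is the exponential of a fibered (indeed transversally Hamiltonian) vector field.
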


\subsection{\label{sub: transversally sn}Transversal Hamiltonian vector fields
and transversal symplectomorphisms}

~

We will need in fact to deal with forms with rational coefficients,
and more precisely with 
\begin{eqnarray*}
\omega & := & \frac{\mbox{d}y_{1}\wedge\mbox{d}y_{2}}{x}~.
\end{eqnarray*}
Given a formal vector field $X$ such that $\cal L_{X}\left(x\right)\in\ps x$
we can easily extend its Lie derivative action to the set ${\displaystyle x^{-1}\widehat{\Omega}\left(\form{x,\mathbf{y}}\right)}$
by setting: 
\begin{eqnarray*}
\cal L_{X}\left(\frac{1}{x}\theta\right) & := & -\frac{\cal L_{X}\left(x\right)}{x^{2}}\theta+\frac{1}{x}\cal L_{X}\left(\theta\right)\,,\,\theta\in\widehat{\Omega}\left(\form{x,\mathbf{y}}\right)\\
 & \in & x^{-1}\widehat{\Omega}\left(\form{x,\mathbf{y}}\right)\mbox{ , because }\cal L_{X}\left(x\right)\in\ps x\qquad.
\end{eqnarray*}
In particular we have
\[
x\cal L_{X}\left(\frac{1}{x}\theta\right)\in\widehat{\Omega}\left(\form{x,\mathbf{y}}\right)\quad.
\]
Notice that if a vector field $X$ satisfy $\cal L_{X}\left(\mbox{d}x\right)\in\ps{\mbox{d}x}$,
then $\cal L_{X}\left(x\right)\in\ps x$. Similarly, we naturally
extend the action of fibered diffeomorphisms by pull-back on rational
forms in $x^{-1}\widehat{\Omega}\left(\form{x,\mathbf{y}}\right)$
by: 
\[
\Phi^{\text{*}}\left(\frac{1}{x}\theta\right)=\frac{1}{x}\Phi^{*}\left(\theta\right),\,\mbox{for }\left(\Phi,\theta\right)\in\fdiff\times\widehat{\Omega}\left(\form{x,\mathbf{y}}\right)
\]
so that 
\[
x\Phi^{\text{*}}\left(\frac{1}{x}\theta\right)=\Phi^{*}\left(\theta\right)\qquad.
\]

Recalling Definition \ref{def: intro}, we can now state a result
analogous to Proposition \ref{prop: exp symplectic}.
\begin{prop}
\label{prop: expo de champ transv hamilt}Let $F\in\fvf$ be a singular
vector field. The following two statements are equivalent:
\begin{enumerate}
\item $\exp\left(F\right)$ is a transversally symplectic diffeomorphisms
,
\item $\cal L_{F}\left(x\right)=0$ and $F$ is transversally Hamiltonian.
\end{enumerate}
\end{prop}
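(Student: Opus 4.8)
The plan is to reduce the statement to Proposition~\ref{prop: exp symplectic} by clearing the denominator in $\omega = \frac{\tx dy_1 \wedge \tx dy_2}{x}$. Concretely, I would introduce the genuine formal $2$-form $\eta := \tx dy_1 \wedge \tx dy_2 \in \widehat{\Omega}$, so that $\omega = \frac1x\eta$ and $x\omega = \eta$. Writing $\Phi := \exp\left(F\right)$, the whole argument rests on two elementary identities relating the rationally-extended operators (defined just above in subsection~\ref{sub: transversally sn}) to their ordinary counterparts on $\widehat{\Omega}$: first, applying the Leibniz rule to $\eta = x\,\omega$ gives $\cal L_F\left(\eta\right) = \cal L_F\left(x\right)\omega + x\,\cal L_F\left(\omega\right)$, so that $x\,\cal L_F\left(\omega\right) = \cal L_F\left(\eta\right)$ as soon as $\cal L_F\left(x\right)=0$; second, whenever $\Phi$ is fibered (equivalently $\Phi^{*}\left(x\right)=x$), the definition of the extended pull-back yields $x\,\Phi^{*}\left(\omega\right) = \Phi^{*}\left(\eta\right)$, while $x\omega = \eta$.

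With these identities in hand I would translate both statements of the Proposition into conditions bearing on the honest form $\eta \in \widehat{\Omega}$. On the Hamiltonian side, assume $\cal L_F\left(x\right)=0$; then $\cal L_F\left(\tx dx\right) = \tx d\,\cal L_F\left(x\right) = 0 \in \ps{\tx dx}$ automatically, using $\cal L_F \circ \tx d = \tx d \circ \cal L_F$, so the only remaining transversal-Hamiltonian requirement $x\,\cal L_F\left(\omega\right) \in \ps{\tx dx}$ becomes exactly $\cal L_F\left(\eta\right) \in \ps{\tx dx}$. Hence statement $\left(2\right)$ is equivalent to $\cal L_F\left(x\right)=0$ together with $\cal L_F\left(\eta\right) \in \ps{\tx dx}$. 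On the symplectic side, statement $\left(1\right)$ already requires $\Phi^{*}\left(x\right)=x$ (which forces $\Phi \in \fdiff$, so the extended pull-back applies); conditional on this, the remaining requirement $x\,\Phi^{*}\left(\omega\right) \in x\omega + \ps{\tx dx}$ reads $\Phi^{*}\left(\eta\right) \in \eta + \ps{\tx dx}$. Thus statement $\left(1\right)$ is equivalent to $\Phi^{*}\left(x\right)=x$ together with $\Phi^{*}\left(\eta\right) \in \eta + \ps{\tx dx}$. In each case the biconditional is clean: if the first condition ($\cal L_F\left(x\right)=0$, resp.\ $\Phi^{*}\left(x\right)=x$) holds, the two versions of the second condition coincide by the identities above, and if it fails both composite statements are false.

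These two reformulations are precisely assertions $\left(1\right)$ and $\left(2\right)$ of Proposition~\ref{prop: exp symplectic} applied with $X = F$ and $\theta = \eta \in \widehat{\Omega}$, which gives the desired equivalence. The only point requiring genuine care — and the step I would treat as the main obstacle — is the bookkeeping of the rationally-extended Lie derivative and pull-back: one must check that multiplication by $x$ really does intertwine these extended operators on $x^{-1}\widehat{\Omega}$ with the ordinary ones on $\widehat{\Omega}$, and that this intertwining is legitimate exactly under the hypotheses $\cal L_F\left(x\right)=0$ (for the Lie derivative) and $\Phi^{*}\left(x\right)=x$ (for the pull-back). Since Proposition~\ref{prop: exp symplectic} itself guarantees the equivalence $\cal L_F\left(x\right)=0 \Leftrightarrow \Phi^{*}\left(x\right)=x$, these two hypotheses become available simultaneously, and the reduction goes through with no circularity.
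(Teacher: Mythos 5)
Your proposal is correct and is essentially the paper's own proof: the paper disposes of this statement in one line (``it is just a consequence of Proposition \ref{prop: exp symplectic}''), and your argument is exactly that reduction, with the bookkeeping made explicit --- clearing the denominator via $\eta = x\omega = \tx dy_1\wedge \tx dy_2$, noting $\cal L_F(\tx dx)=\tx d\,\cal L_F(x)=0$ under $\cal L_F(x)=0$, checking that multiplication by $x$ intertwines the extended Lie derivative and pull-back with the ordinary ones, and then invoking Proposition \ref{prop: exp symplectic} with $\theta=\eta$.
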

\begin{proof}
It is just a consequence of Proposition \ref{prop: exp symplectic}.\end{proof}
\begin{lem}
\label{lem: push forward symplectic fibr=0000E9}Let $\Phi\in\sdiff$
and $X\in\cal D{}_{\omega}$ . Then, $\Phi_{*}\left(X\right)\in\cal D{}_{\omega}$. 
\begin{proof}
This comes from Proposition \ref{prop: lien diffeo, chp vect et form}:
$\Phi^{*}\left(\cal L_{\Phi_{*}\left(X\right)}\left(\omega\right)\right)=\cal L_{X}\left(\Phi^{*}\left(\omega\right)\right)$,
and from the fact that $\sdiff$ is a group, so $\Phi^{-1}\in\sdiff$
. Consequently we have: 
\begin{eqnarray*}
x\cal L_{\Phi_{*}\left(X\right)}\left(\omega\right) & = & x\left(\Phi^{-1}\right)^{*}\cal L_{X}\left(\Phi^{*}\left(\omega\right)\right)\\
 & = & x\left(\Phi^{-1}\right)^{*}\cal L_{X}\left(\omega+\ps{\mbox{d}x}\right)\\
 & = & x\left(\Phi^{-1}\right)^{*}\left(\cal L_{X}\left(\omega\right)\right)+x\left(\Phi^{-1}\right)^{*}\left(\cal L_{X}\left(\ps{\tx dx}\right)\right)\\
 & = & x\left(\Phi^{-1}\right)^{*}\left(\ps{\tx dx}\right)+x\left(\Phi^{-1}\right)^{*}\left(\ps{\tx dx}\right)\\
 & \in & \ps{\mbox{d}x}\,\,\,.
\end{eqnarray*}

\end{proof}
\end{lem}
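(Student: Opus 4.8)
The plan is to check directly that $\Phi_{*}\left(X\right)$ meets the two requirements of Definition \ref{def: intro} defining $\svf$, namely $\cal L_{\Phi_{*}\left(X\right)}\left(\tx dx\right)\in\ps{\tx dx}$ and $x\cal L_{\Phi_{*}\left(X\right)}\left(\omega\right)\in\ps{\tx dx}$. The engine driving both verifications is Proposition \ref{prop: lien diffeo, chp vect et form}, which gives $\Phi^{*}\left(\cal L_{\Phi_{*}\left(X\right)}\left(\theta\right)\right)=\cal L_{X}\left(\Phi^{*}\left(\theta\right)\right)$ for every (possibly rational) form $\theta$, combined with the fact that $\sdiff$ is a group, so that $\Phi^{-1}\in\sdiff$ is again fibered and transversally symplectic. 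The general strategy is to transport everything through $\Phi^{*}$, run the computation for $X$ where the hypotheses apply, and transport the result back through $\left(\Phi^{-1}\right)^{*}$.

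First I would settle the $\tx dx$-condition. Evaluating the identity above on the $0$-form $x$ gives $\Phi^{*}\left(\cal L_{\Phi_{*}\left(X\right)}\left(x\right)\right)=\cal L_{X}\left(\Phi^{*}\left(x\right)\right)=\cal L_{X}\left(x\right)$, using $\Phi^{*}\left(x\right)=x$. Since $X$ is transversally Hamiltonian and singular, $\cal L_{X}\left(x\right)$ depends only on $x$ and lies in $\ps x$. As $\Phi^{-1}$ is fibered, $\left(\Phi^{-1}\right)^{*}$ fixes every function of $x$ alone, so $\cal L_{\Phi_{*}\left(X\right)}\left(x\right)=\left(\Phi^{-1}\right)^{*}\left(\cal L_{X}\left(x\right)\right)$ again depends only on $x$; hence $\cal L_{\Phi_{*}\left(X\right)}\left(\tx dx\right)=\tx d\left(\cal L_{\Phi_{*}\left(X\right)}\left(x\right)\right)\in\ps{\tx dx}$.

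The core of the argument is the $\omega$-condition. Transversal symplecticity of $\Phi$ means $x\Phi^{*}\left(\omega\right)\in x\omega+\ps{\tx dx}$, so I may write $\Phi^{*}\left(\omega\right)=\omega+x^{-1}\beta$ with $\beta\in\ps{\tx dx}$. Applying $\cal L_{X}$ (extended to $x^{-1}\widehat{\Omega}$ as in Subsection \ref{sub: transversally sn}) gives $\cal L_{X}\left(\Phi^{*}\left(\omega\right)\right)=\cal L_{X}\left(\omega\right)+\cal L_{X}\left(x^{-1}\beta\right)$. The first summand lies in $x^{-1}\ps{\tx dx}$ because $X$ is transversally Hamiltonian. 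For the second, the product rule for the rational Lie derivative, the membership $\cal L_{X}\left(x\right)\in\ps x$, and the fact that $\cal L_{X}$ is a derivation with $\cal L_{X}\left(\tx dx\right)\in\ps{\tx dx}$ — hence preserves the ideal $\ps{\tx dx}$ — together give $\cal L_{X}\left(x^{-1}\beta\right)\in x^{-1}\ps{\tx dx}$. Finally, reading Proposition \ref{prop: lien diffeo, chp vect et form} as $\cal L_{\Phi_{*}\left(X\right)}\left(\omega\right)=\left(\Phi^{-1}\right)^{*}\left(\cal L_{X}\left(\Phi^{*}\left(\omega\right)\right)\right)$ and using that the fibered diffeomorphism $\Phi^{-1}$ fixes both $x^{-1}$ and $\tx dx$ — and thus preserves $x^{-1}\ps{\tx dx}$ — I conclude $x\cal L_{\Phi_{*}\left(X\right)}\left(\omega\right)\in\ps{\tx dx}$, as required.

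The step I expect to be the main obstacle is the bookkeeping of the powers of $x$ through the rational Lie derivative, that is, confirming that each of the operations $\cal L_{X}$, $\Phi^{*}$ and $\left(\Phi^{-1}\right)^{*}$ sends the module $x^{-1}\ps{\tx dx}$ into itself. The clean way to organize this is to isolate the structural fact that $\ps{\tx dx}$ is a differential ideal left invariant both by the Lie derivative along any transversally Hamiltonian field and by pull-back under any fibered diffeomorphism; once this invariance is recorded, the two verifications above reduce to short ideal-membership checks.
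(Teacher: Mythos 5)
Your proof is correct and takes essentially the same route as the paper: transport the Lie derivative through $\Phi^{*}$ using Proposition \ref{prop: lien diffeo, chp vect et form}, use that $\Phi^{-1}$ is again fibered (indeed in $\sdiff$), and conclude by ideal-membership in $\ps{\tx dx}$. The differences are in your favor: you also verify the condition $\cal L_{\Phi_{*}\left(X\right)}\left(\tx dx\right)\in\ps{\tx dx}$, which the paper's proof leaves implicit, and your explicit bookkeeping of the module $x^{-1}\ps{\tx dx}$ makes rigorous what the paper writes loosely as $\cal L_{X}\left(\omega+\ps{\tx dx}\right)$ and $x\left(\Phi^{-1}\right)^{*}\left(\ps{\tx dx}\right)$.
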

\begin{rem}
\label{rem: notation saddle node symplectic}In other words, we have
an action of the group $\sdiff$ on $\cal D{}_{\omega}$, and then
on $\sns$.
\end{rem}
We would like now to give a characterization of transversally Hamiltonian
vector fields in terms of its monomial expansion (see Remark \ref{rem: monomial exp}).
Consider a monomial vector field
\begin{eqnarray*}
X & = & x^{k_{0}}y_{1}^{k_{1}}y_{2}^{k_{2}}S\left(\mu\right)\,\,\,\,\,,
\end{eqnarray*}
with $\mu=\left(\mu_{0},\mu_{1},\mu_{2}\right)\in\ww C^{3}\backslash\acc 0$,
such that $\cal L_{X}\left(x\right)\in\ps x$. We necessarily have
either $\mu_{0}=0$ or $k_{0}\geq0$. Let us compute its Lie derivative
applied to $\omega$:
\begin{eqnarray*}
\cal L_{X}\left(\omega\right) & = & -\frac{\cal L_{X}\left(x\right)}{x^{2}}\mbox{d}y_{1}\wedge\mbox{d}y_{2}+\frac{1}{x}\mbox{d}\left(\cal L_{X}\left(y_{1}\right)\right)\wedge\mbox{d}y_{2}+\frac{1}{x}\mbox{d}y_{1}\wedge\mbox{d}\left(\cal L_{X}\left(y_{2}\right)\right)\\
 & = & -\mu_{0}x^{k_{0}-1}y_{1}^{k_{1}}y_{2}^{k_{2}}\mbox{d}y_{1}\wedge\mbox{d}y_{2}+\frac{\mu_{1}}{x}\mbox{d}\left(x^{k_{0}}y_{1}^{k_{1}+1}y_{2}^{k_{2}}\right)\wedge\mbox{d}y_{2}\\
 &  & +\frac{\mu_{2}}{x}\mbox{d}y_{1}\wedge\mbox{d}\left(x^{k_{0}}y_{1}^{k_{1}}y_{2}^{k_{2}+1}\right)\\
 & = & \left(\mu_{1}\left(k_{1}+1\right)+\mu_{2}\left(k_{2}+1\right)-\mu_{0}\right)x^{k_{0}-1}y_{1}^{k_{1}}y_{2}^{k_{2}}\mbox{d}y_{1}\wedge\mbox{d}y_{2}+\ps{\mbox{d}x}\quad.
\end{eqnarray*}
Moreover: 
\begin{eqnarray*}
\cal L_{X}\left(\mbox{d}x\right) & = & \mbox{d}\left(\cal L_{X}\left(x\right)\right)\\
 & = & \mbox{d}\left(\mu_{0}x^{k_{0}+1}y_{1}^{k_{1}}y_{2}^{k_{2}}\right)\\
 & = & \mu_{0}\left(\left(k_{0}+1\right)x^{k_{0}}y_{1}^{k_{1}}y_{2}^{k_{2}}\mbox{d}x+k_{1}x^{k_{0}+1}y_{1}^{k_{1}-1}y_{2}^{k_{2}}\mbox{d}y_{1}+k_{2}x^{k_{0}+1}y_{1}^{k_{1}}y_{2}^{k_{2}-1}\mbox{d}y_{2}\right)\quad.
\end{eqnarray*}
 Thus, we see that $X$ is transversally Hamiltonian if and only if
the following two conditions hold:
\begin{enumerate}
\item $\mu_{1}\left(k_{1}+1\right)+\mu_{2}\left(k_{2}+1\right)=\mu_{0}$ 
\item either $\mu_{0}=0$ or $k_{1}=k_{2}=0$. 
\end{enumerate}
So we have the following:
\begin{prop}
Let $X\in\fvf$ be a singular vector field and let 
\[
X=\sum_{\mathbf{k}\in\cal I}\mathbf{x}^{\mathbf{k}}S\left(\mu_{\mathbf{k}}\right)
\]
be its monomial expansion. $X$ is transversally Hamiltonian if and
only if for all $\mathbf{k}\in\cal I$, $\mathbf{x}^{\mathbf{k}}S\left(\mu_{\mathbf{k}}\right)$
is transversally Hamiltonian. \end{prop}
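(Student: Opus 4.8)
The plan is to reduce the statement to a linear-independence argument resting on the explicit computation carried out just before the statement for a single monomial vector field. First I would record that the two maps $X\mapsto\cal L_X\left(\mbox{d}x\right)$ and $X\mapsto x\cal L_X\left(\omega\right)$ are $\ww C$-linear and continuous for the Krull topology, and that $\ps{\mbox{d}x}$ is a Krull-closed $\form{x,\mathbf{y}}$-submodule of $\widehat{\Omega}$. Granting this, the ``if'' direction is immediate: if every $\mathbf{x}^{\mathbf{k}}S\left(\mu_{\mathbf{k}}\right)$ is transversally Hamiltonian, then each term of the Krull-convergent monomial expansion contributes an element of $\ps{\mbox{d}x}$ to both $\cal L_X\left(\mbox{d}x\right)$ and $x\cal L_X\left(\omega\right)$, and passing to the limit keeps us inside the closed submodule $\ps{\mbox{d}x}$. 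The whole content therefore lies in the converse.

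For the converse, I would translate membership in $\ps{\mbox{d}x}$ into the vanishing of coordinates. A $1$-form lies in $\ps{\mbox{d}x}$ exactly when its $\mbox{d}y_1$- and $\mbox{d}y_2$-components vanish, and a $2$-form lies in $\ps{\mbox{d}x}$ exactly when its $\mbox{d}y_1\wedge\mbox{d}y_2$-component vanishes. Thus $X$ is transversally Hamiltonian if and only if three formal series vanish: the $\mbox{d}y_1$- and $\mbox{d}y_2$-components $A,B$ of $\cal L_X\left(\mbox{d}x\right)$, and the $\mbox{d}y_1\wedge\mbox{d}y_2$-component $C$ of $x\cal L_X\left(\omega\right)$. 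By linearity $A=\sum_{\mathbf{k}}A_{\mathbf{k}}$, and likewise for $B$ and $C$, where, reading off the two displayed computations preceding the statement, $A_{\mathbf{k}}=\mu_0 k_1\,x^{k_0+1}y_1^{k_1-1}y_2^{k_2}$, $B_{\mathbf{k}}=\mu_0 k_2\,x^{k_0+1}y_1^{k_1}y_2^{k_2-1}$ and $C_{\mathbf{k}}=\left(\mu_1\left(k_1+1\right)+\mu_2\left(k_2+1\right)-\mu_0\right)x^{k_0}y_1^{k_1}y_2^{k_2}$, writing $\mu_{\mathbf{k}}=\left(\mu_0,\mu_1,\mu_2\right)$. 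That same computation already shows that $A_{\mathbf{k}}=B_{\mathbf{k}}=C_{\mathbf{k}}=0$ is precisely the condition for the monomial $\mathbf{x}^{\mathbf{k}}S\left(\mu_{\mathbf{k}}\right)$ to be transversally Hamiltonian.

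The key step is then linear independence: within each of the three sums the monomial attached to the index $\mathbf{k}$ is obtained from $\mathbf{k}$ by an injective shift of exponents — namely $\mathbf{k}\mapsto\left(k_0+1,k_1-1,k_2\right)$ for $A$, $\mathbf{k}\mapsto\left(k_0+1,k_1,k_2-1\right)$ for $B$, and the identity for $C$. Hence distinct indices feed distinct, linearly independent monomials, so no cancellation between different $\mathbf{k}$ is possible. Consequently $A=0$ forces every $A_{\mathbf{k}}=0$, and similarly for $B$ and $C$; thus $A=B=C=0$ if and only if $A_{\mathbf{k}}=B_{\mathbf{k}}=C_{\mathbf{k}}=0$ for all $\mathbf{k}$, which is exactly the assertion.

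The one point requiring care — and the place I expect to spend the effort — is the bookkeeping around the ``boundary'' indices of $\cal I$, where some $k_j=-1$ and the corresponding forms are genuinely rational. There one checks that the constraint on $\mu_{\mathbf{k}}$ built into the monomial expansion (namely $\mu_{\mathbf{k}}$ supported on the $j$-th slot when $k_j=-1$) makes $A_{\mathbf{k}}$ and $B_{\mathbf{k}}$ vanish automatically, so that only genuine polynomial monomials occur in $A$ and $B$, while in $C$ the rational monomials $x^{-1}y_1^{k_1}y_2^{k_2}$ can appear but remain linearly independent over $\ww C$. Once this is checked the three shift maps stay injective and the linear-independence argument applies verbatim; apart from this, the result is a direct consequence of the preceding monomial computation together with the Krull-continuity of $\cal L_X$.
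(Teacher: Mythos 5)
Your proposal is correct and follows essentially the same route as the paper: both rest on the single-monomial computation displayed just before the statement, with the ``if'' direction handled by Krull convergence and the ``only if'' direction by the observation that distinct indices $\mathbf{k}$ feed distinct monomials, so no cancellation can occur (the paper phrases this as a contradiction at a minimal value of $\abs{\mathbf{k}}$, you phrase it as global linear independence via injective exponent shifts --- the same idea, made slightly more explicit).

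One correction to your boundary bookkeeping: for $\mathbf{k}=\left(-1,k_{1},k_{2}\right)$, i.e. monomials of the form $\mu_{0}y_{1}^{k_{1}}y_{2}^{k_{2}}\pp x$, the constraint on $\mu_{\mathbf{k}}$ forces $\mu_{1}=\mu_{2}=0$ but \emph{not} $\mu_{0}=0$, so $A_{\mathbf{k}}$ and $B_{\mathbf{k}}$ do not vanish automatically in this case. They are nevertheless genuine polynomial monomials, since the exponent of $x$ is $k_{0}+1=0$ (and the factor $k_{1}$, resp. $k_{2}$, kills the term whenever $y_{1}$, resp. $y_{2}$, would appear with exponent $-1$), so the shift maps remain injective and your linear-independence argument goes through unchanged; only the stated justification for this step needs fixing.
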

\begin{proof}
Clearly if $\mathbf{x}^{\mathbf{k}}S\left(\mu_{\mathbf{k}}\right)$
is transversally Hamiltonian for all $\mathbf{k}\in\cal I$, then
$X$ is transversally Hamiltonian is obvious, by convergence of the
above series in the Krull topology. Assume conversely that $X$ is
transversally Hamiltonian. First of all, notice that we necessarily
have, for all $\mathbf{k}\in\cal I$, $\cal L_{\mathbf{x}^{\mathbf{k}}S\left(\mu_{\mathbf{k}}\right)}\left(\mbox{d}x\right)\in\ps{\mbox{d}x}$.
Indeed, if it were not the case, consider $\mathbf{k}$ with $\abs{\mathbf{k}}$
minimum among the set of multi-index $\mathbf{l}$ satisfying 
\[
\cal L_{x^{l}S\left(\mu_{\mathbf{l}}\right)}\left(\mbox{d}x\right)\notin\ps{\mbox{d}x}
\]
 to obtain a contradiction, by looking at the terms of higher order.
Similarly, according to the computation above, for each $\mathbf{k}\in\cal I$:
\begin{eqnarray*}
\cal L_{\mathbf{x}^{\mathbf{k}}S\left(\mu_{\mathbf{k}}\right)}\left(\omega\right) & = & \left(\mu_{1}\left(k_{1}+1\right)+\mu_{2}\left(k_{2}+1\right)-\mu_{0}\right)x^{k_{0}-1}y_{1}^{k_{1}}y_{2}^{k_{2}}\mbox{d}y_{1}\wedge\mbox{d}y_{2}+\ps{\mbox{d}x}
\end{eqnarray*}
If one of the two conditions $1.$ or $2.$ above were not satisfied
by a couple $\left(\mathbf{k},\mu_{\mathbf{k}}\right)$ with \textbf{$\abs{\mathbf{k}}$
}minimal, then we could not have $x\cal L_{X}\left(\omega\right)\in\ps{\mbox{d}x}$
(just consider the terms of higher order).
\end{proof}

\section{Formal classification under fibered transformations}

\subsection{\label{sub: non degenerate invariant well defined}Invariance of
the residue by fibered conjugacy}

~

We start this section by proving that the non-degenerate condition
defined in the introduction only depends on the conjugacy class of
the vector field under the action of fibered diffeomorphisms. More
precisely, the following proposition states that the residue is an
invariant of a doubly-resonant saddle-node under the action of $\fdiff$.
\begin{prop}
Let $X,Y\in\sn$. If $X$ and $Y$ are $\fdiff-$conjugate, then $\res X=\res Y$.\end{prop}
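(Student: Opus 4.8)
The plan is to show that the residue $\res Y = \bigl(\tfrac{\operatorname{Tr}(\mathbf{A}(x))}{x}\bigr)_{\mid x=0}$ can be read off from a coordinate-free quantity attached to $Y$, so that its invariance under $\fdiff$-conjugacy becomes automatic. The natural candidate is the divergence of $Y$ with respect to a suitable volume form, restricted to the central manifold $\{y_1=y_2=0\}$ (the $x$-axis), which is invariant for any $Y\in\sn$. The first step is to compute $\operatorname{div}(Y)$ for $Y$ written as in equation $\left(\mbox{\ref{eq: system doubly resonant saddle node}}\right)$: since $Y = x^2\pp x + (\alpha_1 + (\mathbf{A}\mathbf{y})_1 + f_1)\pp{y_1} + (\alpha_2 + (\mathbf{A}\mathbf{y})_2 + f_2)\pp{y_2}$, we have $\operatorname{div}(Y) = 2x + \partial_{y_1}(\mathbf{A}\mathbf{y})_1 + \partial_{y_2}(\mathbf{A}\mathbf{y})_2 + O(\mathfrak m)$ along $\mathbf y=0$, and the linear terms contribute exactly $\operatorname{Tr}(\mathbf{A}(x))$. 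Evaluating the combination $\tfrac{1}{x}\bigl(\operatorname{div}(Y) - 2x\bigr)$ at $x=0$ recovers $\res Y$; here the $2x$ comes from $\pp x(x^2)$ and must be subtracted, reflecting that $x^2\pp x$ is the canonical saddle-node direction common to the whole class.

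Next I would track how $\operatorname{div}$ transforms under a fibered diffeomorphism $\Phi\in\fdiff$. The clean way is to work with the logarithmic volume form $\eta := \tfrac{\mathrm dx}{x}\wedge\tfrac{\mathrm dy_1}{\,\cdot\,}\wedge\cdots$ — more precisely with $\Omega := \tfrac{1}{x}\,\mathrm dx\wedge\mathrm dy_1\wedge\mathrm dy_2$ — and to study the function $g_Y$ defined by $\cal L_Y(\Omega) = g_Y\,\Omega$. A short computation gives $g_Y = \tfrac{1}{x}\cal L_Y(x) + \partial_{y_1}b_1 + \partial_{y_2}b_2 = 2x + \operatorname{div}_{\mathbf y}(b_1,b_2)$ in the notation of $\fvf$, and restricting $\tfrac{g_Y - 2x}{x}$ to $x=0,\ \mathbf y=0$ yields $\res Y$. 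The key point is that if $\Phi\in\fdiff$ is fibered, then $\Phi^*(x)=x$ forces $\Phi^*(\tfrac{\mathrm dx}{x}) = \tfrac{\mathrm dx}{x}$, and $\Phi^*(\Omega) = J\,\Omega$ where $J$ is the Jacobian of the fiber map $\mathbf y\mapsto\phi(x,\mathbf y)$; since $\Phi$ fixes the origin, $J(0)\in\ww C^*$ is a nonzero constant to leading order, so $\tfrac{1}{x}\cal L_Y(\log J)$ contributes nothing at $x=0$ along the $x$-axis.

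The mechanism making this work is Proposition \ref{prop: lien diffeo, chp vect et form}: for $Y' = \Phi_*(Y)$ we have $\Phi^*\bigl(\cal L_{Y'}(\Omega)\bigr) = \cal L_Y\bigl(\Phi^*(\Omega)\bigr)$, hence $\Phi^*(g_{Y'}\,\Omega) = \cal L_Y(J\Omega) = \bigl(\cal L_Y(J) + J g_Y\bigr)\Omega$, giving $g_{Y'}\circ\Phi = g_Y + \cal L_Y(\log J)$. Since $\cal L_Y(\log J) = \tfrac{1}{J}\cal L_Y(J)$ and $Y$ is tangent to the $x$-axis with $x^2\pp x$ leading term, the correction $\cal L_Y(\log J)$ lies in $\ps x \cdot \form{x,\mathbf y}$-type terms that vanish to the relevant order upon the normalization $\tfrac{1}{x}(\,\cdot\, - 2x)_{\mid x=0,\mathbf y=0}$; and $\Phi$ restricts to the identity on $x$, so evaluating at $x=0$ on both sides gives $\res(\Phi_*Y) = \res Y$.

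I expect the main obstacle to be the bookkeeping in the last step: one must verify carefully that the extra term $\cal L_Y(\log J)$, after dividing by $x$ and restricting to $x=\mathbf y=0$, genuinely drops out. This requires knowing that $\cal L_Y(\log J)$ vanishes at the origin to first order in $x$ — equivalently that $\log J$ is a unit (constant plus higher order) along the central manifold and that $Y$'s $\pp x$-component is $x^2$, not merely $O(x)$. An alternative, possibly cleaner, route avoids volume forms entirely: define $\res Y$ intrinsically as $\tfrac{1}{x}\operatorname{Tr}\bigl(\mathrm D_{\mathbf y}(b_1,b_2)_{\mid\mathbf y=0}\bigr)_{\mid x=0}$, expand $\Phi = \varphi\circ\exp(F)$ via Proposition \ref{prop: exp is onto}, and use $\exp(F)_*(Y) = \exp(\operatorname{ad}_F)(Y)$ from Proposition \ref{prop: exp (ad)} together with the order estimate in Lemma \ref{lem: order of Lie bracket} to show each bracket $\cro{F,Y}$ leaves the relevant $2$-jet coefficient unchanged; whichever route is shorter in the author's setup is the one I would present.
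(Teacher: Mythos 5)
Your reduction of the residue to a divergence-type invariant is attractive, and the transformation law you derive from Proposition \ref{prop: lien diffeo, chp vect et form}, namely $g_{\Phi_{*}Y}\circ\Phi=g_{Y}+\cal L_{Y}\left(\log J\right)$ with $\Phi^{*}\left(\Omega\right)=J\Omega$, is correct (the arithmetic slips are harmless: for the logarithmic form one finds $g_{Y}=x+\partial_{y_{1}}b_{1}+\partial_{y_{2}}b_{2}$, so one should subtract $x$ rather than $2x$; with the standard volume form your first-paragraph normalization is the right one). The genuine gap is in the final evaluation step, and it originates in a false claim: the $x$-axis is \emph{not} invariant for a general $Y\in\sn$ written in standard form --- one only knows that the restrictions $\alpha_{i}\left(x\right)=b_{i}\left(x,0,0\right)$ lie in $\ps x^{2}$. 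Consequently a conjugacy $\Phi\in\fdiff$, $\Phi\left(x,\mathbf{y}\right)=\left(x,\phi\left(x,\mathbf{y}\right)\right)$, between two standard forms need not send the $x$-axis to itself, and your identity, restricted to $\acc{\mathbf{y}=0}$, compares $g_{Y'}$ (where $Y':=\Phi_{*}\left(Y\right)$) along the \emph{image} curve $x\mapsto\left(x,\beta\left(x\right)\right)$, $\beta\left(x\right):=\phi\left(x,0,0\right)$, with $g_{Y}$ along the axis. Writing $g_{Y'}=x+\tx{Tr}\left(\mathbf{A}'\left(x\right)\right)+h'$ with $h'\in\ps{y_{1},y_{2}}$ (the linear part of $h'$ in $\mathbf{y}$ is in general nonzero, \emph{e.g.} $f'_{1}=y_{1}y_{2}$ gives $h'\ni y_{2}$), the discrepancy $g_{Y'}\left(x,\beta\left(x\right)\right)-g_{Y'}\left(x,0,0\right)=h'\left(x,\beta\left(x\right)\right)$ has order only $\tx{ord}\left(\beta\right)$ in $x$; similarly, evaluating the correction term produces contributions of the form $\left(\mathbf{A}\beta\right)_{i}\partial_{y_{i}}\left(\log J\right)$, again of order $\tx{ord}\left(\beta\right)$. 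Since a priori $\tx{ord}\left(\beta\right)\geq1$ only, these terms survive the normalization $\frac{1}{x}\left(\,\cdot\,\right)_{\mid x=0}$ and the conclusion does not follow.

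What is missing is precisely the point the paper's proof establishes first: because \emph{both} $Y$ and $Y'$ are in standard form, their constant terms satisfy $\alpha,\alpha'\in\ps x^{2}$, and the transformation rule for the constant term, $\alpha'=\mathbf{P}^{-1}\left(\alpha+\mathbf{A}\beta+\mathbf{f}\left(x,\beta\right)-x^{2}\ddd{\beta}x\right)$, together with the invertibility of $\mathbf{A}\left(0\right)=\tx{Diag}\left(-\lambda,\lambda\right)$, forces $\tx{ord}\left(\beta\right)\geq2$. Once this is proved, all parasitic terms above land in $\ps x^{2}$ and your volume-form route closes; without it the proof is incomplete, and the paper's direct computation (similarity invariance of the trace plus the bookkeeping showing $\tx{Tr}\left(\mathbf{A}'\right)\in\tx{Tr}\left(\mathbf{A}\right)+\ps x^{2}$) contains exactly the estimate your sketch elides. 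The same caveat applies to your alternative route: for $F\in\cal D^{\left(2\right)}$, Lemma \ref{lem: order of Lie bracket} only gives $\tx{ord}\left(\cro{F,Y}\right)\geq2$, which is \emph{not} enough, because the residue sits on the order-two monomial $xS\left(1,a_{1},a_{2}\right)$; one needs in addition the resonance computation from Lemma \ref{lem: Lie bracket in log notation} (the bracket of $xS\left(\mu\right)$ with the semi-simple part vanishes since $\ps{\left(0,-\lambda,\lambda\right),\left(1,0,0\right)}=0$) to see that the first bracket cannot alter it, and one must still rule out linear fibered factors $\mathbf{y}\mapsto P\mathbf{y}+\mathbf{c}x$ with $\mathbf{c}\neq0$, which again is done by invoking standardness of both forms.
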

\begin{proof}
Consider the system
\begin{equation}
x^{2}\ddd{\mathbf{y}}x=\mathbf{\alpha}\left(x\right)+\mathbf{A}\left(x\right)\mathbf{y}\left(x\right)+\mathbf{f}\left(x,\mathbf{y}\left(x\right)\right)\qquad,\label{eq: system doubly resonant saddle node-1-2}
\end{equation}
with $\mathbf{y}=\left(y_{1},y_{2}\right)$ and where the following
conditions hold:
\begin{itemize}
\item ${\displaystyle \alpha\left(x\right)=\left(\begin{array}{c}
\alpha_{1}\left(x\right)\\
\alpha_{2}\left(x\right)
\end{array}\right)},$ with ${\displaystyle \alpha_{1},\alpha_{2}\in\ps x^{2}\subset\form x}$
\item $\mathbf{A}\left(x\right)\in\mbox{Mat}_{2,2}\left(\form x\right)$
with $\mathbf{A}\left(0\right)=\tx{Diag}\left(-\lambda,\lambda\right)$,
$\lambda\in\ww C^{*}$
\item ${\displaystyle \mathbf{f}\left(x,\mathbf{y}\right)=\left(\begin{array}{c}
f_{1}\left(x,\mathbf{y}\right)\\
f_{2}\left(x,\mathbf{y}\right)
\end{array}\right)}$, with $f_{1},f_{2}\in\ps{y_{1},y_{2}}^{2}\subset\form{x,\mathbf{y}}$.
\end{itemize}
Perform the change of coordinates given by $\mathbf{y}=\beta\left(x\right)+\mathbf{P}\left(x\right)\mathbf{z}+\mathbf{h}\left(x,\mathbf{z}\right)$,
with ${\displaystyle \mathbf{z}}=\left(z_{1},z_{2}\right)$ and where:
\begin{itemize}
\item ${\displaystyle \beta\left(x\right)=\left(\begin{array}{c}
\beta_{1}\left(x\right)\\
\beta_{2}\left(x\right)
\end{array}\right)},$ with ${\displaystyle \beta_{1},\beta_{2}\in\ps x\subset\form x}$
\item $\mathbf{P}\left(x\right)\in\mbox{Mat}_{2,2}\left(\form x\right)$
such that $\mathbf{P}\left(0\right)\in\tx{GL}_{2}\left(\ww C\right)$
\item ${\displaystyle \mathbf{h}\left(x,\mathbf{y}\right)=\left(\begin{array}{c}
h_{1}\left(x,\mathbf{y}\right)\\
h_{2}\left(x,\mathbf{y}\right)
\end{array}\right)}$, with $h_{1},h_{2}\in\ps{z_{1},z_{2}}^{2}\subset\form{x,\mathbf{z}}$.
\end{itemize}
Then one obtain the following system satisfied by $\mathbf{z}\left(x\right)$:
\begin{eqnarray*}
x^{2}\ddd{\mathbf{z}}x & = & \mathbf{P}\left(x\right)^{-1}\left(\mathbf{\alpha}\left(x\right)+\mathbf{A}\left(x\right)\beta\left(x\right)+\mathbf{f}\left(x,\beta\left(x\right)\right)-x^{2}\ddd{\beta}x\left(x\right)\right)\\
 &  & +\mathbf{P}\left(x\right)^{-1}\left(\mathbf{A}\left(x\right)\mathbf{P}\left(x\right)-x^{2}\ddd{\mathbf{P}}x\left(x\right)+\ppp{\mathbf{f}}{\mathbf{y}}\left(x,\beta\left(x\right)\right)\mathbf{P}\left(x\right)\right)\mathbf{z}+\mbox{\ensuremath{\ps{z_{1},z_{2}}^{2}}}\qquad,
\end{eqnarray*}
Since $\mathbf{A}\left(0\right)\in\tx{GL}_{2}\left(\ww C\right)$,
$\mathbf{f}\left(x,\mathbf{y}\right)\in\ps{y_{1},y_{2}}^{2}$ and
$\tx{ord}\left(\beta\right)\geq1$, the order of 
\[
\mathbf{P}\left(x\right)^{-1}\left(\mathbf{\alpha}\left(x\right)+\mathbf{A}\left(x\right)\beta\left(x\right)+\mathbf{f}\left(x,\beta\left(x\right)\right)-x^{2}\ddd{\beta}x\left(x\right)\right)
\]
 is at least 2 if and only if $\tx{ord}\left(\beta\right)\geq2$.
Then: 
\begin{eqnarray*}
\tx{Tr}\left(\mathbf{P}\left(x\right)^{-1}\left(\mathbf{A}\left(x\right)\mathbf{P}\left(x\right)-x^{2}\ddd{\mathbf{P}}x\left(x\right)+\ppp{\mathbf{f}}{\mathbf{y}}\left(x,\beta\left(x\right)\right)\mathbf{P}\left(x\right)\right)\right) & \in & \tx{Tr}\left(\mathbf{A}\left(x\right)\right)+\ps x^{2}.
\end{eqnarray*}
So ${\displaystyle \left(\frac{\mbox{Tr}\left(\mathbf{A}\left(x\right)\right)}{x}\right)_{\mid x=0}}$
is invariant by fibered change of coordinates on system of the form
$\left(\mbox{\ref{eq: system doubly resonant saddle node-1-2}}\right)$
with $\tx{ord}\left(\alpha\right)\geq2$.
\end{proof}

\subsection{Proof of Theorems \ref{Th: Th drsn} and \ref{thm: Th ham}}

~

We will use the tools described in Section $2.$ 
\begin{proof}
Let $Y\in\snnd$ (\emph{resp. }in\emph{ $\sns$}) be a non-degenerate
(\emph{resp.} transversally Hamiltonian) doubly-resonant saddle-node:
\begin{eqnarray*}
Y & = & x^{2}\pp x+\left(-\lambda y_{1}+F_{1}\left(x,y_{1},y_{2}\right)\right)\pp{y_{1}}+\left(\lambda y_{2}+F_{2}\left(x,y_{1},y_{2}\right)\right)\pp{y_{1}}\,\,\,\,,
\end{eqnarray*}
with $\lambda\in\ww C^{*}$, and $F_{\nu}\left(x,\mathbf{y}\right)\in\mathfrak{m}^{2}$,
for $\nu=1,2$ . As seen in the previous subsection, we can assume
that $F_{1}\left(x,0,0\right)=F_{2}\left(x,0,0\right)=0$.

The general idea is to apply successive (infinitely many) diffeomorphisms
of the form 
\[
\exp\left(x^{j_{0}}y_{1}^{j_{1}}y_{2}^{j_{2}}S\left(0,\mu_{1,\mathbf{j}},\mu_{2,\mathbf{j}}\right)\right)
\]
 for convenient choices of $\mathbf{j},\mu_{1,\mathbf{j}},\mu_{2,\mathbf{j}}$,
in order to remove all the terms we want to. Let us consider the monomial
expansion of $Y$: 
\begin{eqnarray}
Y & = & \lambda S\left(0,-1,1\right)+xS\left(1,0,0\right)+\sum_{\mathbf{k}\in\cal I,\,\abs{\mathbf{k}}\geq1}x^{k_{0}}y_{1}^{k_{1}}y_{2}^{k_{2}}S\left(0,\mu_{1,\mathbf{k}},\mu_{2,\mathbf{k}}\right)\quad.\label{eq: decomposition proof}
\end{eqnarray}
Since $Y$ in non-degenerate we necessarily have 
\[
\mu_{1,\left(1,00\right)}+\mu_{2,\left(1,0,0\right)}=\res Y\in\ww C\backslash\ww Q_{\leq0}\qquad.
\]
In the transversally Hamiltonian case, each term in the sum 
\[
\sum_{\mathbf{k}\in\cal I,\,\abs{\mathbf{k}}\geq1}x^{k_{0}}y_{1}^{k_{1}}y_{2}^{k_{2}}S\left(0,\mu_{1,\mathbf{k}},\mu_{2,\mathbf{k}}\right)
\]
must satisfy 
\[
\mu_{1,\mathbf{k}}\left(k_{1}+1\right)+\mu_{2,\mathbf{k}}\left(k_{2}+1\right)=0\qquad,
\]
if $\mathbf{k}\neq\left(1,0,0\right)$ and $\mu_{1,\left(1,0,0\right)}+\mu_{2,\left(1,0,0\right)}=1$.

The normalizing conjugacy $\Phi$ is constructed in two steps. 
\begin{enumerate}
\item The first step is aimed at removing all \textbf{non-resonant} monomial
terms, \emph{i.e. }those of the form 
\[
{\displaystyle x^{k_{0}}y_{1}^{k_{1}}y_{2}^{k_{2}}S\left(0,\mu_{1,\mathbf{k}},\mu_{2,\mathbf{k}}\right)}\mbox{ , with }\mathbf{k}\in\cal I,\abs{\mathbf{k}}\geq1\mbox{ and }k_{1}\neq k_{2}\qquad.
\]
 
\item The second step is aimed at removing certain resonant monomial terms,
and more precisely those of the form 
\[
{\displaystyle x^{k_{0}}\left(y_{1}y_{2}\right)^{k}S\left(0,\eta_{1,\mathbf{i}},\eta_{2,\mathbf{i}}\right)}\mbox{ , except for}\left(k_{0},k\right)=\left(1,0\right)\mbox{ and }k_{0}=0\qquad.
\]

\end{enumerate}
We will see that each one of these steps allows us to define a fibered
diffeomorphism $\Phi_{j}$ (transversally symplectic in the transversally
Hamiltonian case), for $j=1,2$. Finally we define $\Phi:=\Phi_{2}\circ\Phi_{1}$.
The main tool used at each step is Proposition \ref{prop: exp }.
Moreover, each $\Phi_{j}$ will be constructed using Corollary \ref{lem: diffeo formel infinite composition}.
The fact that each $\Phi_{j}$ is a fibered diffeomorphism (transversally
symplectic in the transversally Hamiltonian case) will again come
from Lemma \ref{lem: diffeo formel infinite composition} (and Lemma
\ref{lem: suite de diffeo symplectique} in the transversally symplectic
case, and each $Y_{j}=\left(\Phi_{j}\right)_{*}\left(Y_{j-1}\right)$,
$j=1,2$ with $Y_{0}:=Y$, will be transversally Hamiltonian according
to Lemma \ref{lem: push forward symplectic fibr=0000E9}).
\begin{enumerate}
\item \emph{First step:} we remove all non-resonant monomial terms, using
diffeomorphisms of the form 
\[
{\displaystyle \exp\left(x^{i_{0}}y_{1}^{i_{1}}y_{2}^{i_{2}}S\left(0,\eta_{1,\mathbf{i}},\eta_{2,\mathbf{i}}\right)\right)}\quad,
\]
with $\mathbf{i}\in\cal I$, $\abs{\mathbf{i}}\geq1$, $i_{1}\neq i_{2}$
and $\eta_{1,\mathbf{i}},\eta_{2,\mathbf{i}}$ to be determined. We
have, thanks to Proposition \ref{prop: exp }: 
\[
{\displaystyle \Bigg(\exp\left(x^{i_{0}}y_{1}^{i_{1}}y_{2}^{i_{2}}S\left(0,\eta_{1,\mathbf{i}},\eta_{2,\mathbf{i}}\right)\right)}\Bigg)_{*}\left(Y_{0}\right)=Y_{0}+\frac{1}{1!}\cro{x^{i_{0}}y_{1}^{i_{1}}y_{2}^{i_{2}}S\left(0,\eta_{1,\mathbf{i}},\eta_{2,\mathbf{i}}\right),Y_{0}}+\dots,
\]
where $\left(\dots\right)$ are terms computed \emph{via} successive
nested brackets, and they are all of order at least $\abs{\mathbf{i}}+1$.
Let us compute the first bracket:{\small{}
\begin{eqnarray*}
 &  & \cro{x^{i_{0}}y_{1}^{i_{1}}y_{2}^{i_{2}}S\left(0,\eta_{1,\mathbf{i}},\eta_{2,\mathbf{i}}\right),Y_{0}}\\
 & = & \lambda\left(i_{1}-i_{2}\right)x^{i_{0}}y_{1}^{i_{1}}y_{2}^{i_{2}}S\left(0,\eta_{1,\mathbf{i}},\eta_{2,\mathbf{i}}\right)\\
 &  & -i_{0}x^{i_{0}+1}y_{1}^{i_{1}}y_{2}^{i_{2}}S\left(0,\eta_{1,\mathbf{i}},\eta_{2,\mathbf{i}}\right)\\
 &  & +\sum_{\mathbf{k}\in\cal I,\,\substack{\abs{\mathbf{k}}\geq1}
}x^{i_{0}+k_{0}}y_{1}^{i_{1}+k_{1}}y_{2}^{i_{2}+k_{2}}\left(k_{1}\eta_{1,i_{0}}+k_{2}\eta_{2,i_{2}}\right)S\left(0,\mu_{1,\mathbf{k}},\mu_{2,\mathbf{k}}\right)\\
 &  & -\sum_{\substack{\mathbf{k}\in\cal I,\,\abs{\mathbf{k}}\geq1}
}x^{i_{0}+k_{0}}y_{1}^{i_{1}+k_{1}}y_{2}^{i_{2}+k_{2}}\left(i_{1}\mu_{1,\mathbf{k}}+i_{2}\mu_{2,\mathbf{k}}\right)S\left(0,\eta_{1,\mathbf{i}},\eta_{2,\mathbf{i}}\right)\quad.
\end{eqnarray*}
} Then one can remove all terms of the form ${\displaystyle x^{i_{0}}y_{1}^{i_{1}}y_{2}^{i_{2}}S\left(0,\mu_{1,\mathbf{i}},\mu_{2,\mathbf{i}}\right)}$
with $\abs{\mathbf{i}}\geq1$ and $i_{1}\neq i_{2}$ by induction
on $\abs{\mathbf{i}}\geq1$. We then define (using Lemma \ref{lem: diffeo formel infinite composition})
a fibered diffeomorphism $\Phi_{1}$, such that $Y_{1}:=\left(\Phi_{1}\right)_{*}\left(Y_{0}\right)$
is still of the form $\left(\ref{eq: decomposition proof}\right)$,
but without non-resonant terms: 
\begin{eqnarray*}
Y_{1} & = & \lambda S\left(0,-1,1\right)+xS\left(1,a_{1},a_{2}\right)\\
 &  & +\sum_{\substack{k_{0}+k\geq1\\
\left(k_{0},k\right)\neq\left(1,0\right)
}
}x^{k_{0}}y_{1}^{k}y_{2}^{k}S\left(0,\mu_{1,\mathbf{k}},\mu_{2,\mathbf{k}}\right)
\end{eqnarray*}
for maybe different $\mu_{j,\mathbf{k}}$. Notice that $a_{1},a_{2}$
here are necessarily such that $a_{1}+a_{2}\notin\ww Q_{\leq0}$ since
the vector field is supposed to be non-degenerate, and this condition
is invariant under fibered change of coordinates. 

\begin{rem*}
In the transversally Hamiltonian case, the terms $x^{i_{0}}y_{1}^{i_{1}}y_{2}^{i_{2}}S\left(0,\eta_{1,\mathbf{i}},\eta_{2,\mathbf{i}}\right)$
to be removed at this stage satisfy $\eta_{1,\mathbf{i}}\left(i_{1}+1\right)+\eta_{2,\mathbf{i}}\left(i_{2}+1\right)=0$,
so that $\Phi_{1}$ is transversally symplectic according to Proposition
\ref{prop: expo de champ transv hamilt} and Lemma \ref{lem: suite de diffeo symplectique}.
Moreover, in this case, we necessarily have $a_{1}+a_{2}=1$.
\end{rem*}
\item \emph{Second step:} we finally remove all the terms of the form 
\[
{\displaystyle x^{i_{0}}\left(y_{1}y_{2}\right)^{i}S\left(0,\eta_{1,\mathbf{i}},\eta_{2,\mathbf{i}}\right)}\mbox{ , except for}\left(i_{0},i\right)=\left(1,0\right)\mbox{ and }i_{0}=0\qquad,
\]
using diffeomorphisms of the form 
\[
{\displaystyle \exp\left(x^{i_{0}}\left(y_{1}y_{2}\right)^{i}S\left(0,\eta_{1,\mathbf{i}},\eta_{2,\mathbf{i}}\right)\right)}\qquad,
\]
with $i_{0}+i\geq1$, and $\eta_{1,\mathbf{i}},\eta_{2,\mathbf{i}}$
to be determined. We have, thanks to Proposition \ref{prop: exp }:
\[
{\displaystyle \Bigg(\exp\left(x^{i_{0}}\left(y_{1}y_{2}\right)^{i}S\left(0,\eta_{1,\mathbf{i}},\eta_{2,\mathbf{i}}\right)\right)}\Bigg)_{*}\left(Y_{1}\right)=Y_{1}+\frac{1}{1!}\cro{x^{i_{0}}\left(y_{1}y_{2}\right)^{i}S\left(0,\eta_{1,\mathbf{i}},\eta_{2,\mathbf{i}}\right),Y_{1}}+\dots,
\]
where $\left(\dots\right)$ are terms computed \emph{via} successive
nested brackets, and they are all of order strictly greater than the
order of the first bracket. Let us compute the first bracket:{\small{}
\begin{eqnarray*}
 &  & \cro{x^{i_{0}}\left(y_{1}y_{2}\right)^{i}S\left(0,\eta_{1,\mathbf{i}},\eta_{2,\mathbf{i}}\right),Y_{3}}\\
 & = & -\left(i_{0}+i\left(a_{1}+a_{2}\right)\right)x^{i_{0}+1}\left(y_{1}y_{2}\right)^{i}S\left(0,\eta_{1,\mathbf{i}},\eta_{2,\mathbf{i}}\right)\\
 &  & +{\displaystyle \sum_{\substack{k_{0}+2k\geq1\\
\left(k_{0},k\right)\neq\left(1,0\right)
}
}}x^{i_{0}+k_{0}}\left(y_{1}y_{2}\right)^{i+k}k\left(\eta_{1,i_{0}}+\eta_{2,i_{2}}\right)S\left(0,\mu_{1,\mathbf{k}},\mu_{2,\mathbf{k}}\right)\\
 &  & {\displaystyle -\sum_{\substack{k_{0}+2k\geq1\\
\left(k_{0},k\right)\neq\left(1,0\right)
}
}x^{i_{0}+k_{0}}\left(y_{1}y_{2}\right)^{i+k}i\left(\mu_{1,\mathbf{k}}+\mu_{2,\mathbf{k}}\right)S\left(0,\eta_{1,\mathbf{i}},\eta_{2,\mathbf{i}}\right)}\,\,.
\end{eqnarray*}
} Then we see that one can remove all terms of the form ${\displaystyle x^{i_{0}}\left(y_{1}y_{2}\right)^{i}S\left(0,\eta_{1,\mathbf{i}},\eta_{2,\mathbf{i}}\right)}$
except for $\left(i_{0},i\right)=\left(1,0\right)$ and for $i_{0}=0$,
without creating non-resonant terms, since $\left(a_{1}+a_{2}\right)\notin\ww Q_{\leq0}$
. We do this by induction on $I:=i_{0}+i\geq1$, and for fixed $I\geq1$,
we remove the terms with $i$ increasing and $i_{0}$ decreasing.
Notice that at each step we do not create terms already removed earlier
in the process.\\
We then define (using Lemma \ref{lem: diffeo formel infinite composition})
a fibered diffeomorphism $\Phi_{2}$, such that $Y_{2}:=\left(\Phi_{2}\right)_{*}\left(Y_{1}\right)$
is of the form 
\begin{eqnarray*}
Y_{2} & = & \lambda S\left(0,-1,1\right)+xS\left(1,a_{1},a_{2}\right)+\sum_{k\geq1}\left(y_{1}y_{2}\right)^{k}S\left(0,\mu_{1,\mathbf{k}},\mu_{2,\mathbf{k}}\right)\quad.
\end{eqnarray*}

\begin{rem*}
In the transversally Hamiltonian case, the terms $x^{i_{0}}\left(y_{1}y_{2}\right)^{i}S\left(0,\eta_{1,\mathbf{i}},\eta_{2,\mathbf{i}}\right)$
to be removed at this stage satisfy $\left(\eta_{1,\mathbf{i}}+\eta_{2,\mathbf{i}}\right)=0$,
so that $\Phi_{2}$ is transversally symplectic, according to Proposition
\ref{prop: expo de champ transv hamilt} and and Lemma \ref{lem: suite de diffeo symplectique}.
\end{rem*}
\end{enumerate}
Finally, we define $\Phi:=\Phi_{2}\circ\Phi_{1}$, so that $\Phi_{*}\left(Y\right)=Y_{2}$
and $\Phi$ is a fibered diffeomorphism (transversally symplectic
in the Hamiltonian case).
\end{proof}

\subsection{\label{sub: proof of 4. in Th1}Uniqueness: proof of Theorem \ref{thm: uniqueness}}

~

We now prove Theorem \ref{thm: uniqueness}.
\begin{proof}
Let
\begin{eqnarray*}
Z & = & x^{2}\pp x+\left(-\lambda+a_{1}x+c_{1}\left(v\right)\right)z_{1}\pp{z_{1}}+\left(\lambda+a_{2}x+c_{2}\left(v\right)\right)z_{2}\pp{z_{2}}\\
Z' & = & x^{2}\pp x+\left(-\lambda'+a'_{1}x+c'_{1}\left(v\right)\right)z_{1}\pp{z_{1}}+\left(\lambda'+a'_{2}x+c'_{2}\left(v\right)\right)z_{2}\pp{z_{2}}\,\,\,\,\,,
\end{eqnarray*}
where $\left(\lambda,\lambda',a_{1},a_{2},a'_{1},a'_{2}\right)\in\left(\ww C^{*}\right)^{2}\times\ww C^{4}$,
$\left(a_{1}+a_{2},a'_{1}+a'_{2}\right)\in\left(\ww C\backslash\ww Q_{\leq0}\right)^{2}$
and $\left(c_{1},c_{2},c'_{1},c'_{2}\right)\in\left(v\form v\right)^{4}$
are formal power series in $v=z_{1}z_{2}$ of order at least one.
\begin{itemize}
\item It is clear that if there exists $\varphi\,:\, v\mapsto\theta v$
with $\theta\in\ww C^{*}$ such that 
\begin{eqnarray*}
\left(\lambda,a_{1},a_{2},c_{1},c_{2}\right) & = & \left(\lambda',a'_{1},a'_{2},c'_{1}\circ\varphi,c'_{2}\circ\varphi\right)\\
\Bigg(\mbox{\emph{resp.} }\,\left(\lambda,a_{1},a_{2},c_{1},c_{2}\right) & = & \left(-\lambda',a'_{2},a'_{1},c'_{2}\circ\varphi,c'_{1}\circ\varphi\right)\Bigg)
\end{eqnarray*}
 then $Z$ is $\fdiff$-conjugate to $Z'$.
\item Now assume that $Z$ is $\fdiff$-conjugate to $Z'$. First of all,
studying the terms of degree 1 with respect to $\mathbf{z}$, we see
that we either have $\left(\lambda,a_{1},a_{2}\right)=\left(\lambda',a'_{1},a'_{2}\right)$
or $\left(\lambda,a_{1},a_{2}\right)=\left(-\lambda',a'_{2},a'_{1}\right)$.
Up to perform a linear change of coordinates beforehand, let us assume
that $\left(\lambda,a_{1},a_{2}\right)=\left(\lambda',a'_{1},a'_{2}\right)$.
In the following, and for convenience, we will use the notations:
\[
\begin{cases}
Z=Z_{\left(c,r\right)}:=xS\left(1,a_{1},a_{2}\right)+\left(\lambda+c\left(v\right)\right)S\left(0,-1,1\right)+r\left(v\right)S\left(0,a_{1},a_{2}\right)\\
Z'=Z_{\left(c',r'\right)}:=xS\left(1,a_{1},a_{2}\right)+\left(\lambda+c'\left(v\right)\right)S\left(0,-1,1\right)+r'\left(v\right)S\left(0,a_{1},a_{2}\right) & ,
\end{cases}
\]
where: 
\begin{eqnarray*}
\begin{cases}
c_{1}=-c+r & ,\, c_{2}=c+r\\
c'_{1}=-c'+r' & ,\, c'_{2}=c'+r'
\end{cases}\,\,\,\,,
\end{eqnarray*}
so that $\mbox{ord}\left(c\right)\geq1$, $\mbox{ord}\left(r\right)\geq1$.
\\
Now we have to prove that if $Z_{\left(c,r\right)}$ is $\fdiff-$conjugate
to $Z_{\left(c',r'\right)}$, then $\left(c,r\right)=\left(c',r'\right)$.
By assumption, there exists $\Phi\in\fdiff$ such that 
\begin{eqnarray*}
\Phi_{*}\left(Z_{\left(c,r\right)}\right) & = & Z_{\left(c',r'\right)}\,\,\,.
\end{eqnarray*}
By Remark \ref{rem: D is diagonal}, $\mbox{D}_{0}\Phi=\tx{diag}\left(1,\theta_{1},\theta_{2}\right)$
is diagonal. Now, set $\Psi:=\left(\mbox{D}_{0}\Phi\right)^{-1}\circ\Phi$
, $\varphi:v\mapsto\left(\theta_{1}\theta_{2}\right)v$, and $\left(\overline{c},\overline{r}\right):=\left(c'\circ\varphi,r'\circ\varphi\right)$,
so that: 
\begin{eqnarray*}
\Psi_{*}\left(Z_{\left(c,r\right)}\right) & = & Z_{\left(\overline{c},\overline{r}\right)}\,\,\,\,.
\end{eqnarray*}
We are going to prove that $\Psi=\mbox{Id}$. By Proposition \ref{prop: conjuation with resonant monomial},
there exists $G\in\fvf$ such that $\Psi=\tx{exp}\left(G\right)$
and: 
\begin{eqnarray*}
G & = & g_{0}\left(x,v\right)\pp x+g_{1}\left(x,v\right)z_{1}\pp{z_{1}}+g_{2}\left(x,v\right)z_{2}\pp{z_{2}}\,\,\,\,,
\end{eqnarray*}
where $g_{i}\in\mathfrak{m}\subset\form{x,v}$ for $i=1,2$ and $g_{0}\in\mathfrak{m}^{2}\subset\form{x,v}$
is of order at least two. Since $\Psi$ is fibered in $x$ we deduce
that $g_{0}=0$. Therefore, using the notation $\left(\mbox{\ref{eq: log notation}}\right)$,
we can write: 
\begin{eqnarray*}
G & = & A\left(x,v\right)S\left(0,-1,1\right)+B\left(x,v\right)S\left(0,a_{1},a_{2}\right)\,\,\,\,,
\end{eqnarray*}
where 
\[
\begin{cases}
\math{A=A_{i,j}x^{i}v^{j}}\\
\math{B=\sum_{\substack{i,j\geq0\\
i+j\geq1
}
}B_{i,j}x^{i}v^{j}}
\end{cases}\,\,\,\,.
\]
Let us prove that $A=B=0$ (hence $G=0$) so that $\Psi=\mbox{Id}$.
We consider the Jordan decompositions of $Z:=Z_{\left(c,r\right)}$
and $\overline{Z}:=Z_{\left(\overline{c},\overline{r}\right)}$: 
\begin{eqnarray*}
\begin{cases}
Z=Z_{S}+Z_{N} & ,\, Z_{S}\mbox{ semi-simple},\, Z_{N}\mbox{ nilpotent},\,\cro{Z_{S},Z_{N}}=0\\
\overline{Z}=\overline{Z}{}_{S}+\overline{Z}{}_{N} & ,\,\overline{Z}{}_{S}\mbox{ semi-simple},\,\overline{Z}{}_{N}\mbox{ nilpotent},\,\cro{\overline{Z}{}_{S},\overline{Z}{}_{N}}=0
\end{cases}\,\,\,\,.
\end{eqnarray*}
By uniqueness of this decomposition we clearly have:
\begin{eqnarray*}
\begin{cases}
\math{Z_{S}=\overline{Z}{}_{S}=S\left(0,-\lambda,\lambda\right)}\\
\math{Z_{N}=xS\left(1,a_{1},a_{2}\right)+c\left(v\right)S\left(0,-1,1\right)+r\left(v\right)S\left(0,a_{1},a_{2}\right)}\\
\math{\overline{Z}_{N}=xS\left(1,a_{1},a_{2}\right)+\overline{c}\left(v\right)S\left(0,-1,1\right)+\overline{r}\left(v\right)S\left(0,a_{1},a_{2}\right)}
\end{cases}\,\,\,\,,
\end{eqnarray*}
and we also know that: 
\begin{eqnarray*}
\Psi_{*}\left(Z\right)=\overline{Z} & \Rightarrow & \begin{cases}
\Psi_{*}\left(Z_{S}\right)=\overline{Z}{}_{S}\\
\Psi_{*}\left(Z_{N}\right)=\overline{Z}{}_{N}
\end{cases}\,\,\,\,.
\end{eqnarray*}
Let us now consider the associated two-dimensional vector fields in
the variables $\left(x,v\right)$. In the ``chart'' $\left(x,v\right)$
the vector field $G$ is given by $F=B.S\left(0,a\right)$, with $a=a_{1}+a_{2}$.
$Z$ and $\overline{Z}$ correspond respectively to: 
\begin{eqnarray*}
Y & := & xS\left(1,a\right)+r\left(v\right)S\left(0,a\right)\,\,\,,\\
\overline{Y} & := & xS\left(1,a\right)+\overline{r}\left(v\right)S\left(0,a\right)\,\,\,.
\end{eqnarray*}
Thus we have $\mbox{exp}\left(F\right)_{*}\left(Y\right)=\overline{Y}$.
By Proposition \ref{prop: exp (ad)} we derive 
\begin{eqnarray*}
\tx{exp}\left(F\right)_{*}\left(Y\right) & = & Y+\cro{F,Y}+\frac{1}{2!}\cro{F,\cro{F,Y}}+\dots
\end{eqnarray*}
and 
\begin{eqnarray}
r\left(v\right)S\left(0,a\right)+\cro{F,Y}+\frac{1}{2!}\cro{F,\cro{F,Y}}+\dots & = & \overline{r}\left(v\right)S\left(0,a\right)\,\,\,\,.\label{eq: r and bar r}
\end{eqnarray}
We compute next 
\begin{eqnarray*}
\cro{F,Y} & = & \acc{-x\left(\cal L_{S\left(1,a\right)}\left(B\right)\right)+B\left(\cal L_{S\left(0,a\right)}\left(r\right)\right)-r\left(\cal L_{S\left(0,a\right)}\left(B\right)\right)}S\left(0,a\right)
\end{eqnarray*}
and, setting 
\begin{eqnarray}
C^{\left(1\right)}\left(x,v\right) & := & -x\left(\cal L_{S\left(1,a\right)}\left(B\right)\right)+B\left(\cal L_{S\left(0,a\right)}\left(r\right)\right)\label{eq: expression of C^(1)}\\
 &  & -r\left(\cal L_{S\left(0,a\right)}\left(B\right)\right)\,\,\,,\nonumber 
\end{eqnarray}
we obtain 
\begin{eqnarray*}
\cro{F,Y} & = & C^{\left(1\right)}\left(x,v\right)S\left(0,a\right)\,\,\,\,.
\end{eqnarray*}
Now, it is easy to see that for all $l\in\ww N$, $\mbox{ad}_{F}^{\circ l}\left(Y\right)$
ca be written 
\begin{eqnarray*}
\mbox{ad}_{F}^{\circ l}\left(Y\right) & = & C^{\left(l\right)}\left(x,v\right)S\left(0,a\right)\,\,\,\,,
\end{eqnarray*}
where $C^{\left(l\right)}$ is determined by the recursive relation:
\begin{eqnarray*}
C^{\left(l+1\right)}\left(x,v\right) & = & B\left(x,v\right)\left(\cal L_{S\left(0,a\right)}\left(C^{\left(l\right)}\right)\right)-C^{\left(l\right)}\left(x,v\right)\left(\cal L_{S\left(0,a\right)}\left(B\right)\right)\,\,\,\,.
\end{eqnarray*}
In particular, we see that for all $l\geq2$, $C^{\left(l\right)}\left(x,0\right)=0$.
Equation $\left(\mbox{\ref{eq: r and bar r}}\right)$ can now be rewritten:
\begin{eqnarray}
r\left(v\right)+C^{\left(1\right)}\left(x,v\right)+\underset{l\geq2}{\sum}C^{\left(l\right)}\left(x,v\right) & = & \overline{r}\left(v\right)\,\,\,\,.\label{eq: r, bar r and the C^(l)'s}
\end{eqnarray}
Let us set $\math{r\left(v\right)=\underset{k\geq1}{\sum}r_{k}v^{k}}$
and $\math{\overline{r}\left(v\right)=\underset{k\geq1}{\sum}\overline{r}_{k}v^{k}}$.
Looking at terms independent of $v$ in $\left(\mbox{\ref{eq: r, bar r and the C^(l)'s}}\right)$
($i.e.$ by taking $v=0$), we see that $C^{\left(1\right)}\left(x,0\right)=0$.
Taking $\left(\mbox{\ref{eq: expression of C^(1)}}\right)$ into account
we obtain that $\math{\ppp{B\left(x,0\right)}x=0}$. Since $\mbox{ord}\left(B\right)\geq1$
(by assumption) this means that $B\left(x,0\right)=0$. Let us prove
the properties $B_{i,k}=0$ and $r_{k}=\overline{r}_{k}$ for all
$i,j\in\ww N$ and $k\leq j$ by induction on $j\geq0$. 

\begin{itemize}
\item $j=0$. This corresponds to the case described above: for all $i\geq0$,
$B_{i,0}=0$ (and $r_{0}=\overline{r}_{0}$).
\item If the property holds at a rank $j\geq0$, if we consider for all
$i\geq0$ terms of homogenous degree $\left(i+1,j+1\right)$ in $\left(\mbox{\ref{eq: r, bar r and the C^(l)'s}}\right)$,
we obtain: 
\[
\left(i+a\left(j+1\right)\right)B_{i,j+1}=0
\]
by induction, and because for all $l\geq2$ the relation $C^{\left(l\right)}\left(x,0\right)=0$
also holds. Since $a\notin\ww Q_{\leq0}$ we have $B_{i,j+1}=0$.
On the other hand, if we look at terms of homogeneous degree $\left(0,j+1\right)$,
we obtain: $r_{j+1}=\overline{r}_{j+1}$. 
\end{itemize}
\end{itemize}

We conclude that $B=0$, so that $F=0$ and $r=\overline{r}$.

Finally, we have $G=A\cal L\left(0,-1,1\right)$. Taking the relation
$\tx{exp}\left(G\right)_{*}\left(Z_{N}\right)=\overline{Z}{}_{N}$
into account, we have:
\begin{eqnarray*}
Z_{N}+\cro{G,Z_{N}}+\frac{1}{2!}\cro{G,\cro{G,Z_{N}}}+\dots & = & \overline{Z}{}_{N}
\end{eqnarray*}
if and only if 
\begin{eqnarray*}
c\left(v\right)S\left(0,-1,1\right)+\cro{G,Z_{N}}+\frac{1}{2!}\cro{G,\cro{G,Z_{N}}}+\dots & = & \overline{c}\left(v\right)S\left(0,-1,1\right)\,\,\,\,\,.
\end{eqnarray*}
Let us compute $\cro{G,Z_{N}}$: 
\begin{eqnarray*}
\cro{G,Z_{N}} & = & -\acc{x\cal L_{S\left(1,a_{1},a_{2}\right)}\left(A\right)+r\left(v\right)\cal L_{S\left(0,a_{1},a_{2}\right)}\left(A\right)}S\left(0,-1,1\right)\,\,\,\,.
\end{eqnarray*}
All other Lie brackets vanish. There only remains
\begin{eqnarray*}
c\left(v\right)-x\cal L_{S\left(1,a_{1},a_{2}\right)}\left(A\right)-r\left(v\right)\cal L_{S\left(0,a_{1},a_{2}\right)}\left(A\right) & = & \overline{c}\left(v\right)\,\,\,\,,
\end{eqnarray*}
which becomes a system of identities between terms of same degree:
\begin{eqnarray*}
\begin{cases}
\math{c_{j}-\underset{k=0}{\overset{j-k_{1}}{\sum}}akA_{0,k}r_{j-k}=\overline{c}{}_{j}} & ,\, j\geq0\\
\math{\left(i+aj\right)A_{i,j}+\underset{k=0}{\overset{j-k_{1}}{\sum}}akA_{i+1,k}r_{j-k}=0} & ,\, i\geq0,\, j\geq0
\end{cases} &  & \,\,\,\,\,.
\end{eqnarray*}
Once again, we prove by induction on $j\geq0$ that for all $i\geq0$
and all $0\leq k\leq j$ the relations $A_{i,k}=0$ and $c_{k}=\overline{c}_{k}$
hold. Thus $A=0$ and $c=\overline{c}$.

As a conclusion $\Psi=\mbox{Id}$ and $\left(c,r\right)=\left(\overline{c},\overline{r}\right)$,
so that $\Phi=\mbox{D}_{0}\Phi=\tx{diag}\left(1,\theta_{1},\theta_{2}\right)$
and $\left(c,r\right):=\left(c'\circ\varphi,r'\circ\varphi\right)$
where $\varphi~:~v\mapsto\left(\theta_{1}\theta_{2}\right)v$.

\end{proof}

\subsection{Fibered isotropies of the formal normal form}

Looking back at the uniqueness proof in the previous paragraph, we
immediately obtain all formal fibered isotropies of the normal form
given by Theorems \ref{Th: Th drsn} and \ref{thm: Th ham}. We recall
that an isotropy of a vector field is a self-conjugacy. For a vector
field $X\in\fvf$, we set: 
\begin{eqnarray*}
\fisot X & := & \acc{\Phi\in\fdiff\mid\Phi_{*}\left(X\right)=X}\,\,\,\,.
\end{eqnarray*}

\begin{prop}
\label{prop: fibered isotropies}Consider a normal form of $\snnd$
\begin{eqnarray*}
Z & = & x^{2}\pp x+\left(-\lambda+a_{1}x+c_{1}\left(y_{1}y_{2}\right)\right)y_{1}\pp{y_{1}}+\left(\lambda+a_{2}x+c_{2}\left(y_{1}y_{2}\right)\right)y_{2}\pp{y_{2}}\,\,,
\end{eqnarray*}
with parameters $\left(\lambda,a_{1},a_{2},c_{1},c_{2}\right)\in\cal P$.
Then:
\begin{eqnarray*}
\fisot Z & = & \acc{\tx{diag}\left(1,\theta_{1},\theta_{2}\right),\,\left(\theta_{1},\theta_{2}\right)\in\left(\ww C^{*}\right)^{2}\Big\vert\left(c_{1},c_{2}\right)\left(\theta_{1}\theta_{2}v\right)=\left(c_{1},c_{2}\right)\left(v\right)}\,\,\,.
\end{eqnarray*}
\end{prop}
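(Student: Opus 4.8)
The plan is to read the statement off the uniqueness argument of Subsection~\ref{sub: proof of 4. in Th1}, specialized to the case $Z'=Z$, and to check the easy converse by a direct computation. First I would unwind the definition: an element $\Phi\in\fisot Z$ is a fibered self-conjugacy, i.e. $\Phi_*\left(Z\right)=Z$. Since $Z$ is in Poincar\'e--Dulac normal form with semi-simple part $S\left(0,-\lambda,\lambda\right)$, whose eigenvalues $0,-\lambda,\lambda$ are pairwise distinct (as $\lambda\in\ww C^*$), Remark~\ref{rem: D is diagonal} forces $\mbox{D}_0\Phi$ to be diagonal; being moreover fibered in $x$, its linear part is necessarily of the form $\mbox{D}_0\Phi=\tx{diag}\left(1,\theta_1,\theta_2\right)$ with $\left(\theta_1,\theta_2\right)\in\left(\ww C^*\right)^2$. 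In contrast with the situation of Theorem~\ref{thm: uniqueness}, the ``swap'' alternative $\left(\lambda,a_1,a_2\right)=\left(-\lambda,a_2,a_1\right)$ cannot occur here, since it would require $\lambda=-\lambda$.

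Next I would factor $\Phi=\tx{diag}\left(1,\theta_1,\theta_2\right)\circ\Psi$, where $\Psi:=\left(\mbox{D}_0\Phi\right)^{-1}\circ\Phi$ is fibered and tangent to the identity, and set $\varphi:v\mapsto\theta_1\theta_2 v$. Writing $Z=Z_{\left(c,r\right)}$ in the notation of the uniqueness proof (so that $c_1=-c+r$ and $c_2=c+r$), the relation $\Phi_*\left(Z\right)=Z$ becomes $\Psi_*\left(Z_{\left(c,r\right)}\right)=Z_{\left(c\circ\varphi,\,r\circ\varphi\right)}$. This is verbatim the situation handled in the proof of Theorem~\ref{thm: uniqueness} (the non-degeneracy $a_1+a_2\notin\ww Q_{\leq0}$ being part of the hypothesis on a normal form of $\snnd$), whose conclusion is that $\Psi=\mbox{Id}$ and $\left(c,r\right)=\left(c\circ\varphi,\,r\circ\varphi\right)$. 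Hence $\Phi=\tx{diag}\left(1,\theta_1,\theta_2\right)$, and since the pair $\left(c,r\right)$ determines and is determined by $\left(c_1,c_2\right)$ through an invertible linear substitution, the equality $\left(c,r\right)=\left(c\circ\varphi,r\circ\varphi\right)$ is equivalent to $\left(c_1,c_2\right)\left(\theta_1\theta_2 v\right)=\left(c_1,c_2\right)\left(v\right)$. This yields the inclusion $\subseteq$.

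For the reverse inclusion I would verify directly that any $\Phi=\tx{diag}\left(1,\theta_1,\theta_2\right)$ satisfying $\left(c_1,c_2\right)\left(\theta_1\theta_2 v\right)=\left(c_1,c_2\right)\left(v\right)$ is an isotropy. This is immediate: the Euler fields $y_1\pp{y_1}$ and $y_2\pp{y_2}$ are invariant under the scaling $\left(y_1,y_2\right)\mapsto\left(\theta_1 y_1,\theta_2 y_2\right)$, while $v=y_1y_2$ is multiplied by $\theta_1\theta_2$. Pushing $Z$ forward by $\Phi$ therefore only changes each coefficient $c_i\left(v\right)$ into $c_i\left(v/\left(\theta_1\theta_2\right)\right)$, which equals $c_i\left(v\right)$ precisely under the invariance hypothesis (after the harmless reparametrization $v\mapsto\theta_1\theta_2 v$), so that $\Phi_*\left(Z\right)=Z$.

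I do not anticipate a genuine obstacle: the substantive content --- that a fibered conjugacy tangent to the identity between two such normal forms must be trivial --- has already been established in the proof of Theorem~\ref{thm: uniqueness} and is simply invoked with $Z'=Z$. The only points deserving care are bookkeeping ones: that the scaling factor appearing in $\varphi$ is the product $\theta_1\theta_2$ (and not $\theta_1$, $\theta_2$ separately), that the dictionary $\left(c,r\right)\leftrightarrow\left(c_1,c_2\right)$ is used consistently in both directions, and that --- unlike the classification up to $\fdiff$ in Theorem~\ref{thm: uniqueness} --- no $\nicefrac{\ww Z}{2\ww Z}$ factor survives in the isotropy group, precisely because $Z=Z'$ rules out $\lambda=-\lambda$.
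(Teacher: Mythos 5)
Your proposal is correct and follows essentially the same route as the paper: the paper gives no separate argument but states that the proposition follows ``immediately'' from the uniqueness proof of Theorem \ref{thm: uniqueness}, which is precisely the specialization $Z'=Z$ that you carry out (factoring $\Phi=\tx{diag}\left(1,\theta_{1},\theta_{2}\right)\circ\Psi$, invoking the conclusion $\Psi=\tx{Id}$ and $\left(c,r\right)=\left(c\circ\varphi,r\circ\varphi\right)$, and checking the easy converse by direct scaling). Your additional remarks --- that the swap alternative is excluded since it would force $\lambda=-\lambda$, and that the dictionary $\left(c,r\right)\leftrightarrow\left(c_{1},c_{2}\right)$ converts the invariance condition correctly --- are exactly the bookkeeping the paper leaves implicit.
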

\begin{rem}
If $\left(c_{1},c_{2}\right)\neq\left(0,0\right)$ the condition $c_{i}\left(\theta_{1}\theta_{2}v\right)=c_{i}\left(v\right)$
for each $i\in\acc{1,2}$ is equivalent to requiring that each $c_{i}$
lie in $\form{v^{q}}$, for some $q\in\ww N_{>0}$, and that $\theta_{1}\theta_{2}$
be a $q^{\mbox{th}}$ root of unity.
\end{rem}
This proposition has for immediate consequence the (almost) uniqueness
of the normalizing conjugacy $\Phi\in\diff{}$ in Theorem \ref{Th: Th drsn}.
More precisely:
\begin{cor}
\label{cor: uniqueness tangent to id}Let $Y\in\snnd$ be a non-degenerate
doubly-resonant saddle-node such that $\tx D_{0}Y=\tx{diag}\left(0,-\lambda,\lambda\right)$,
with $\lambda\neq0$. Then there exists a unique fibered diffeomorphism
$\Phi\in\fdiff$ tangent to the identity such that: 
\begin{eqnarray}
\Phi_{*}\left(Y\right) & = & x^{2}\pp x+\left(-\lambda+a_{1}x+c_{1}\left(v\right)\right)y_{1}\pp{y_{1}}\nonumber \\
 &  & +\left(\lambda+a_{2}x+c_{2}\left(v\right)\right)y_{2}\pp{y_{2}}\,\,\,\,,\label{eq:eq: fibered normal form-1-2}
\end{eqnarray}
where we put $v:=y_{1}y_{2}$. Here, $c_{1},c_{2}$ belong to $\ps v=v\form v$
and $a_{1},a_{2}\in\ww C$ are such that $a_{1}+a_{2}=\res Y$.\end{cor}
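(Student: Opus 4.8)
The plan is to deduce both assertions from Theorems \ref{Th: Th drsn} and \ref{thm: uniqueness}, the only real issue being to control the linear part of the conjugating diffeomorphisms so as to isolate the representative tangent to the identity. For existence, I would first invoke Theorem \ref{Th: Th drsn} to produce some $\Phi_0\in\fdiff$ with $\left(\Phi_0\right)_*\left(Y\right)$ equal to a normal form $Z_0$ as in $\left(\ref{eq: fibered normal form-1}\right)$. The crucial point is that $\tx D_0\Phi_0$ is diagonal: conjugacy acts on linear parts by matrix conjugation, and since $Y$ and $Z_0$ share the linear part $\tx{diag}\left(0,-\lambda,\lambda\right)$, the matrix $\tx D_0\Phi_0$ must commute with $\tx{diag}\left(0,-\lambda,\lambda\right)$. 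As $\lambda\neq0$ the eigenvalues $0,-\lambda,\lambda$ are pairwise distinct, so this centralizer consists only of diagonal matrices; and since $\Phi_0$ is fibered the first entry equals $1$, whence $\tx D_0\Phi_0=\tx{diag}\left(1,\theta_1,\theta_2\right)$ for some $\left(\theta_1,\theta_2\right)\in\left(\ww C^*\right)^2$. I would then set $\Phi:=\left(\tx D_0\Phi_0\right)^{-1}\circ\Phi_0$, which is fibered and tangent to the identity. It remains to check that $\Phi_*\left(Y\right)=\left(\left(\tx D_0\Phi_0\right)^{-1}\right)_*\left(Z_0\right)$ is still a normal form: the diagonal linear map leaves each $y_i\pp{y_i}$ invariant and rescales $v=y_1y_2$ by $\theta_1\theta_2$, so $\Phi_*\left(Y\right)$ is again of the form $\left(\ref{eq:eq: fibered normal form-1-2}\right)$ with the same $\lambda,a_1,a_2$ and with $c_1,c_2$ replaced by $c_i\circ\varphi_{\theta_1\theta_2}$; in particular $a_1+a_2=\res Y$ is untouched.

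For uniqueness, suppose $\Phi,\Phi'\in\fdiff$ are both tangent to the identity and conjugate $Y$ to normal forms $Z,Z'$ respectively. Then $\Psi:=\Phi'\circ\Phi^{-1}$ is fibered, tangent to the identity, and satisfies $\Psi_*\left(Z\right)=Z'$. Here I would appeal to the content of the proof of Theorem \ref{thm: uniqueness}, which establishes that \emph{any} element of $\fdiff$ conjugating two normal forms coincides with its own linear part $\tx{diag}\left(1,\theta_1,\theta_2\right)$ (the associated logarithm $G$ is shown there to vanish identically). Since $\Psi$ is tangent to the identity we have $\tx D_0\Psi=\tx{Id}$, forcing $\theta_1=\theta_2=1$ and hence $\Psi=\tx{Id}$. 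Therefore $\Phi'=\Phi$, and incidentally $Z'=Z$.

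The two substantive theorems being already available, the work here is essentially linear-algebra bookkeeping, and the point I would be most careful about is the reduction to the tangent-to-the-identity class. One must verify both that the residual diagonal factor $\tx{diag}\left(1,\theta_1,\theta_2\right)$ preserves the precise shape $\left(\ref{eq:eq: fibered normal form-1-2}\right)$ of the normal form (so that the construction really lands among admissible normal forms, with only $c_1,c_2$ altered by the homothecy $\varphi_{\theta_1\theta_2}$), and that the uniqueness argument genuinely delivers the \emph{linearity} of the conjugator rather than merely the equivalence of parameters — it is exactly this linearity, combined with tangency to the identity, that collapses $\Psi$ to $\tx{Id}$.
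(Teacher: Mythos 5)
Your argument is correct and is essentially the paper's own: the paper presents this corollary as an immediate consequence of Theorem \ref{Th: Th drsn} (existence, after absorbing the diagonal linear part) together with Proposition \ref{prop: fibered isotropies}, whose content is precisely the fact you extract from the proof of Theorem \ref{thm: uniqueness}, namely that a fibered conjugacy between two normal forms equals its own linear part $\tx{diag}\left(1,\theta_{1},\theta_{2}\right)$, so tangency to the identity forces it to be the identity. The only step to tighten is your claim that $Z_{0}:=\left(\Phi_{0}\right)_{*}\left(Y\right)$ automatically shares the linear part $\tx{diag}\left(0,-\lambda,\lambda\right)$ of $Y$: Theorem \ref{Th: Th drsn}, taken as a black box, only gives $\tx D_{0}Z_{0}=\tx{diag}\left(0,\mp\lambda,\pm\lambda\right)$, so in the swapped case you should first post-compose $\Phi_{0}$ with $\left(x,y_{1},y_{2}\right)\mapsto\left(x,y_{2},y_{1}\right)$ (which maps normal forms to normal forms) before your centralizer argument applies.
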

\begin{defn}
Let $Z\in\sns$. We denote by $\widehat{\mbox{Isot}}_{\omega}\left(Z\right)$
the subgroup of elements $\Phi\in\sdiff$ such that $\Phi_{*}\left(Z\right)=Z$.\end{defn}
\begin{prop}
\label{prop: isot symp}Let $\left(\lambda,a_{1},a_{2}\right)\in\ww C^{*}\times\ww C^{2}$
such that $a_{1}+a_{2}=1$, and $c\in v\form v$ with $v=y_{1}y_{2}$.
Consider 
\begin{eqnarray*}
Z & = & x^{2}\pp x+\left(-\left(\lambda+c\left(v\right)\right)+a_{1}x\right)y_{1}\pp{y_{1}}+\left(\lambda+c\left(v\right)+a_{2}x\right)y_{2}\pp{y_{2}}\,\,\,.
\end{eqnarray*}
Then: 
\begin{eqnarray*}
\widehat{\mbox{\ensuremath{\tx{Isot}}}}_{\omega}\left(Z\right) & = & \acc{\tx{diag}\left(1,\alpha,\frac{1}{\alpha}\right),\,\alpha\in\ww C\backslash\acc 0}\simeq\ww C\backslash\acc 0.
\end{eqnarray*}

\end{prop}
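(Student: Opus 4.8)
The plan is to exploit the inclusion $\sdiff\subset\fdiff$, which gives $\fisot[\omega]{Z}=\fisot{Z}\cap\sdiff$: a transversally symplectic self-conjugacy of $Z$ is in particular a fibered one. Since $Z$ is the transversally Hamiltonian normal form, that is, the normal form of Theorem \ref{Th: Th drsn} with $c_{1}=-c$ and $c_{2}=c$, Proposition \ref{prop: fibered isotropies} applies and tells us that every element of $\fisot{Z}$ is a linear diagonal map $\tx{diag}\left(1,\theta_{1},\theta_{2}\right)$, with $\left(\theta_{1},\theta_{2}\right)\in\left(\ww C^{*}\right)^{2}$ subject to the sole constraint $c\left(\theta_{1}\theta_{2}v\right)=c\left(v\right)$. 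Thus I only need to determine which of these maps are transversally symplectic.

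First I would compute the action of such a diagonal map on $\omega=\frac{\mbox{d}y_{1}\wedge\mbox{d}y_{2}}{x}$. Writing $\Phi=\tx{diag}\left(1,\theta_{1},\theta_{2}\right)$, so that $\Phi^{*}\left(x\right)=x$, $\Phi^{*}\left(y_{1}\right)=\theta_{1}y_{1}$ and $\Phi^{*}\left(y_{2}\right)=\theta_{2}y_{2}$, one immediately gets $\Phi^{*}\left(\omega\right)=\theta_{1}\theta_{2}\,\omega$, whence
\[
x\Phi^{*}\left(\omega\right)-x\omega=\left(\theta_{1}\theta_{2}-1\right)\mbox{d}y_{1}\wedge\mbox{d}y_{2}\,\,.
\]
Because $\mbox{d}y_{1}\wedge\mbox{d}y_{2}\notin\ps{\tx dx}$, the defining relation $x\Phi^{*}\left(\omega\right)\in x\omega+\ps{\tx dx}$ for membership in $\sdiff$ holds precisely when $\theta_{1}\theta_{2}=1$. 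This elementary computation is the heart of the proof and the only real obstacle: it is exactly the transversally symplectic constraint that forces $\theta_{2}=1/\theta_{1}$, collapsing the two free parameters of $\fisot{Z}$ to a single one.

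Conversely, for any $\alpha\in\ww C^{*}$ I would consider $\Phi_{\alpha}:=\tx{diag}\left(1,\alpha,\frac{1}{\alpha}\right)$. The computation above, with $\theta_{1}\theta_{2}=1$, shows $\Phi_{\alpha}\in\sdiff$; and since $\theta_{1}\theta_{2}=1$ the invariance condition $c\left(\theta_{1}\theta_{2}v\right)=c\left(v\right)$ of Proposition \ref{prop: fibered isotropies} is automatic, so $\Phi_{\alpha}\in\fisot{Z}$, hence $\Phi_{\alpha}\in\fisot[\omega]{Z}$. Combining the two inclusions yields
\[
\fisot[\omega]{Z}=\acc{\tx{diag}\left(1,\alpha,\tfrac{1}{\alpha}\right)\mid\alpha\in\ww C^{*}}\,\,.
\]
Finally, since $\Phi_{\alpha}\circ\Phi_{\beta}=\Phi_{\alpha\beta}$, the map $\alpha\mapsto\Phi_{\alpha}$ is a group isomorphism from $\ww C^{*}$ onto $\fisot[\omega]{Z}$, which is the stated identification $\fisot[\omega]{Z}\simeq\ww C\backslash\acc 0$. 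The only point requiring care is the correct transcription of Proposition \ref{prop: fibered isotropies} to this specific normal form (with $c_{1}=-c_{2}=-c$); once that is in place, the symplectic condition does all the remaining work.
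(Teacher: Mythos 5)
Your proof is correct and follows essentially the same route as the paper: the paper also obtains this proposition by combining the description of the fibered isotropies (Proposition \ref{prop: fibered isotropies}, itself read off from the uniqueness proof) with the observation that a diagonal map $\tx{diag}\left(1,\theta_{1},\theta_{2}\right)$ is transversally symplectic exactly when $\theta_{1}\theta_{2}=1$, which simultaneously makes the invariance condition on $c$ automatic.
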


\section{Applications to Painlevé equations}

In this section we investigate the study of the irregular singularity
at infinity in the first Painlevé equation 
\begin{eqnarray*}
\left(P_{I}\right)\,\,\,\,\,\,\,\,\,\,\,\,\,\,\,\,\,\,\,\,\,\ddd{^{2}z_{1}}{t^{2}} & = & 6z_{1}^{2}+t\,\,\,\,\,\,\,\,\,\,\,\,\,\,
\end{eqnarray*}
in terms of Theorem \ref{thm: Th ham}. More precisely, we are going
to explain that the formal invariant $c\in\form v$ of a doubly-resonant,
transversally symplectic saddle-node $Y\in\sns$ is in fact a germ
of an analytic function at the origin, whenever $Y$ is analytic at
the origin (and not merely a formal vector field). Moreover, we show
how to compute recursively this invariant in some specific cases,
including Painlevé equations.

\subsection{Asymptotically Hamiltonian vector fields}

We deal here with the case of \textbf{asymptotically Hamiltonian}
vector fields.
\begin{defn}
\label{def: asympt hamil}~
\begin{itemize}
\item We say that a formal vector field $X$ in $\left(\ww C^{2},0\right)$
is \textbf{orbitally linear} if 
\[
X=U\left(\mathbf{y}\right)\left(\lambda_{1}y_{1}\pp{y_{1}}+\lambda_{2}y_{2}\pp{y_{2}}\right)\,\,,
\]
for some unity ${\displaystyle U\left(\mathbf{y}\right)\in\form{\mathbf{y}}^{\times}}$
(\emph{i.e.} $U\left(0,0\right)\neq0$) and $\left(\lambda_{1},\lambda_{2}\right)\in\ww C^{2}$.
\item We say that a formal (\emph{resp. }germ of an analytic) vector field
$X$ in $\left(\ww C^{2},0\right)$ is formally (\emph{resp. }analytically\emph{)}
\textbf{orbitally linearizable} if $X$ is formally (\emph{resp.}
analytically) conjugate to an orbitally linear vector field.
\item We say that a doubly-resonant saddle-node $Y\in\sn$ is formally/analytically
\textbf{asymptotically orbitally linearizable} if the formal/analytic
vector field ${\displaystyle Y_{\mid\acc{x=0}}}$ in $\left(\ww C^{2},0\right)$
is formally/analytically orbitally linearizable. 
\end{itemize}
\end{defn}
\begin{rem}
~
\begin{enumerate}
\item If a vector field $X$ is analytic at the origin of $\ww C^{2}$ and
has two opposite eigenvalues, it follows from a classical result of
Brjuno (see \cite{MR647488}), that $X$ is analytically orbitally
linearizable if and only if it is formally orbitally linearizable.
\item The fact of being orbitally linearizable is naturally invariant under
orbital equivalence, and then, by (almost) uniqueness of $c_{1},c_{2}$
in Theorem \ref{thm: uniqueness}, if $Y\in\snnd$ is asymptotically
linearizable, then its formal invariants $c_{1},c_{2}$ satisfy $c_{1}+c_{2}=0$.
In this case, we write $c:=c_{2}=-c_{1}$.
\end{enumerate}
\end{rem}
The two remarks above imply the following corollary.
\begin{cor}
\label{cor: normalisation asympt ham}Let $Y\in\snnd$ be a doubly-resonant
saddle-node asymptotically orbitally linearizable such that $Y_{0}:=Y_{\mid\acc{x=0}}$
be a germ of an analytic vector field in $\left(\mbox{\ensuremath{\ww C^{2}}},0\right)$.
Then, there exists $\Phi\in\fdiff$ such that $\Phi_{\mid\acc{x=0}}$
be a germ an analytic diffeomorphism in $\left(\ww C^{2},0\right)$
and: 
\begin{eqnarray*}
\Phi_{*}\left(Y\right) & = & x^{2}\pp x+\left(-\lambda+a_{1}x-c\left(v\right)\right)y_{1}\pp{y_{1}}+\left(\lambda+a_{2}x+c\left(v\right)\right)y_{2}\pp{y_{2}}\,\,\,\,,
\end{eqnarray*}
where we put $v:=y_{1}y_{2}$. Here, $c\left(v\right)\in v\ww C\left\{ v\right\} $
is a germ of an analytic function vanishing at the origin, and $a_{1},a_{2}\in\ww C$
are such that $a_{1}+a_{2}=\res Y$. Moreover, $\Phi$ is unique up
to linear transformations.
\end{cor}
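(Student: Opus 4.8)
The strategy is to reduce the whole problem to the two-dimensional restriction $Y_{0}:=Y_{\mid\acc{x=0}}$, exploiting that the formal invariant $c$ is entirely governed by the orbital class of $Y_{0}$. By the second of the two remarks preceding the statement, the hypothesis of asymptotic orbital linearizability forces the formal invariants of $Y$ to satisfy $c_{1}+c_{2}=0$; writing $c:=c_{2}=-c_{1}$, the restriction to $\acc{x=0}$ of the normal form produced by Theorem \ref{Th: Th drsn} becomes $\left(-\lambda-c\left(v\right)\right)y_{1}\pp{y_{1}}+\left(\lambda+c\left(v\right)\right)y_{2}\pp{y_{2}}=\left(\lambda+c\left(v\right)\right)S\left(0,-1,1\right)$, which is visibly orbitally linear. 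All that remains is to prove that $c$ is convergent and that the normalization can be carried out analytically along the fibre $\acc{x=0}$.

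For the analyticity of $c$ I would first invoke Brjuno's theorem (the first remark above): since $Y_{0}$ is analytic with opposite eigenvalues $\pm\lambda$ and, by the hypothesis together with that theorem, is analytically orbitally linearizable, there is a germ of analytic diffeomorphism $\psi_{1}$ of $\left(\ww C^{2},0\right)$ with $\left(\psi_{1}\right)_{*}\left(Y_{0}\right)=U\left(\mathbf{y}\right)S\left(0,-1,1\right)$ for some analytic unit $U$, $U\left(0\right)=\lambda$. I would then reduce the multiplier $U\left(\mathbf{y}\right)$ to a function of $v=y_{1}y_{2}$ alone by a further conjugacy $\exp\left(B\left(\mathbf{y}\right)S\left(0,-1,1\right)\right)$. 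Because $\cal L_{S\left(0,-1,1\right)}\left(y_{1}^{k_{1}}y_{2}^{k_{2}}\right)=\left(k_{2}-k_{1}\right)y_{1}^{k_{1}}y_{2}^{k_{2}}$, the cohomological equations that cancel the non-resonant part of $U$ have divisors $\lambda\left(k_{2}-k_{1}\right)$ with $k_{1}\neq k_{2}$, all of modulus at least $\abs{\lambda}>0$. The absence of small divisors makes the normalizing series converge, yielding an analytic $\psi_{0}$ with $\left(\psi_{0}\right)_{*}\left(Y_{0}\right)=\left(\lambda+c\left(v\right)\right)S\left(0,-1,1\right)$ and $c\in v\ww C\left\{ v\right\}$. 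This convergence of the multiplier reduction is the heart of the matter and the step I expect to be the main obstacle, it being exactly where the opposite eigenvalues (hence the absence of small divisors) are essential.

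Next I would extend $\psi_{0}$ to the $x$-independent fibered diffeomorphism $\Psi_{0}\left(x,\mathbf{y}\right):=\left(x,\psi_{0}\left(\mathbf{y}\right)\right)\in\fdiff$, which is analytic and restricts to $\psi_{0}$ on $\acc{x=0}$. Since $\Psi_{0}$ fixes and is independent of $x$, the conjugate $Y^{\left(1\right)}:=\left(\Psi_{0}\right)_{*}\left(Y\right)$ is again a non-degenerate doubly-resonant saddle-node with the same residue, and its restriction to $\acc{x=0}$ equals $\left(\psi_{0}\right)_{*}\left(Y_{0}\right)=\left(\lambda+c\left(v\right)\right)S\left(0,-1,1\right)$. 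In particular the $x$-free part of the monomial expansion of $Y^{\left(1\right)}$ consists only of the admissible resonant monomials $\left(y_{1}y_{2}\right)^{k}S\left(0,-1,1\right)$ and contains no non-resonant term.

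Finally I would run the two-step elimination from the proof of Theorem \ref{Th: Th drsn} on $Y^{\left(1\right)}$, observing that it now calls only for generators $\exp\left(x^{i_{0}}y_{1}^{i_{1}}y_{2}^{i_{2}}S\left(0,\eta\right)\right)$ with $i_{0}\geq1$: every monomial still to be removed carries a positive power of $x$, and since $i_{0}\geq1$, Lemma \ref{lem: Lie bracket in log notation} shows that each bracket against such a generator produces only monomials of $x$-degree $\geq1$, so the $x$-free part of the field is never created nor altered. Such generators restrict to the identity on $\acc{x=0}$, hence (by Lemma \ref{lem: diffeo formel infinite composition}) the resulting formal fibered diffeomorphism $\Phi'$ satisfies $\Phi'_{\mid\acc{x=0}}=\tx{Id}$. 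Setting $\Phi:=\Phi'\circ\Psi_{0}\in\fdiff$ then yields $\Phi_{*}\left(Y\right)$ in the announced normal form with $\Phi_{\mid\acc{x=0}}=\psi_{0}$ an analytic diffeomorphism, while the relation $a_{1}+a_{2}=\res Y$ and the uniqueness of $\Phi$ up to linear transformations are inherited from Theorem \ref{Th: Th drsn} and Corollary \ref{cor: uniqueness tangent to id}.
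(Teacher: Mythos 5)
Your first two steps are essentially sound and parallel what the paper's two preliminary remarks encode: Brjuno's theorem gives an analytic orbital linearization of $Y_{0}$, and the reduction of the unit $U\left(\mathbf{y}\right)$ to a function of $v$ involves only divisors $\lambda\left(k_{2}-k_{1}\right)$, $k_{1}\neq k_{2}$, bounded away from $0$ (the convergence of that reduction still needs a majorant argument, but it is classical). The genuine gap is in your last step. It is false that the residual normalization of $Y^{\left(1\right)}$ ``calls only for generators $\exp\left(x^{i_{0}}y_{1}^{i_{1}}y_{2}^{i_{2}}S\left(0,\eta\right)\right)$ with $i_{0}\geq1$''. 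That is correct for step 1, where a non-resonant term is cancelled through its bracket with $\lambda S\left(0,-1,1\right)$, which preserves the multi-index. But a \emph{resonant} monomial field $x^{i_{0}}\left(y_{1}y_{2}\right)^{i}S\left(0,\eta\right)$ commutes with $\lambda S\left(0,-1,1\right)$, so by Lemma \ref{lem: Lie bracket in log notation} the only bracket available to cancel a resonant term is the one with $xS\left(1,a_{1},a_{2}\right)$, namely $\cro{x^{i_{0}}\left(y_{1}y_{2}\right)^{i}S\left(0,\eta\right),xS\left(1,a_{1},a_{2}\right)}=-\left(i_{0}+i\left(a_{1}+a_{2}\right)\right)x^{i_{0}+1}\left(y_{1}y_{2}\right)^{i}S\left(0,\eta\right)$, which \emph{raises the $x$-degree by one}. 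Consequently the terms $x\left(y_{1}y_{2}\right)^{k}S\left(0,\mu_{\mathbf{k}}\right)$, $k\geq1$ --- which are in general present in $Y^{\left(1\right)}$, since normalizing the restriction to $\acc{x=0}$ constrains nothing at $x$-degree one --- can only be removed by generators $\exp\left(\left(y_{1}y_{2}\right)^{k}S\left(0,\eta_{k}\right)\right)$ with $i_{0}=0$, exactly as in the paper's own step 2.

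These $i_{0}=0$ generators do not restrict to the identity on $\acc{x=0}$, so the conclusion $\Phi'_{\mid\acc{x=0}}=\tx{Id}$ fails. Worse, unless $\eta_{1,k}+\eta_{2,k}=0$ (which is precisely the transversally Hamiltonian situation), they alter the $x$-free part itself: $\cro{\left(y_{1}y_{2}\right)^{k}S\left(0,\eta\right),\tilde{c}_{m}\left(y_{1}y_{2}\right)^{m}S\left(0,-1,1\right)}=m\left(\eta_{1}+\eta_{2}\right)\tilde{c}_{m}\left(y_{1}y_{2}\right)^{k+m}S\left(0,-1,1\right)$, so the invariant produced at the end of step 2 is not $\tilde{c}$ but $\tilde{c}$ composed with the formal leaf-space map induced by the restriction of $\Phi_{2}$. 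That restriction is an infinite composition $\prod_{k\geq1}\exp\left(\left(y_{1}y_{2}\right)^{k}S\left(0,\eta_{k}\right)\right)_{\mid\acc{x=0}}$ of analytic maps which, as a Krull limit, is a priori only a formal diffeomorphism. Hence neither the analyticity of $\Phi_{\mid\acc{x=0}}$ nor that of $c$ follows from your argument; this is the actual crux of the corollary, the point where the paper leans on Brjuno-type convergence (and, in the symplectic setting, on the vanishing $\eta_{1}+\eta_{2}=0$ and the period-map computation of Section 4), and your proof as written does not close it.
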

It is important to notice that the following property holds.
\begin{prop}
\label{prop: trans ham implique asymp ham}If $Y\in\sns$ is doubly-resonant
transversally Hamiltonian saddle-node, then $Y$ is asymptotically
orbitally linearizable. \end{prop}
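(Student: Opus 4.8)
The plan is to apply the transversally Hamiltonian normal form of Theorem \ref{thm: Th ham}, restrict it to the fiber $\acc{x=0}$, observe that this restriction is already orbitally linear, and transport this property back to $Y$ along the (fibered) normalizing diffeomorphism.

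First I would invoke Theorem \ref{thm: Th ham}: since $Y\in\sns$, there is a transversally symplectic $\Phi\in\sdiff$ with
\[
Z:=\Phi_*(Y)=x^2\pp x+\left(-\lambda+a_1x-c\left(v\right)\right)y_1\pp{y_1}+\left(\lambda+a_2x+c\left(v\right)\right)y_2\pp{y_2},
\]
where $v=y_1y_2$, $c\in v\form v$ and $a_1+a_2=1$. The decisive feature here --- in contrast with the general non-degenerate normal form of Theorem \ref{Th: Th drsn} --- is that the two $v$-dependent coefficients are exactly opposite. Restricting $Z$ to the fiber $\acc{x=0}$ (which is invariant, its $x$-component $x^2$ vanishing there) yields
\[
Z_{\mid\acc{x=0}}=\left(-\lambda-c\left(v\right)\right)y_1\pp{y_1}+\left(\lambda+c\left(v\right)\right)y_2\pp{y_2}=\left(\lambda+c\left(v\right)\right)\left(-y_1\pp{y_1}+y_2\pp{y_2}\right).
\]
Because $c\left(0\right)=0$ and $\lambda\neq0$, the factor $\lambda+c\left(v\right)$ is a unit of $\form{\mathbf y}$; hence $Z_{\mid\acc{x=0}}$ is orbitally linear in the sense of Definition \ref{def: asympt hamil}, with $\left(\lambda_1,\lambda_2\right)=\left(-1,1\right)$.

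It then remains to transfer orbital linearity from $Z_{\mid\acc{x=0}}$ to $Y_{\mid\acc{x=0}}$. Since $\Phi\in\sdiff\subset\fdiff$ is fibered, it reads $\Phi\left(x,\mathbf y\right)=\left(x,\phi\left(x,\mathbf y\right)\right)$ and preserves $\acc{x=0}$; its restriction $\phi_0:=\phi\left(0,\cdot\right)$ is a formal diffeomorphism of $\left(\ww C^2,0\right)$, because invertibility of $\mbox{D}_0\Phi$ forces its $\mathbf y$-block $\mbox{D}_0\phi_0$ to lie in $\tx{GL}_2\left(\ww C\right)$. A direct computation of $\Phi_*(Y)=\left(\mbox{D}\Phi\cdot Y\right)\circ\Phi^{-1}$ evaluated along $\acc{x=0}$ then gives $Z_{\mid\acc{x=0}}=\left(\phi_0\right)_*\left(Y_{\mid\acc{x=0}}\right)$, so that $Y_{\mid\acc{x=0}}$ is formally conjugate to an orbitally linear field, i.e. formally orbitally linearizable; this is exactly the claim that $Y$ is asymptotically orbitally linearizable.

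The main (and essentially only) obstacle is justifying the commutation of restriction with the fibered pushforward, namely $Z_{\mid\acc{x=0}}=\left(\phi_0\right)_*\left(Y_{\mid\acc{x=0}}\right)$. The observations that make this routine are that the $x$-component of $Y$ equals $x^2$, which kills the $\partial_x\phi$ contribution on the fiber $\acc{x=0}$, and that $\Phi^{-1}$ is again fibered, so that its restriction to $\acc{x=0}$ is precisely $\phi_0^{-1}$; orbital linearizability then passes across the conjugacy since it is by definition invariant under conjugacy.
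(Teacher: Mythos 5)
Your proof is correct, and it takes a genuinely different route from the paper's. You obtain the proposition as a quick corollary of the classification: invoke Theorem \ref{thm: Th ham} (which is proved in Section 3, independently of this proposition, so there is no circularity), restrict the transversally symplectic normal form to the invariant fiber $\acc{x=0}$, note that the opposite signs of the two $c$-terms make this restriction equal to $\left(\lambda+c\left(v\right)\right)\left(-y_{1}\pp{y_{1}}+y_{2}\pp{y_{2}}\right)$, hence orbitally linear, and transport the property back through the fiber restriction $\phi_{0}$ of the fibered normalization; the commutation $\left(\Phi_{*}\left(Y\right)\right)_{\mid\acc{x=0}}=\left(\phi_{0}\right)_{*}\left(Y_{\mid\acc{x=0}}\right)$ is indeed routine, for exactly the reasons you give (the $x$-component of any element of $\sn$ is $x^{2}$, which kills the $\partial\phi/\partial x$ contribution on the fiber, and $\Phi^{-1}$ is again fibered with fiber restriction $\phi_{0}^{-1}$). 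The paper argues instead directly on the fiber, without using the normal form: transversal Hamiltonianity gives $\cal L_{Y_{0}}\left(\tx dy_{1}\wedge\tx dy_{2}\right)=0$, so $Y_{0}=J\nabla H$ for some $H\in\lambda y_{1}y_{2}+\mathfrak{m}^{3}$; the Morse lemma straightens $H$ to $y_{1}y_{2}$, and the identity $\varphi_{*}\left(J\nabla H\right)=\det\left(\tx D\varphi\circ\varphi^{-1}\right)J\nabla\left(H\circ\varphi^{-1}\right)$ exhibits the orbital linearization. The trade-off is worth noting: your argument is shorter and recycles the hard theorem, but the conjugacy it produces is only formal, since the normalizing $\Phi$ of Theorem \ref{thm: Th ham} is a formal diffeomorphism; in the analytic situation exploited afterwards (Corollaries \ref{cor: normalisation asympt ham} and \ref{cor: normalisation analytic symplectique}) one must then additionally invoke Brjuno's theorem, as in the remark following Definition \ref{def: asympt hamil}, to upgrade formal to analytic orbital linearizability of $Y_{0}$. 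The paper's Morse-lemma proof yields an analytic linearization directly whenever $Y_{0}$ is analytic, and moreover produces the Hamiltonian $H$ and the explicit structure $Y_{0}=J\nabla H$ on which the period-map computations of the next subsection are built.
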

\begin{proof}
The facts that $\cal L_{Y}\left(\omega\right)\in\ps{\mbox{d}x}$ and
$\cal L_{Y}\left(x\right)=x^{2}$ imply that $a_{1}+a_{2}=1$ and
then: 
\begin{eqnarray*}
\cal L_{Y}\left(\tx dy_{1}\wedge\tx dy_{2}\right) & = & x\left(\tx dy_{1}\wedge\tx dy_{2}\right)+\ps{\mbox{d}x}\,\,\,\,.
\end{eqnarray*}
Consequently, if we denote $Y_{0}:=Y_{\mid x=0}$ the restriction
of $Y$ to the invariant hypersurface $\acc{x=0}$, we have: 
\[
\cal L_{Y_{0}}\left(\tx dy_{1}\wedge\tx dy_{2}\right)=0\,\,.
\]
This means that $Y_{0}$ is a Hamiltonian vector field, \emph{i.e.}
there exists $H\left(\mathbf{y}\right)\in\form{\mathbf{y}}$ such
that: 
\[
Y_{0}\left(y_{1},y_{2}\right)=-\ppp H{y_{2}}\left(y_{1},y_{2}\right)\pp{y_{1}}+\ppp H{y_{1}}\left(y_{1},y_{2}\right)\pp{y_{2}}\,\,.
\]
Possibly by performing a linear change of coordinate, we can assume
that $H\left(\mathbf{y}\right)\in\lambda y_{1}y_{2}+\mathfrak{m}^{3}$,
therefore we can write: 
\[
Y=x^{2}\pp x+\left(-\ppp H{y_{2}}+xF_{1}\left(x,\mathbf{y}\right)\right)\pp{y_{1}}+\left(\ppp H{y_{1}}+xF_{2}\left(x,\mathbf{y}\right)\right)\pp{y_{2}}\,\,\,,
\]
where $F_{1},F_{2}\in\form{x,\mathbf{y}}$ vanish at the origin. If
we define ${\displaystyle J:=\left(\begin{array}{cc}
0 & -1\\
1 & 0
\end{array}\right)\in M_{2}\left(\ww C\right)}$ and $\nabla H:=\,^{t}\left(\mbox{D}H\right)$, then ${\displaystyle Y_{\mid\acc{x=0}}=J\nabla H}$.
According to the Morse lemma for holomorphic functions, there exists
an analytic change of coordinates $\varphi\in\diff{}$ in $\left(\ww C^{2},0\right)$
tangent to the identity such that ${\displaystyle {\displaystyle \widetilde{H}\left(\mathbf{y}\right):=H\left(\varphi^{-1}\left(\mathbf{y}\right)\right)=y_{1}y_{2}}}$.
Let us now recall a trivial result from linear algebra.
\begin{fact*}
Let ${\displaystyle J:=\left(\begin{array}{cc}
0 & -1\\
1 & 0
\end{array}\right)\in M_{2}\left(\ww C\right)}$, and $P\in M_{2}\left(\ww C\right)$. Then, ${\displaystyle PJP^{\mathrm{t}}=\mbox{det}\left(P\right)J}$.
\end{fact*}
We deduce the next result. 
\begin{lem*}
\label{lem: hamiltonian system}Let $H\in\mathfrak{m}^{2}\subset\form{\mathbf{y}}$,
$Y_{0}:=J\nabla H$ the associated Hamiltonian vector field in $\ww C^{2}$
(for the standard symplectic form $dy_{1}\wedge dy_{2}$), and an
analytic diffeomorphism near the origin denoted by $\varphi$. Then:
\begin{eqnarray*}
\varphi_{*}\left(Y_{0}\right) & := & \left(\tx D\varphi\circ\varphi^{-1}\right)\cdot\left(Y_{0}\circ\varphi^{-1}\right)=\det\left(\tx D\varphi\circ\varphi^{-1}\right)J\nabla\widetilde{H}\,\,\,\,,
\end{eqnarray*}
where $\widetilde{H}:=H\circ\varphi^{-1}$.
\end{lem*}
As a conclusion,the previous lemma shows that $Y$ is asymptotically
orbitally linearizable.
\end{proof}
The next property is a straightforward consequence of Corollary \ref{cor: normalisation asympt ham},
Proposition \ref{prop: trans ham implique asymp ham} and Theorem
\ref{thm: Th ham}.
\begin{cor}
\label{cor: normalisation analytic symplectique}Let $Y\in\sns$ be
a transversally Hamiltonian doubly-resonant saddle-node. Then, there
exists a transversally symplectic diffeomorphism $\Phi\in\sdiff$
such that $\Phi_{\mid\acc{x=0}}$ be a germ an analytic diffeomorphism
in $\left(\ww C^{2},0\right)$ and: 
\begin{eqnarray}
\Phi_{*}\left(Y\right)= & x^{2}\pp x+\left(-\lambda+a_{1}x-c\left(v\right)\right)y_{1}\pp{y_{1}}+\left(\lambda+a_{2}x+c\left(v\right)\right)y_{2}\pp{y_{2}}\,\,\,\,.\label{eq: fibered normal form-1-1-1}
\end{eqnarray}
where we put $v:=y_{1}y_{2}$. Here, $c\left(v\right)\in v\ww C\left\{ v\right\} $
is a germ of an analytic function vanishing at the origin, and $a_{1},a_{2}\in\ww C$
are such that $a_{1}+a_{2}=\res Y=1$. Moreover, $\Phi$ is unique
up to linear symplectic transformations, and:
\begin{eqnarray*}
\left(\Phi_{\mid\acc{x=0}}\right)^{*}\left(\mbox{d}y_{1}\wedge\mbox{d}y_{2}\right) & = & \mbox{d}y_{1}\wedge\mbox{d}y_{2}~.
\end{eqnarray*}

\end{cor}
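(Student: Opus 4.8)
The plan is to obtain $\Phi$ by reconciling two \emph{a priori} distinct normalizations of $Y$: the transversally symplectic one furnished by Theorem~\ref{thm: Th ham}, whose invariant $c$ is at first known only to be formal, and the one furnished by Corollary~\ref{cor: normalisation asympt ham}, whose restriction to $\acc{x=0}$ is analytic but which carries no symplectic information. The bridge between the two is the rigidity encoded in Theorem~\ref{thm: uniqueness}: any fibered conjugacy between two normal forms is forced to be a linear diagonal map, and a linear map destroys neither analyticity along $\acc{x=0}$ nor the transversal symplectic identity on that fiber. Throughout I assume, as in the cited statements, that $Y$ is analytic at the origin, so that $Y_{0}:=Y_{\mid\acc{x=0}}$ is a germ of an analytic vector field in $\left(\ww C^{2},0\right)$.

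Concretely, I would first apply Theorem~\ref{thm: Th ham} to produce $\Phi_{1}\in\sdiff$ with $\left(\Phi_{1}\right)_{*}\left(Y\right)=Z_{c}$, the transversally symplectic normal form with some $c\in v\form v$ and $a_{1}+a_{2}=\res Y=1$. Next, Proposition~\ref{prop: trans ham implique asymp ham} shows that $Y$ is asymptotically orbitally linearizable; since $Y_{0}$ is analytic, Corollary~\ref{cor: normalisation asympt ham} applies and yields $\Phi_{2}\in\fdiff$ whose restriction $\left(\Phi_{2}\right)_{\mid\acc{x=0}}$ is a germ of an analytic diffeomorphism, with $\left(\Phi_{2}\right)_{*}\left(Y\right)=Z_{c'}$ for some \emph{analytic} $c'\in v\ww C\left\{ v\right\} $. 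I would then consider $\Psi:=\Phi_{1}\circ\Phi_{2}^{-1}\in\fdiff$, which satisfies $\Psi_{*}\left(Z_{c'}\right)=Z_{c}$ and so conjugates two normal forms of $\snnd$. By Theorem~\ref{thm: uniqueness} (in the precise diagonal form recorded in its proof and in Corollary~\ref{cor: uniqueness tangent to id}) the map $\Psi$ is necessarily linear, $\Psi=\tx{diag}\left(1,\theta_{1},\theta_{2}\right)$, and the two invariants coincide up to a homothety $\varphi_{\theta}:v\mapsto\theta v$ with $\theta\in\ww C^{*}$.

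Two conclusions follow at once. Since $c'$ is analytic and $\varphi_{\theta}$ preserves analyticity, $c$ is a germ of an analytic function vanishing at the origin; and since $\Phi_{1}=\Psi\circ\Phi_{2}$ with $\Psi$ linear and $\left(\Phi_{2}\right)_{\mid\acc{x=0}}$ analytic, the restriction $\left(\Phi_{1}\right)_{\mid\acc{x=0}}$ is analytic as well. Thus $\Phi:=\Phi_{1}$ is simultaneously transversally symplectic and analytic along the fiber, giving the stated normal form. The final identity is then immediate: being transversally symplectic, $\Phi$ satisfies $\Phi^{*}\left(\tx dy_{1}\wedge\tx dy_{2}\right)=x\Phi^{*}\left(\omega\right)\in\tx dy_{1}\wedge\tx dy_{2}+\ps{\tx dx}$, and restricting to $\acc{x=0}$ (where $\tx dx=0$) gives $\left(\Phi_{\mid\acc{x=0}}\right)^{*}\left(\tx dy_{1}\wedge\tx dy_{2}\right)=\tx dy_{1}\wedge\tx dy_{2}$; uniqueness up to linear symplectic transformations comes from the description of the symplectic isotropies in Proposition~\ref{prop: isot symp}. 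The single delicate point — and the only genuine obstacle — is that neither cited normalization by itself delivers \emph{both} properties required of $\Phi$; the rigidity step identifying $\Psi$ as a linear map is exactly what forces the symplectic normalization to be already analytic along $\acc{x=0}$ and its formal invariant to agree, up to scaling, with the analytic $c'$.
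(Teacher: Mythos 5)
Your proposal is correct and follows essentially the same route as the paper, which states this corollary as a ``straightforward consequence'' of exactly the three ingredients you combine: Theorem~\ref{thm: Th ham}, Proposition~\ref{prop: trans ham implique asymp ham} and Corollary~\ref{cor: normalisation asympt ham}, glued together by the rigidity of Theorem~\ref{thm: uniqueness}. Your write-up merely makes explicit the reconciliation step (the linear diagonal map $\Psi$, up to the possible $\nicefrac{\ww Z}{2\ww Z}$ swap handled by a preliminary linear change) that the paper leaves to the reader.
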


\subsection{Periods of the Hamiltonian on $\protect\acc{x=0}$}

From now on, we consider a vector field 
\[
Y=x^{2}\pp x+\left(\left(-\ppp H{y_{2}}+xF_{1}\left(x,\mathbf{y}\right)\right)\pp{y_{1}}+\left(\ppp H{y_{1}}+xF_{2}\left(x,\mathbf{y}\right)\right)\pp{y_{2}}\right)\,\,\,,
\]
with $H\left(\mathbf{y}\right)=\lambda y_{1}y_{2}+\underset{\mathbf{y}\rightarrow0}{O}\left(\left\Vert \mathbf{y}\right\Vert ^{3}\right)$
analytic at the origin of $\ww C^{2}$, and $F_{1},F_{2}\in\form{x,\mathbf{y}}$
vanishing at the origin. Let us consider the restriction $Y_{0}:=Y_{\mid\acc{x=0}}$:
it is an analytic Hamiltonian vector field in $\left(\ww C^{2},0\right)$:
\[
Y_{0}=-\ppp H{y_{2}}\pp{y_{1}}+\ppp H{y_{1}}\pp{y_{2}}\,\,.
\]
We fix a small polydisc $\mathbf{D}\left(0,\mathbf{r}\right)\subset\ww C^{2}$
on which $H$ is analytic with $\mathbf{r}=\left(r_{1},r_{2}\right)$.
The leaves of the foliation defined by $Y_{0}$ in $\mathbf{D}\left(0,\mathbf{r}\right)$
are given by the level curves $L_{a}:=\acc{H=a}\cap\mathbf{D}\left(0,\mathbf{r}\right)$,
$a\in\mbox{D}\left(0,r\right)$, with $r>0$ small enough. The Morse
Lemma for holomorphic functions tells us that $L_{a}$ is topologically
a cylinder for $a\neq0$, and $r$,$r_{1},r_{2}$ small enough. Thus
we can consider a generator $\gamma_{a}$ of the first homology group
of $L_{a}$. We also consider a time-form for $Y_{0}$, which is a
meromorphic $1$-form $\tau_{Y_{0}}$ in $\mathbf{D}\left(0,\mathbf{r}\right)$
with a unique pole at the origin and such that $\tau_{Y_{0}}\cdot\left(Y_{0}\right)=1$.
For instance, take ${\displaystyle \tau_{Y_{0}}=-\frac{\mbox{d}y_{1}}{\ppp H{y_{2}}}}$.

Now we define the associated period map: 
\begin{eqnarray*}
T_{H}~:~\mbox{D}\left(0,r\right)\backslash\acc 0 & \longrightarrow & \ww C\\
a & \longmapsto & T_{H}\left(a\right):=\frac{1}{2i\pi}\oint_{\gamma_{a}}\tau_{Y_{0}}\,\,.
\end{eqnarray*}
This mapping is a well-defined meromorphic function of $a\in\mbox{D}\left(0,r\right)$.
\begin{prop}
For $r>0$ small enough, and $a\in\mbox{D}\left(0,r\right)\backslash\acc 0$,
$T_{Y_{0}}\left(a\right)$ only depends on the class of $\gamma_{a}$
in $H_{1}\left(L_{a},\ww Z\right)$. In other words, if $\tau'_{Y_{0}}$
is another time-form of $Y_{0}$ and $\gamma'_{a}$ is any loop in
$L_{a}$ homologous to $\gamma_{a}$, then 
\[
\oint_{\gamma_{a}}\tau_{Y_{0}}=\oint_{\gamma'_{a}}\tau'_{Y_{0}}\,\,\,.
\]
\end{prop}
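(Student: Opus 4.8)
The plan is to reduce both assertions — independence of the representative loop within its homology class, and independence of the time-form — to the single observation that, for $a\neq0$ small, any time-form restricts to a \emph{holomorphic} $1$-form on the level curve $L_a$. First I would record that, by the Morse Lemma already invoked, the origin is the only critical point of $H$ in $\mathbf{D}(0,\mathbf{r})$ and it is nondegenerate with critical value $0$; hence for $a\in\mbox{D}(0,r)\setminus\acc 0$ with $r>0$ small enough, $L_a$ is a smooth complex curve (a cylinder) which does not meet the origin. Since by definition a time-form $\tau_{Y_0}$ is meromorphic with its only pole at the origin, its pullback to $L_a$ under the inclusion is holomorphic.

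For the independence of the loop, I would use that a holomorphic $1$-form on a Riemann surface is automatically closed: locally it reads $f(z)\,\mbox{d}z$ with $f$ holomorphic, so $\mbox{d}(f\,\mbox{d}z)=\ppp f{\bar z}\,\mbox{d}\bar z\wedge\mbox{d}z=0$. Thus $\tau_{Y_0}$ restricts to a closed $1$-form on $L_a$, and by Stokes' theorem its integral over a $1$-cycle depends only on the class of that cycle in $H_1(L_a,\ww Z)$. In particular $\oint_{\gamma_a}\tau_{Y_0}=\oint_{\gamma'_a}\tau_{Y_0}$ whenever $\gamma'_a$ is homologous to $\gamma_a$.

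For the independence of the time-form, I would take two time-forms $\tau_{Y_0}$ and $\tau'_{Y_0}$ and set $\eta:=\tau_{Y_0}-\tau'_{Y_0}$. Since both contract with $Y_0$ to $1$, we have $\eta\cdot(Y_0)=0$, and $\eta$ is again meromorphic with pole only at the origin, hence holomorphic on $L_a$. Now $Y_0=J\nabla H$ is tangent to $L_a$ (as $H$ is a first integral) and, because $L_a$ carries no critical point of $H$ for $a\neq0$, it is nonvanishing there; thus $Y_0(p)$ spans the complex tangent line $T_pL_a$ at every $p\in L_a$. The vanishing $\eta\cdot(Y_0)=0$ therefore forces the restriction of $\eta$ to $L_a$ to be zero, so $\oint_{\gamma_a}\eta=0$ and $\oint_{\gamma_a}\tau_{Y_0}=\oint_{\gamma_a}\tau'_{Y_0}$. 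Chaining the two steps, $\oint_{\gamma'_a}\tau'_{Y_0}=\oint_{\gamma_a}\tau'_{Y_0}=\oint_{\gamma_a}\tau_{Y_0}$, which is the claim.

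The computations are routine; the only point requiring care is the passage from the ambient meromorphic geometry to the intrinsic geometry of the leaf. The crux is the reduction to a genuinely holomorphic $1$-form on the smooth Riemann surface $L_a$, which simultaneously makes the form closed (yielding homological invariance of the loop) and lets the single scalar condition $\eta\cdot(Y_0)=0$ annihilate the whole restriction (yielding independence of the time-form). Establishing that $L_a$ is smooth and avoids the origin — the input from the Morse Lemma — is precisely what guarantees both the holomorphy of the restricted form and the nonvanishing of $Y_0$ on which the argument rests.
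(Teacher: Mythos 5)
Your proof is correct and takes essentially the same approach as the paper: homological invariance via Stokes' theorem (after observing that the time-form restricts to a holomorphic, hence closed, $1$-form on $L_a$), and time-form independence from the fact that $\gamma_a$ lies in a leaf on which the nonvanishing field $Y_0$ spans the tangent line while $\tau_{Y_0}\cdot\left(Y_0\right)=1$. The only difference is presentational: the paper parametrizes the loop, writing $\frac{\mathrm{d}}{\mathrm{d}t}\gamma_a\left(t\right)=v_a\left(t\right)Y_0\left(\gamma_a\left(t\right)\right)$, and computes $\oint_{\gamma_a}\tau_{Y_0}=\int_0^1 v_a\left(t\right)\mathrm{d}t$, which is manifestly independent of the time-form, whereas you argue intrinsically that the difference $\eta=\tau_{Y_0}-\tau'_{Y_0}$ annihilates $Y_0$ and therefore restricts to zero on $L_a$ --- the same idea in different clothing.
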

\begin{proof}
The fact that this quantity does not depend on a specific choice of
a representative of $\gamma_{a}$ in its homology class comes from
Stokes Theorem. The fact that it does not depend on the choice of
a specific time-form comes from the fact that $\gamma_{a}$ lies in
a leaf of the foliation generated by $Y_{0}$. If 
\begin{eqnarray*}
\gamma_{a}:\cro{0,1} & \rightarrow & L_{a}\\
t & \mapsto & \left(\gamma_{a,1}\left(t\right),\gamma_{a,2}\left(t\right)\right)\,\,\,\,,
\end{eqnarray*}
then ${\displaystyle \frac{\mbox{d}}{\mbox{d}t}\left(\gamma_{a}\right)\left(t\right)}=v_{a}\left(t\right)Y_{0}\left(\gamma_{a}\left(t\right)\right)$,
where 
\[
v_{a}\left(t\right)=\frac{1}{\left(-\ppp H{y_{2}}\left(\gamma_{a}\left(t\right)\right)\right)}\ddd{\gamma_{a,1}\left(t\right)}t=\frac{1}{\left(\ppp H{y_{1}}\left(\gamma_{a}\left(t\right)\right)\right)}\ddd{\gamma_{a,2}\left(t\right)}t\qquad.
\]
Then: 
\begin{eqnarray*}
\oint_{\gamma_{a}}\tau_{Y_{0}} & = & \int_{0}^{1}\tau_{Y_{0}}\left(\gamma_{a}\left(t\right)\right)\cdot\left({\displaystyle \frac{\mbox{d}}{\mbox{d}t}\left(\gamma_{a}\right)\left(t\right)}\right)\mbox{d}t\\
 & = & \int_{0}^{1}\tau_{Y_{0}}\left(\gamma_{a}\left(t\right)\right)\cdot\left(v_{a}\left(t\right)Y_{0}\left(\gamma_{a}\left(t\right)\right)\right)\mbox{d}t\\
 & = & \int_{0}^{1}v_{a}\left(t\right)\mbox{d}t
\end{eqnarray*}
since $\tau_{Y_{0}}\cdot\left(Y_{0}\right)=1$.\end{proof}
\begin{defn}
We call $T_{H}$ the period map of $H$ near the origin. 
\end{defn}
Now, consider a germ of an analytic diffeomorphism $\Psi$ fixing
the origin of $\ww C^{2}$. Then: 
\begin{eqnarray*}
T_{H}\left(a\right) & =\frac{1}{2i\pi} & \oint_{\gamma_{a}}\tau_{Y_{0}}\\
 & =\frac{1}{2i\pi} & \oint_{\Psi^{-1}\left(\gamma_{a}\right)}\Psi^{*}\left(\tau_{Y_{0}}\right)\,\,\,.
\end{eqnarray*}
Notice that if we write $X_{0}:=\left(\Psi^{^{-1}}\right)_{*}\left(Y_{0}\right)$
and $\tau_{X_{0}}:=\Psi^{*}\left(\tau_{Y_{0}}\right)$, then: 
\[
\tau_{X_{0}}\cdot\left(X_{0}\right)=\left(\Psi^{*}\left(\tau_{Y_{0}}\right)\right)\cdot\left(\left(\Psi^{-1}\right)_{*}\left(Y_{0}\right)\right)=\tau_{Y_{0}}\cdot\left(Y_{0}\right)=1\,\,.
\]
Now, let us take $\Psi^{-1}=\Phi_{\mid\acc{x=0}}$ as in Corollary
\ref{cor: normalisation analytic symplectique} such that 
\[
{\displaystyle X_{0}=\left(\lambda+c\left(v\right)\right)\left(-y_{1}\pp{y_{1}}+y_{2}\pp{y_{2}}\right)\,\,,}
\]
with $v=y_{1}y_{2}$ , $c\in\ww C\acc v$ and $c\left(0\right)=0$.
Then $\tilde{\gamma}_{a}:=\Psi^{-1}\left(\gamma_{a}\right)=\Phi_{\mid\acc{x=0}}\left(\gamma_{a}\right)$
is a loop generating the homology of the leaf $\Phi_{\mid\acc{x=0}}\left(L_{a}\right)$. 

Consider $h:=H\circ\Psi$ near the origin. Then, $\Phi_{\mid\acc{x=0}}\left(L_{a}\right)=\acc{h=a}$
in a neighborhood of the origin. Notice that $h$ depends in fact
only on $v=y_{1}y_{2}$, and $h\left(v\right)=\lambda v+\underset{\left|v\right|\rightarrow0}{o}\left(\abs v\right)$.
Since $\lambda\neq0$, the inverse function theorem ensures the existence
of an analytic function $g\in\ww C\acc v$ such that $g\left(0\right)=0$
and $h\circ g\left(v\right)=g\circ h\left(v\right)=v$ in a neighborhood
of $0$. Thus, $\acc{h\left(v\right)=a}=\acc{v=g\left(a\right)}$.
Consequently, taking for instance ${\displaystyle \tau_{X_{0}}=-\frac{\mbox{d}y_{1}}{y_{1}\left(\lambda+c\left(v\right)\right)}}$,
we see that: 
\begin{eqnarray*}
T_{H}\left(a\right) & =\frac{1}{2i\pi} & \oint_{\tilde{\gamma}_{a}}\tau_{X_{0}}\\
 & = & \frac{1}{2i\pi}\frac{1}{\lambda+c\left(g\left(a\right)\right)}\oint_{\tilde{\gamma}_{a}}-\frac{\mbox{d}y_{1}}{y_{1}}\\
 & = & \frac{-1}{\lambda+c\left(g\left(a\right)\right)}
\end{eqnarray*}
according to the orientation chosen for $\gamma_{a}$. In particular,
we see that $T_{H}$ is analytic at the origin, and $T_{H}$ can be
extend at $0$ by $\frac{-1}{\lambda}$.

The fact that $\Phi_{\mid\acc{x=0}}$ satisfies 
\[
\left(\Phi_{\mid\acc{x=0}}\right)^{*}\left(\mbox{d}y_{1}\wedge\mbox{d}y_{2}\right)=\mbox{d}y_{1}\wedge\mbox{d}y_{2}
\]
 implies that $\det\left(\Phi_{\mid\acc{x=0}}\right)=1$, so that
\[
X_{0}=-\ppp h{y_{2}}\pp{y_{1}}+\ppp h{y_{1}}\pp{y_{2}}
\]
and
\[
\ddd hv=\lambda+c\left(v\right)\,\,\,.
\]
As a consequence, for $a=h\left(v\right)$, we have the following
relation: 
\[
\ddd hv\left(v\right).T_{H}\left(h\left(v\right)\right)=-1\,\,\,\,.
\]
If we consider the antiderivative $S_{H}$ of $T_{H}$ such that $S_{H}\left(0\right)=0$,
we have 
\[
S_{H}\left(h\left(v\right)\right)=-v\,\,\,,
\]
and in particular 
\[
S_{H}=-g\,\,.
\]

Let us summarize this study in the following proposition.
\begin{prop}
\label{prop: periode}Let 
\[
Y_{0}=-\ppp H{y_{2}}\pp{y_{1}}+\ppp H{y_{1}}\pp{y_{2}}
\]
be the restriction on $\acc{x=0}$ of a transversally Hamiltonian
doubly-resonant saddle-node $Y\in\sns$, where $H\left(\mathbf{y}\right)=\lambda y_{1}y_{2}+\underset{\mathbf{z}\rightarrow0}{o}\left(\left\Vert \mathbf{y}\right\Vert ^{2}\right)$
is analytic at the origin of $\ww C^{2}$. Consider its unique transversally
Hamiltonian normal form 
\[
X=x^{2}\pp x+\left(-\left(\lambda+c\left(v\right)\right)+a_{1}x\right)y_{1}\pp{y_{1}}+\left(\lambda+c\left(v\right)+a_{2}x\right)y_{2}\pp{y_{2}}
\]
given by Theorem \ref{thm: Th ham}. Consider the period map $T_{H}$
as defined above. Then the following holds: 
\begin{enumerate}
\item $c$ is the germ of an analytic function at the origin.
\item $T_{H}$ defines the germ of an analytic function in a neighborhood
of $0\in\ww C^{2}$, such that $T_{H}\left(0\right)=\frac{-1}{\lambda}$.
\item If $S_{H}$ is the primitive of $T_{H}$ such that $S_{H}\left(0\right)=0$,
then $\left(-S_{H}\right)$ is invertible (for the composition), and
its inverse $h$ satisfy: 
\[
\ddd hv\left(v\right)=\lambda+c\left(v\right)\,\,\,.
\]

\end{enumerate}
\end{prop}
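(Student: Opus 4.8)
The plan is to read off all three assertions from the explicit period computation performed immediately above, combined with the analytic normalization already at our disposal. First I would fix the normalizing diffeomorphism $\Phi\in\sdiff$ furnished by Corollary \ref{cor: normalisation analytic symplectique}: since $Y\in\sns$, Proposition \ref{prop: trans ham implique asymp ham} shows that $Y$ is asymptotically orbitally linearizable, and as $Y_{0}=Y_{\mid\acc{x=0}}$ is the restriction of an analytic vector field, that corollary produces a $\Phi$ whose restriction $\Phi_{\mid\acc{x=0}}$ is an analytic, area-preserving germ conjugating $Y_{0}$ to $X_{0}=\left(\lambda+c\left(v\right)\right)\left(-y_{1}\pp{y_{1}}+y_{2}\pp{y_{2}}\right)$ with $c\in v\form v$ a convergent series. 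This is exactly assertion $(1)$.

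For $(2)$ I set $\Psi:=\left(\Phi_{\mid\acc{x=0}}\right)^{-1}$ and transport the time-form through $\Psi$, as done above. Writing $h:=H\circ\Psi$, which depends only on $v=y_{1}y_{2}$ and satisfies $h\left(v\right)=\lambda v+\underset{\abs v\rightarrow0}{o}\left(\abs v\right)$, the inverse function theorem (using $\lambda\neq0$) yields an analytic inverse $g$ with $g\left(0\right)=0$, so that the level set $\Phi_{\mid\acc{x=0}}\left(L_{a}\right)$ becomes $\acc{v=g\left(a\right)}$. Evaluating the period against the time-form $\tau_{X_{0}}=-\frac{\mbox{d}y_{1}}{y_{1}\left(\lambda+c\left(v\right)\right)}$ reduces the integral to the elementary residue $\oint_{\tilde{\gamma}_{a}}-\frac{\mbox{d}y_{1}}{y_{1}}=-2i\pi$, giving $T_{H}\left(a\right)=-\frac{1}{\lambda+c\left(g\left(a\right)\right)}$. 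Since $c$ and $g$ vanish at $0$ and $\lambda\neq0$, this is analytic near $0$ with $T_{H}\left(0\right)=-\frac{1}{\lambda}$.

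For $(3)$ I would invoke the second output of Corollary \ref{cor: normalisation analytic symplectique}, namely that $\Phi_{\mid\acc{x=0}}$ preserves $\mbox{d}y_{1}\wedge\mbox{d}y_{2}$. By the area-preservation together with the linear-algebra fact and the Hamiltonian lemma recorded in the proof of Proposition \ref{prop: trans ham implique asymp ham}, the determinant factor equals $1$, so $X_{0}$ is genuinely Hamiltonian for $h$ and $\ddd hv=\lambda+c\left(v\right)$. Combining this with the formula of $(2)$ evaluated at $a=h\left(v\right)$ yields the key identity $\ddd hv\left(v\right)\,T_{H}\left(h\left(v\right)\right)=-1$. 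Integrating the antiderivative $S_{H}$ normalized by $S_{H}\left(0\right)=0$ gives $S_{H}\left(h\left(v\right)\right)=-v$; hence $-S_{H}$ and $h$ are mutually inverse analytic germs, and differentiating the inverse relation recovers $\ddd hv=\lambda+c\left(v\right)$.

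The one genuinely substantive point --- and the crux I expect --- is the identification of the transcendentally defined period $T_{H}$ with the reciprocal of the formal invariant $\lambda+c$, that is the formula $T_{H}\left(a\right)=-1/\left(\lambda+c\left(g\left(a\right)\right)\right)$. Its validity rests on two properties that must be quoted with care: that the normalizing map is \emph{analytic} on $\acc{x=0}$, so that it acts on homology classes and pulls back time-forms, and that it is \emph{area-preserving}, so that the conjugate Hamiltonian satisfies $\ddd hv=\lambda+c$ exactly, with no spurious Jacobian factor. Once both are secured, the three claims follow with no further induction or estimate.
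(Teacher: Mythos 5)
Your proposal is correct and follows essentially the same route as the paper: normalize via Corollary \ref{cor: normalisation analytic symplectique} (through Proposition \ref{prop: trans ham implique asymp ham}) to get analyticity of $c$, transport the time-form by the analytic restriction $\Phi_{\mid\acc{x=0}}$ and evaluate the period as a residue to obtain $T_{H}\left(a\right)=-1/\left(\lambda+c\left(g\left(a\right)\right)\right)$, then use area-preservation to identify $\ddd hv=\lambda+c\left(v\right)$ and integrate $\ddd hv\left(v\right)\, T_{H}\left(h\left(v\right)\right)=-1$ to conclude $S_{H}\circ h=-\tx{id}$. The only cosmetic slip is the final sentence of your third paragraph, where "differentiating the inverse relation recovers $\ddd hv=\lambda+c\left(v\right)$" is redundant (that identity was already the input from area-preservation, not a consequence of the inversion); this does not affect the validity of the argument.
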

The conclusion is that if one is able to compute the period map of
the original Hamiltonian vector field on $\acc{x=0}$, then one can
compute the formal invariant $c$ in the normal form given in Theorem
\ref{thm: Th ham}, which is in fact even analytic in this case.
\begin{rem}
The Hamiltonian function $h\left(y_{1}y_{2}\right)=\lambda y_{1}y_{2}+\intop^{y_{1}y_{2}}c\left(v\right)\mbox{d}v$
is in fact the \emph{symplectic normal form} of the original Hamiltonian
function $H\left(y_{1},y_{2}\right)=\lambda y_{1}y_{2}+\underset{\mathbf{z}\rightarrow0}{o}\left(\left\Vert \mathbf{y}\right\Vert ^{2}\right)$,
as described in \cite{Chow} (section $2.7$).
\end{rem}

\subsection{Example: the case of the first Painlevé equation}

~

In the case of the first Painlevé equation, in appropriate coordinates,
we are working with the Hamiltonian 
\[
H\left(y_{1},y_{2}\right)=\frac{1}{5}\left(-2y_{2}^{2}+24\zeta y_{1}^{2}+8y_{1}^{3}\right)
\]
where $\zeta=\frac{i}{\sqrt{6}}$, according to equation (\ref{eq: P1 at infinity}).
These are not the system of coordinates which diagonalizes the linear
part of the vector field, but the value of the period does not changes
by symplectic changes of coordinates (those which preserve $\tx dy_{1}\wedge\tx dy_{2}$).
Now, if we fix $a\neq0$ with $\abs a$ small enough and look at the
level curve $\acc{H=a}$ near the origin in $\ww C^{2}$, we can compute
the associated period: 
\begin{eqnarray*}
T_{H}\left(a\right) & = & \frac{1}{2i\pi}\oint_{\gamma_{a}}\frac{5\mbox{d}y_{1}}{-4y_{2}}\\
 & = & {\displaystyle \frac{1}{2i\pi}\oint_{\gamma_{a,1}}\frac{5\mbox{d}y_{1}}{-4\sqrt{12\zeta y_{1}^{2}+4y_{1}^{3}-\frac{5}{2}a}}}\,\,\,\,,
\end{eqnarray*}
where $\gamma_{a,1}$ is the component of $\gamma_{a}$ with respect
to $\pp{y_{1}}$. 
\begin{rem}
The period $T_{H}\left(a\right)$ is one of the periods of the Weierstrass
function $\wp$ associated to the cubic 
\begin{eqnarray*}
H\left(y_{1},y_{2}\right) & = & a
\end{eqnarray*}
(see \emph{e.g. }\cite{Brjuno&Goryuchkina}, \cite{Abenda&Bazzani}).
To compute it we can chose for instance 
\begin{eqnarray*}
\gamma_{a,1}:\cro{0,2\pi} & \longrightarrow & \ww C\\
t & \longmapsto & \rho_{a}e^{it}
\end{eqnarray*}
where $\rho_{a}>0$ is such that $\abs{12\zeta y_{1}^{2}+4y_{1}^{3}}>\frac{5\abs a}{2}$,
for all $y_{1}=\gamma_{a,1}\left(t\right)$, $t\in\cro{0,2\pi}$.
Now we write:
\begin{eqnarray*}
\frac{1}{-4\sqrt{12\zeta y_{1}^{2}+4y_{1}^{3}-\frac{5}{2}a}} & = & \frac{\sqrt{2}}{-4\sqrt{24\zeta y_{1}^{2}+8y_{1}^{3}}}\frac{1}{\sqrt{1-\frac{5a}{24\zeta y_{1}^{2}+8y_{1}^{3}}}}\\
 & = & \frac{\sqrt{2}}{-4\sqrt{24\zeta y_{1}^{2}+8y_{1}^{3}}}\sum_{k\geq0}\left(\begin{array}{c}
\frac{-1}{2}\\
k
\end{array}\right)\left(\frac{5a}{24\zeta y_{1}^{2}+8y_{1}^{3}}\right)^{k}\,\,\,.
\end{eqnarray*}
As we have normal convergence, we can swap the order of summation
and integration: 
\begin{eqnarray*}
T_{H}\left(a\right) & = & \frac{5}{2i\pi}\oint_{\gamma_{a,1}}\frac{\sqrt{2}}{-4\sqrt{24\zeta y_{1}^{2}+8y_{1}^{3}}}\sum_{k\geq0}\left(\begin{array}{c}
\frac{-1}{2}\\
k
\end{array}\right)\left(\frac{5a}{24\zeta y_{1}^{2}+8y_{1}^{3}}\right)^{k}\mbox{d}y_{1}\\
 & = & \frac{-5\sqrt{2}}{8i\pi}\sum_{k\geq0}\left(\begin{array}{c}
\frac{-1}{2}\\
k
\end{array}\right)5^{k}\left(\oint_{\gamma_{a,1}}\left(24\zeta y_{1}^{2}+8y_{1}^{3}\right)^{-\left(k+\frac{1}{2}\right)}\mbox{d}y_{1}\right)a^{k}\,\,\,\,.
\end{eqnarray*}
Notice that $y_{1}\mapsto\left(24\zeta y_{1}^{2}+8y_{1}^{3}\right)^{\left(k+\frac{1}{2}\right)}$
is in fact analytic in a neighborhood of the origin, with a zero of
order $2k+1$. Hence we can compute the integral above using the residue
theorem. As we have
\end{rem}
\begin{eqnarray*}
\left(24\zeta y_{1}^{2}+8y_{1}^{3}\right)^{-\left(k+\frac{1}{2}\right)} & = & \left(24\zeta\right)^{-\left(k+\frac{1}{2}\right)}y_{1}^{-\left(2k+1\right)}\sum_{j\geq0}\left(\begin{array}{c}
-\left(k+\frac{1}{2}\right)\\
j
\end{array}\right)\left(\frac{8}{24\zeta}\right)^{j}y_{1}^{j}\qquad,
\end{eqnarray*}
we see that the associated residue at $0$ is equal to $8^{2k}{\displaystyle \left(24\zeta\right)^{-\left(3k+\frac{1}{2}\right)}}\left(\begin{array}{c}
-\left(k+\frac{1}{2}\right)\\
2k
\end{array}\right)$, so that 
\[
T_{H}\left(a\right)={\displaystyle \sum_{k\geq0}}T_{H,k}a^{k}
\]
 with: 
\[
T_{H,k}=-5^{k+1}\left(\begin{array}{c}
\frac{-1}{2}\\
k
\end{array}\right)\left(\begin{array}{c}
-\left(k+\frac{1}{2}\right)\\
2k
\end{array}\right)8^{-\left(k+1\right)}{\displaystyle \left(3\zeta\right)^{-\left(3k+\frac{1}{2}\right)}}\,\,\,\,.
\]
Using notations of Proposition \ref{prop: periode} we have: 
\[
S_{H}\left(a\right)={\displaystyle \sum_{k\geq0}}\frac{T_{H,k}}{k+1}a^{k+1}={\displaystyle \sum_{k\geq1}}S_{H,k}a^{k}\,\,\,\:
\]
with $S_{H,k}=\frac{T_{H,k-1}}{k}$ for $k\geq1$. Since $S_{H}\left(0\right)=0$
and $\mbox{\ensuremath{\ddd{S_{H}}a\left(0\right)}}=T_{H}\left(0\right)\neq0$,
the mapping $\left(-S_{H}\right)$ is invertible for the composition
and we can compute recursively its inverse (denoted by $h$): 
\[
h\left(v\right)=\sum_{k\geq1}h_{k}v^{k}\,\,\,\,.
\]
For all $k\geq1$, the coefficient $h_{k}$ is uniquely determined
by the coefficients $S_{H,j},\, j\leq k$. Finally, we have 
\[
\lambda+{\displaystyle c\left(v\right)=\ddd hv\left(v\right)=\sum_{k\geq0}\left(k+1\right)h_{k+1}v^{k}=\lambda+\sum_{k\geq1}c_{k}v^{k}}\,\,.
\]

As a conclusion, the jet of order $k$ of $T_{H}$ gives us the jet
of order $k$ of $c$. After computations performed with Maple, we
obtain for instance: 
\begin{eqnarray*}
\lambda & = & \frac{8\sqrt{3\zeta}}{5}=\frac{4\cdot2^{\frac{3}{4}}\cdot3^{\frac{1}{4}}}{5}e^{\frac{i\pi}{4}}\\
c_{1} & = & 3\\
c_{2} & = & 9+\frac{167\cdot2^{\frac{1}{4}}\cdot3^{\frac{3}{4}}}{96}e^{\frac{3i\pi}{4}}\\
c_{3} & = & 16+\frac{31837\sqrt{6}}{6912}i+\frac{5}{2}\cdot2^{\frac{1}{4}}\cdot3^{\frac{1}{4}}\cdot e^{\frac{3i\pi}{4}}\,\,\,\,.
\end{eqnarray*}
One can in fact compute any finite jet of $c$.
\begin{rem}
Similar computations can be performed for any Hamiltonian of the form
$H\left(y_{1},y_{2}\right)=\beta y_{2}^{2}+\alpha y_{1}^{2}+f\left(y_{1}\right)$,
where $\alpha,\beta\in\ww C\backslash\acc 0$ and $f\in\ww C\acc{y_{1}}$. 
\end{rem}

\section{Acknowledgement}

I would like to express my gratitude to my supervisors Daniel Panazzolo
and Loïc Teyssier for their useful comments and suggestions. The many
discussions we had played an important role in the improvement of
this paper.

\section*{References}

\bibliographystyle{plain}
\addcontentsline{toc}{section}{\refname}\bibliography{references}

\end{document}